\newcommand{\numberset}{\mathbb}
\newcommand{\Z}{\numberset{Z}}
\newcommand{\ca}{\mathcal}
\newtheorem{defi}{Definition}[section]
\newtheorem{theo}[defi]{Theorem}
\newtheorem{lem}[defi]{Lemma}
\newtheorem{con}[defi]{Conjecture}
\newtheorem{obs}[defi]{Observation}
\newtheorem{cor}[defi]{Corollary}
\newtheorem{prob}[defi]{Problem}
\newcounter{claimcount}
\newenvironment{claim}{\refstepcounter{claimcount}\textbf{Claim \arabic{claimcount}.}}{}
\theoremstyle{remark}
\newcommand{\ENDproof}{\hfill $\blacksquare$\medskip\par}
\title{Sets of $r$-graphs that color all $r$-graphs}
\author{Yulai Ma$^1$\thanks{Supported by Sino-German (CSC-DAAD) Postdoc Scholarship Program 2021 (57575640)}, Davide Mattiolo$^2$\thanks{Supported by a Postdoctoral Fellowship of the Research Foundation Flanders (FWO), project number 1268323N}, Eckhard Steffen$^1$, Isaak H.~Wolf$^1$ \thanks{Funded by Deutsche Forschungsgemeinschaft (DFG) - 445863039} \\
\footnotesize
		$^1$ Department of Mathematics, Paderborn University, Warburger Str.\ 100, 33098 Paderborn,
		Germany.
	\\
	\footnotesize
	$^2$ Department of Computer Science, KU Leuven Kulak, 8500 Kortrijk, Belgium.
\\ \footnotesize yulai.ma@upb.de, davide.mattiolo@kuleuven.be, es@upb.de, isaak.wolf@upb.de}
\date{}
\begin{document}

\maketitle

\begin{abstract}
An $r$-regular graph is an $r$-graph, if every odd set of vertices is connected to its complement by at least $r$ edges. Let $G$ and $H$ be $r$-graphs. An \emph{$H$-coloring} of $G$ is a mapping 
$f\colon E(G) \to E(H)$ such that each $r$ adjacent edges of $G$ are mapped to $r$ adjacent edges of $H$. For every $r\geq 3$, let $\ca H_r$ be an inclusion-wise minimal set of connected $r$-graphs, such that for every connected $r$-graph $G$ there is an $H \in \ca H_r$ which colors $G$.

We show that $\ca H_r$ is unique and characterize $\ca H_r$ by showing that 
$G \in \ca H_r$ if and only if the only connected $r$-graph coloring $G$ is $G$ itself. 

The Petersen Coloring Conjecture states that the Petersen graph $P$ colors every bridgeless cubic graph. 
We show  that if true, this is a very exclusive situation. 
Indeed, either $\ca H_3 = \{P\}$ or $\ca H_3$ is an infinite set and if
$r \geq 4$, then $\ca H_r$ is an infinite set. Similar results hold for the restriction on simple $r$-graphs.    

By definition, $r$-graphs of class $1$ (i.e.\ those having edge-chromatic number equal to $r$) can be colored with any $r$-graph.
Hence, our study will focus on those $r$-graphs whose edge-chromatic number is bigger than $r$, also called $r$-graphs of class $2$. We determine the set of smallest $r$-graphs of class 2 and show that it is a subset of $\ca H_r$.


\end{abstract}

{\bf Keywords:} perfect matchings, regular graphs, factors, $r$-graphs, edge-coloring, class 2 graphs, Petersen Coloring Conjecture, Berge-Fulkerson Conjecture.

\section{Introduction}
All graphs considered in this paper are finite and may have parallel edges 
but no loops.
The vertex set of a graph $G$ is denoted by $V(G)$ and its edge set by $E(G)$. 
A graph is \emph{$r$-regular} if every vertex has degree $r$. An
$r$-regular graph is an \emph{$r$-graph}, if $|\partial_G(X)| \geq r$ for every $X \subseteq V(G)$ of odd cardinality, where $\partial_G(X)$ denotes the set of edges 
that have precisely one vertex in $X$. 

Let $G$ be a graph and $S$ be a set. An \emph{edge-coloring} of $G$ is a mapping $f\colon E(G)\to S$. It is a 
\emph{$k$-edge-coloring} if $|S|=k$, and it is \emph{proper} 
if $f(e) \not = f(e')$ for any two adjacent edges $e$ and $e'$. 
The smallest integer $k$ for which $G$ admits a proper $k$-edge-coloring 
is the \emph{edge-chromatic number} of $G$, which is denoted by $\chi'(G)$. A \emph{matching} is a set $M\subseteq E(G)$ such that no two edges of $M$ are adjacent. Moreover, $M$ is said to be \emph{perfect} if every vertex of $G$ is incident with an edge of $M$.

If $\chi'(G)$ equals the maximum degree of $G$, then $G$ is said to be \emph{class $1$}; otherwise $G$ is \emph{class $2$}. 
If $\chi'(G)=r$, then $r$ is
the minimum number such that $E(G)$ decomposes into $r$ matchings,
which are perfect matchings in case of $r$-regular graphs. For $r \geq 1$,
let $\ca T_r$ be the set of the smallest $r$-graphs of class 2. 
For example, the only element of $\ca T_3$ is the Petersen graph, which is
denoted by $P$ throughout this paper. 

The generalized Berge-Fulkerson Conjecture \cite{seymour1979multi} states that every $r$-graph
has $2r$ perfect matchings such that every edge is in precisely two of them. For $r=3$ the conjecture  was attributed to Berge and Fulkerson \cite{fulkerson1971blocking},
who put it into print (cf.~\cite{seymour1979multi}).
As a unifying approach to study some hard conjectures 
on cubic graphs, Jaeger \cite{jaeger1988nowhere} introduced colorings with
edges of another graph. To be precise, let $G$ and $H$ be graphs. An \emph{$H$-coloring} of $G$ is a mapping $f\colon E(G) \to E(H)$ such that
\begin{itemize}
	\item if $e_1,e_2 \in E(G)$ are adjacent, then $f(e_1) \neq f(e_2)$,
	\item for every $v \in V(G)$ there exists a vertex $u \in V(H)$ with $f(\partial_G(v))=\partial_H(u)$.
\end{itemize}

If such a mapping exists, then we write $H \prec G$ and say $H$ \emph{colors} $G$.
A set $\ca A$ of connected $r$-graphs such that for every connected $r$-graph $G$ 
there is an element $H \in \ca A$ which colors $G$ is said to be $r$-\emph{complete}.
For every $r\geq 3$, let $\ca H_r$ be an inclusion-wise minimal $r$-complete set.  

For $r=3$, Jaeger \cite{jaeger1988nowhere} conjectured that the Petersen graph 
colors every bridgeless cubic graph. If true, this conjecture would have far reaching 
consequences. For instance, it would imply that the Berge-Fulkerson Conjecture
and the 5-Cycle Double Cover Conjecture (see \cite{C.-Q._Zhang_book}) are 
also true.   
The Petersen Coloring Conjecture is a starting point for research in several directions. 
Different aspects of it are studied and partial results are proved, see for instance \cite{DeVos_etal_2007, Haglund_Steffen_2014, Jaeger_5_edge_coloring, Jin_partial_2021, Giuseppe_Vahan_2020, Riste_etal_2020, Robert_2017}. 

Analogously to the case $r=3$, if all elements of $\ca H_r$ would satisfy the generalized
Berge-Fulkerson Conjecture, then every $r$-graph would satisfy it.
Mazzuoccolo et al.~\cite{MTZ_r_graphs} asked 
whether there exists a connected $r$-graph $H$ such that $H \prec G$ for every (simple)
$r$-graph $G$, for all $r \geq 3$. 
We show that $\ca H_r$ is unique and that it is an infinite set when $r \geq 4$. Furthermore,
if $r=3$, then either $\ca H_3 = \{P\}$ (if the Petersen Coloring Conjecture is true) 
or $\ca H_3$ is an infinite set. More precisely, in Section \ref{Sec: H-coloring} we characterize $\ca H_r$ and provide constructions for infinite subsets of $\ca H_r$. Similar results are proved 
for simple $r$-graphs.

By definition, any $r$-graph $ G $ of class $1$ can be colored with any $r$-graph $ H $. Indeed, let $ M_1,\ldots, M_r$ be $ r $ pairwise disjoint perfect matchings of $ G $ and $ v $ a vertex of $ H $ with $ \partial_{H}(v)=\{e_1,\ldots,e_r\}$.
Every edge of $ M_i $ of  $ G $ can be mapped to $ e_i $ in $ H $.
Hence, the aforementioned questions and conjectures reduce to $r$-graphs of class $2$. In Section~\ref{Sec: smallest r-graphs} we determine the
set $\ca T_r$ of the smallest $r$-graphs of class 2
and prove that $|\ca T_r| \geq p'(r-3,6)$, where $p'(r-3,6)$ is the number 
of partitions of $r-3$ into at most $6$ parts. Furthermore, we show that 
if $r \geq 4$, then $\ca T_r$ is a proper subset of $\ca H_r$.   
 
The Petersen Coloring Conjecture has also been studied in the context of
quasi-orders on the set of graphs, see \cite{DeVos_etal_2007, Robert_2017}. In Section \ref{Sec: final remarks} we briefly put our 
results in this context. We conclude the paper with some open questions.

\subsection{Definitions and basic results}

Let $ G $ be a graph.
For any subset $ X $ of $ V(G) $, we  use $ G-X $ to denote the graph obtained from $ G $ by deleting all vertices of $ X $ and all incident edges. Similarly, for $ F \subseteq E(G) $, denote by $ G-F $ the graph obtained by deleting all edges of $F$ from $ G $.  In particular, we simply write 
$ G-x $ and $ G-e $ for $ G-X $ and $ G-F $, respectively, when $ X=\{x\} $ and $ F=\{e\} $.  The subgraph of $ G $ induced by the vertex set $ X $ is denoted by $ G[X] $. Moreover, the graph obtained from $ G $ by identifying all vertices of $ X $ and deleting all resulting loops is denoted  by $ G/X $;  we denote the new vertex by $w_X$.
Let $Y$ be a subset of $ V(G) $ with $ X\cap Y=\emptyset $. We use $ [X,Y]_G $ to denote the set of all edges of $ G $ with one vertex in $ X $ and the other one  in $Y$. 
Furthermore, if $ Y=X^c=V(G)\setminus X $ and $ [X,Y]_G $ is nonempty, 
then we call it an \emph{edge-cut} of $ G $ and denote it by $\partial_G(X)$. 
If  $ X $ or $ Y $ consists of one vertex, we skip the set-brackets notation.
In addition,  $|\partial_{G}(x)| $ is called the \emph{degree} of $ x \in V(G)$ and it is denoted by $d_G(x)$.
If $G$ is an $r$-graph, then $\partial_G(X)$
is \emph{tight} if $|X|$ is odd and $|\partial_G(X)|=r$. A tight edge-cut is \emph{trivial} if   $X$ or $X^c$ consists of a single vertex. Moreover, for $v \in V(G)$ we denote by $N_G(v)$ the set of neighbors of $v$.

A \emph{$ 1 $-factor}  of a graph $G $ is a spanning $ 1 $-regular subgraph of  $ G$, and its edge set is a perfect matching.
A connected $ 2 $-regular graph is called a  \emph{circuit}. A circuit of length $k$ is
called a \emph{$ k $-circuit} and it is denoted by $C_k$. 

For two graphs $ G $ and $ H $, if there are two bijections $ \theta: V(G) \to V(H)$ and $ \phi:E(G)\to E(H)  $ such that  $ e=uv\in E(G) $ if and only if $ \phi (e)=\theta(u)\theta(v)\in E(H)  $, then we say that $ G $ and $ H $ are {\em isomorphic}, denoted by $ G\cong H $, and call the pair of mappings $ (\theta,\phi) $ an  {\em isomorphism} between $ G $ and $ H $. In particular, an {\em automorphism} of a graph is an isomorphism of the graph to itself.

Let   $ H_1, \ldots, H_t $ be a sequence of graphs such that $V( H_i)\subseteq V(H_1) $ for each $ i\in\{2,\ldots,t\} $.  Denote by $H_1+E(H_2)+\ldots+E(H_t)$ the graph obtained from $H_1$ by adding a copy of every edge of $H_i$ for every $i\in\{2,\dots,t\}$. 
Let $\ca M$ be a finite multiset of perfect matchings of the Petersen graph $P$. 
The graph $P+\sum_{M\in\ca M}M$ is denoted by $P^{\ca M}$.

\begin{lem} [\cite{Grunewald_Steffen_1999}] \label{lem: P^M class 2}
For every finite multiset $\ca M$ of perfect matchings of the Petersen graph $P$,
the graph $P^{\ca M}$ is class 2.
\end{lem}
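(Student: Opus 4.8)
The plan is to argue by contradiction: a hypothetical proper $(3+k)$-edge-coloring of $P^{\ca M}$, where $k=|\ca M|$, is turned into an integer system of equations over the perfect matchings of $P$ that violates a parity constraint. Note first that $P^{\ca M}$ is $(3+k)$-regular, so $\chi'(P^{\ca M})\ge 3+k$, and I want to exclude equality. Suppose then that $E(P^{\ca M})$ decomposes into perfect matchings $N_1,\dots,N_{3+k}$. Each edge of $P^{\ca M}$ projects to the unique edge of $P$ it is a copy of, and since parallel edges are adjacent, projecting any $N_i$ yields a genuine perfect matching $\bar N_i$ of $P$. Writing $j_e$ for the number of matchings of $\ca M$ containing $e\in E(P)$, the edge $e$ has exactly $1+j_e$ parallel copies in $P^{\ca M}$, distributed among $1+j_e$ distinct classes; hence $e$ lies in exactly $1+j_e$ of the projections $\bar N_1,\dots,\bar N_{3+k}$.

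Next I would use the classical structure of $P$: it has exactly six perfect matchings $Q_1,\dots,Q_6$, every edge lies in exactly two of them, and any two of them meet in exactly one edge (so no edge lies in three). Letting $a_t$ be the number of indices $i$ with $\bar N_i=Q_t$, the count above reads, for the edge $e$ lying in $Q_s$ and $Q_{s'}$,
$$a_s+a_{s'}=1+j_e,$$
a system of equations indexed by the pairs $\{s,s'\}$ in nonnegative integers $a_1,\dots,a_6$.

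The heart of the argument is a parity obstruction. Fix three matchings, say $Q_1,Q_2,Q_3$, and let $e_{12},e_{23},e_{13}$ be their (pairwise distinct) intersection edges. Summing the three corresponding equations gives
$$2(a_1+a_2+a_3)=3+\bigl(j_{e_{12}}+j_{e_{23}}+j_{e_{13}}\bigr),$$
forcing $j_{e_{12}}+j_{e_{23}}+j_{e_{13}}$ to be odd. But a perfect matching $Q_t$ contains $e_{ss'}$ precisely when $t\in\{s,s'\}$, so $Q_1,Q_2,Q_3$ each contain exactly two of these edges and $Q_4,Q_5,Q_6$ none; every perfect matching of $P$ thus meets $\{e_{12},e_{23},e_{13}\}$ in an even number of edges. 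Consequently each matching of $\ca M$ contributes an even amount and $j_{e_{12}}+j_{e_{23}}+j_{e_{13}}$ is even, a contradiction. Hence $\chi'(P^{\ca M})>3+k$ and $P^{\ca M}$ is class $2$.

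I expect the projection step and the multiplicity bookkeeping to be routine. The real obstacle is that the purely fractional version of the system is solvable: since the six matchings form a double cover of $E(P)$, one third of the all-ones edge vector lies in their convex hull, so no weighting (linear-functional) argument can separate the target from the matching polytope. One is therefore forced to extract a genuine integrality obstruction, and the delicate point is to locate the right parity witness — a triangle of pairwise-intersecting perfect matchings whose three common edges are met evenly by every perfect matching.
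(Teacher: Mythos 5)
The paper does not actually prove this lemma; it imports it verbatim from \cite{Grunewald_Steffen_1999}, so there is no in-paper proof to match against. Your argument is correct and self-contained. The projection step is sound (parallel edges are adjacent, so each color class of a hypothetical proper $(3+k)$-edge-coloring projects to a genuine perfect matching of $P$, and the $1+j_e$ copies of an edge $e$ land in $1+j_e$ distinct classes), the facts you invoke about $P$ are the standard ones (exactly six perfect matchings, each edge in exactly two, any two matchings meeting in exactly one edge), and the parity obstruction is clean: summing $a_s+a_{s'}=1+j_e$ over the three pairwise intersection edges of $Q_1,Q_2,Q_3$ forces $j_{e_{12}}+j_{e_{23}}+j_{e_{13}}$ to be odd, while every perfect matching of $P$ meets $\{e_{12},e_{23},e_{13}\}$ in $0$ or $2$ edges, so each member of $\ca M$ contributes evenly. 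This also correctly handles $\ca M=\emptyset$, recovering that $P$ itself is class 2. Your closing remark is apt and worth keeping in mind: since the six perfect matchings double-cover $E(P)$, no fractional or linear-programming relaxation can detect the obstruction, so some integrality/parity witness of the kind you found is genuinely necessary; this is the same phenomenon that underlies the published proofs in this area, which likewise rest on parity counts over the six perfect matchings of $P$.
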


The following observation  will  frequently   be used without reference.

\begin{obs}\label{Observation-same-parity}
Let $r\geq 3$, let $G$ be an $r$-graph and let $X \subseteq V(G)$. If $\vert X \vert$ is even, then $\vert \partial_G(X) \vert$ is even. If $\vert X \vert$ is odd, then $\vert \partial_G(X) \vert$ has the same parity as $r$.
\end{obs}

One major fact that we use in this paper is that every $r$-graph can be
decomposed into a $k$-graph which is class 1 and an $(r-k)$-regular graph, for a suitable $k \in \{1, \dots ,r\}$. 
For every $r$-graph $G$ let $\pi(G)$ be the largest integer $t$ such that $G$ has $t$ pairwise disjoint perfect matchings.
Let $r\ge3$ and $k\in \{1,\dots, r\}$ be integers. Let 
$\ca G(r,k)=\{G\colon G$ is an $r$-graph with $\pi(G)=k \}$. 
Note that $\ca G(r,r-1) = \emptyset$, since
every $r$-graph with $r-1$ pairwise disjoint perfect matchings is a class 1 graph
and thus, it has $r$ pairwise disjoint perfect matchings. If $ k \leq r-2$, then the elements
of $\ca G(r,k)$ are class 2 graphs and $\ca G(r,i) \cap \ca G(r,j) = \emptyset$, if $1 \leq i \not = j \leq r-2$. We are interested in 
 the subset of $\ca G(r,k)$ consisting of all such graphs with the smallest order.
 This set is denoted by $\ca T(r,k)$.
 By definition, $\ca T_r \subseteq \bigcup_{i=1}^{r-2} \ca T(r,i)$.

\section{Smallest $r$-graphs of class $2$} \label{Sec: smallest r-graphs}

\subsection{Determination of $\ca T_r$}

The following theorem extends Lemma \ref{lem: P^M class 2} and characterizes the perfect matchings $M$ on $V(P)$ such that $P+M$ is a class $2$ graph.

\begin{theo}\label{theo:P+matching}
	Let $P$ be the Petersen graph and $H$ be a 1-regular graph on 
	$V(P)$ with edge set $M$. Then $P+M$ is class $2$ if and only if $M\subseteq E(P)$.
\end{theo}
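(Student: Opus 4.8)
The plan is to prove the two directions separately, handling the easy implication first and reserving the construction of an explicit $4$-edge-coloring for the converse. For the ``if'' direction, suppose $M\subseteq E(P)$. Since $M$ is the edge set of a $1$-regular spanning subgraph contained in $P$, it is a perfect matching of $P$, so $P+M=P^{\ca M}$ with $\ca M=\{M\}$, and Lemma~\ref{lem: P^M class 2} immediately gives that $P+M$ is class $2$. All the work therefore goes into the ``only if'' direction, which I would prove in contrapositive form: if $M\not\subseteq E(P)$, then $P+M$ is class $1$, i.e.\ (being $4$-regular) it admits a proper $4$-edge-coloring.

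The key reduction I would use is a reformulation in terms of colorings of $P$ alone. Given a proper $4$-edge-coloring $c\colon E(P)\to\{1,2,3,4\}$, for each vertex $v$ let $\mu_c(v)$ be the unique color missing among the three $P$-edges at $v$. Restricting a proper $4$-edge-coloring of the $4$-regular graph $P+M$ to $E(P)$ yields such a $c$, and since all four colors appear at every (degree-$4$) vertex, the $M$-edge at $v$ must receive the color $\mu_c(v)$; conversely, any $c$ can be extended over $M$ exactly when the two endpoints of every $M$-edge miss the same color. This gives the equivalence: $P+M$ is class $1$ if and only if $P$ has a proper $4$-edge-coloring $c$ with $\mu_c(u)=\mu_c(v)$ for every $uv\in M$. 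Equivalently, writing $D_i=\mu_c^{-1}(i)$ for the (even-sized) deficiency classes, we need a coloring whose deficiency partition is refined by $M$, i.e.\ every $M$-edge lies inside one class.

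To constrain the possible deficiency partitions I would invoke $\pi(P)=1$: no two of the six perfect matchings of $P$ are disjoint, so at most one color class of $c$ can be a perfect matching. Since no color class may be empty ($P$ being class $2$), the color-class sizes are forced into one of the patterns $(5,4,4,2)$, $(5,4,3,3)$, $(4,4,4,3)$, giving deficiency-class size patterns $(0,2,2,6)$, $(0,2,4,4)$, $(2,2,2,4)$. Thus a suitable $c$ exists precisely when the five pairs of $M$ can be grouped into parts realizing one of these patterns as an actual deficiency partition of $P$. Using that $P$ is distance-transitive, I would normalize a fixed chord $uv\in M$ (with unique common neighbour $w$) and then, for each combinatorial type of the matching induced by $M$ on the remaining eight vertices, produce a $4$-edge-coloring of $P$ whose deficiency partition is refined by $M$; the symmetry of $P$ keeps the number of genuinely distinct cases finite.

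The main obstacle is exactly this last verification: one must show that every perfect matching of $V(P)$ containing a chord is refined by the deficiency partition of some proper $4$-edge-coloring, whereas the six perfect matchings of $P$ are refined by none (the latter being forced by Lemma~\ref{lem: P^M class 2} through the equivalence above, since otherwise $P^{\ca M}$ would be class $1$). Establishing coverage of all chord-containing $M$ is a finite but delicate combinatorial argument, and the presence of a chord is essential: it is the single feature distinguishing the matchings handled here from the perfect matchings of $P$, for which the construction provably cannot succeed.
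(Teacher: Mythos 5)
Your reduction is sound: for the $4$-regular graph $P+M$, being class $1$ is indeed equivalent to the existence of a proper $4$-edge-coloring $c$ of $P$ with $\mu_c(u)=\mu_c(v)$ for every $uv\in M$, and your enumeration of the possible color-class size patterns $(5,4,4,2)$, $(5,4,3,3)$, $(4,4,4,3)$ from $\pi(P)=1$ is correct. But the proof stops exactly where the theorem begins. The statement ``a suitable $c$ exists precisely when the five pairs of $M$ can be grouped into parts realizing one of these patterns as an actual deficiency partition of $P$'' is a tautological restatement of the target, not a criterion: nothing in the proposal characterizes which partitions of $V(P)$ actually arise as deficiency partitions, and the decisive claim --- that \emph{every} perfect matching of $V(P)$ containing at least one chord is refined by some deficiency partition --- is explicitly deferred as ``a finite but delicate combinatorial argument.'' That argument is the entire content of the hard direction. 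It is also not a small check: of the $945$ perfect matchings of the complete graph on $V(P)$, $939$ contain a chord, and even modulo the automorphism group of order $120$ this leaves a substantial number of orbits, none of which you treat. Distance-transitivity lets you normalize one chord, but the matching on the remaining eight vertices still branches into many genuinely distinct configurations, and you exhibit no coloring for any of them.

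For contrast, the paper's proof avoids this enumeration entirely by a decomposition trick. Given a chord $e\in M\setminus E(P)$, it writes $P$ as two $5$-circuits plus a $1$-factor $H'$ so that $e$ is a chord of one of the $5$-circuits, and considers the cubic graph $H_2=P+M-E(H')$. If $H_2$ is $2$-edge-connected it is class $1$, because it has order $10$, contains a $4$-circuit, and hence is not the Petersen graph --- the unique class $2$ cubic $3$-graph on at most $10$ vertices; adding back the perfect matching $E(H')$ then shows $P+M$ is class $1$. The remaining situation $\vert\partial_{H_2}(V(C_5^1))\vert=1$ splits into just two cases, each resolved by swapping two edges of $M$ for two edges of the $5$-circuit to produce another $2$-edge-connected cubic graph with a short circuit. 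If you want to salvage your approach, you must either carry out the full case analysis of deficiency partitions or find a structural argument replacing it; as written, the proof is incomplete.
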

\begin{proof} Lemma  \ref{lem: P^M class 2} has shown that $M\subseteq E(P)$ is a sufficient condition for $P+M$  to be class $ 2 $. We establish its necessity by way of contradiction. Suppose that there exists an edge  $ e \in M\setminus E(P) $. Let $ H_1=P+M$.
	Since any two  vertices of the Petersen graph are in a $ 5 $-circuit, the subgraph $ P $ of $ H_1 $ can be decomposed into two $ 5 $-circuits, $ C^1_5 $ and $C^2_5 $, and a 1-factor $H'  $ such that $e$ is a chord of $ C^1_5$ in $ H_1 $. Without loss of generality,  we assume $ C^1_5=u_1u_2u_3u_4u_5u_1$ with $e=u_2u_5$, as shown in Figure \ref{fig: C5+e}. 
	Let $H_2=H_1-E(H')=P+M-E(H')$. Note that $ H_2 $ is $ 3 $-regular and contains $C^1_5$ and $C^2_5$.
	If $| \partial_ {H_2}(V(C^1_5))|\neq1$, then $ H_2 $ is $ 2 $-edge-connected. This implies that $ H_2 $  is class 1 since it is not isomorphic to $P$, as it contains a $ 4 $-circuit $u_2u_3u_4u_5u_2 $. So, $ H_1= H_2+E(H')$ is also class 1, a contradiction.  Therefore, we may assume $| \partial_ {H_2}(V(C^1_5))|=1$ and set $ \partial_ {H_2}(V(C^1_5))=\{e'\} $.  The remaining proof is split into two cases. First, if $ e' $ is incident with $u_1$,
	then $M  $ contains an edge incident with $u_3$ and $ u_4 $.   
	Thus, $H_3=H_1-M_1$ contains a $ 3 $-circuit $u_1u_2u_5u_1$, a $ 2 $-circuit $u_3u_4u_3$ and a $ 5 $-circuit $ C^2_5 $, where $M_1=(M\setminus \{u_2u_5,u_3u_4\})\cup\{u_2u_3,u_4u_5\}$. Moreover, there are five edges between $ V(C^1_5) $ and $ V(C^2_5) $ in $ H_3 $, which implies that $ H_3 $ is $ 2 $-edge-connected. 
	Thus, $ H_3 $ is class 1 and so is $ H_1 $, a contradiction.  Second, 
	if $ e' $ is incident with $ u_3 $ or $ u_4 $, then, without loss of generality, we assume that $ e' $ is incident with $ u_3 $, and so  $M$ contains the edge $u_1u_4$.  Let $M_2=(M\setminus\{u_1u_4,u_2u_5\})\cup\{u_1u_2,u_4u_5\}$ and let $H_4=H_1-M_2$. 
	
	There are two adjacent vertices $v_1$ and $v_4$ in $P$ such that $v_i \in N_{P}(u_i)\setminus V(C^1_5)$ for each $i\in\{1,4\}$. Then $H_4$ contains a $ 4 $-circuit $u_1u_4v_4v_1u_1$. Moreover, $ H_4 $ is $ 2 $-edge-connected since there are five edges between $ V(C^1_5) $ and $ V(C^2_5) $.  This implies that $H_4$  is class 1 and therefore, $H_1$ is also class 1, a contradiction. 
\end{proof}

\begin{figure}[htbp]
	\centering
	\scalebox{1}{
\begingroup%
  \makeatletter%
  \providecommand\color[2][]{%
    \errmessage{(Inkscape) Color is used for the text in Inkscape, but the package 'color.sty' is not loaded}%
    \renewcommand\color[2][]{}%
  }%
  \providecommand\transparent[1]{%
    \errmessage{(Inkscape) Transparency is used (non-zero) for the text in Inkscape, but the package 'transparent.sty' is not loaded}%
    \renewcommand\transparent[1]{}%
  }%
  \providecommand\rotatebox[2]{#2}%
  \newcommand*\fsize{\dimexpr\f@size pt\relax}%
  \newcommand*\lineheight[1]{\fontsize{\fsize}{#1\fsize}\selectfont}%
  \ifx\svgwidth\undefined%
    \setlength{\unitlength}{139.29602089bp}%
    \ifx\svgscale\undefined%
      \relax%
    \else%
      \setlength{\unitlength}{\unitlength * \real{\svgscale}}%
    \fi%
  \else%
    \setlength{\unitlength}{\svgwidth}%
  \fi%
  \global\let\svgwidth\undefined%
  \global\let\svgscale\undefined%
  \makeatother%
  \begin{picture}(1,0.78588845)%
    \lineheight{1}%
    \setlength\tabcolsep{0pt}%
    \put(0,0){\includegraphics[width=\unitlength,page=1]{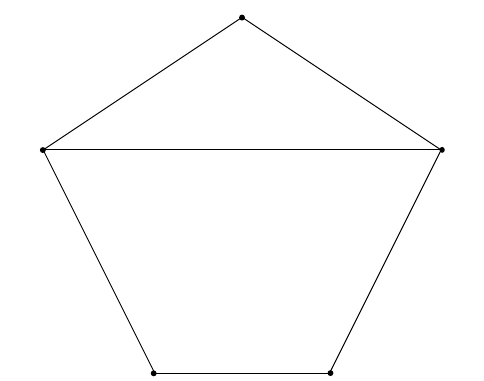}}%
    \put(0.48985459,0.49265317){\makebox(0,0)[lt]{\lineheight{1.25}\smash{\begin{tabular}[t]{l}$e$\end{tabular}}}}%
    \put(-0.00222725,0.46814327){\makebox(0,0)[lt]{\lineheight{1.25}\smash{\begin{tabular}[t]{l}$u_2$\end{tabular}}}}%
    \put(0.47706804,0.76550256){\makebox(0,0)[lt]{\lineheight{1.25}\smash{\begin{tabular}[t]{l}$u_1$\end{tabular}}}}%
    \put(0.92059197,0.47064011){\makebox(0,0)[lt]{\lineheight{1.25}\smash{\begin{tabular}[t]{l}$u_5$\end{tabular}}}}%
    \put(0.21913928,0.00632801){\makebox(0,0)[lt]{\lineheight{1.25}\smash{\begin{tabular}[t]{l}$u_3$\end{tabular}}}}%
    \put(0.6998391,0.00773603){\makebox(0,0)[lt]{\lineheight{1.25}\smash{\begin{tabular}[t]{l}$u_4$\end{tabular}}}}%
  \end{picture}%
\endgroup%
} 
	\caption{The $5$-circuit $C^1_5$ with the edge $ e $.}
	\label{fig: C5+e}
\end{figure}


\begin{theo}\label{Theorem-cong-P+M_V2}
	For all $r\ge 3$,
	$\ca T_r = \ca T(r,r-2) = \{P^{\ca M}\colon \ca M $ is a multiset of $r-3$ perfect matchings of the Petersen graph $ P\}$.
\end{theo}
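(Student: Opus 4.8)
The plan is to reduce the double equality to the single statement that the class~$2$ $r$-graphs of minimum order are exactly the graphs $P^{\ca M}$ with $|\ca M|=r-3$, all of order $10$ and with $\pi=r-2$. I first check the easy half. For a multiset $\ca M$ of $r-3$ perfect matchings of $P$, the graph $P^{\ca M}$ is $r$-regular on $V(P)$ and is an $r$-graph, since $P$ is a $3$-graph and each added matching meets every odd edge-cut at least once, giving $3+(r-3)=r$. The $r-3$ added copies are pairwise disjoint perfect matchings; together with any perfect matching of $P$ they yield $r-2$ pairwise disjoint perfect matchings, so $\pi(P^{\ca M})\ge r-2$. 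By Lemma~\ref{lem: P^M class 2} the graph is class~$2$, so $\pi\neq r$, and since $\ca G(r,r-1)=\emptyset$ we obtain $\pi(P^{\ca M})=r-2$. Thus each $P^{\ca M}$ is a class~$2$ $r$-graph of order $10$ lying in $\ca G(r,r-2)$, so the minimum order of a class~$2$ $r$-graph is at most $10$.

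For the reverse direction I would induct on $r$, the base case $r=3$ being the known determination $\ca T_3=\{P\}$. Let $G$ be a class~$2$ $r$-graph with $r\ge 4$. Using the recalled decomposition, write $G$ as the edge-disjoint union of a class~$1$ $k$-graph $D$ and an $(r-k)$-graph $J$. Since $G$ is class~$2$, the value $k$ satisfies $k\le r-3$: otherwise ($k\ge r-2$) the factors would split into $r$ pairwise disjoint perfect matchings and $G$ would be class~$1$. Hence $3\le r-k\le r-1$. As $D$ decomposes into $k$ perfect matchings, $J$ cannot be class~$1$ (else $G$ would be), so $J$ is a class~$2$ $(r-k)$-graph; because $r-k<r$, the induction hypothesis gives $|V(J)|\ge 10$, with equality only if $J\cong P^{\ca M''}$ for a multiset $\ca M''$ of $r-k-3$ perfect matchings of $P$. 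Since $J$ is spanning, $|V(G)|=|V(J)|\ge 10$, and equality forces $J\cong P^{\ca M''}$.

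It remains to identify $G$ when $|V(G)|=10$. Here $G$ arises from $P$ by adding a multiset $\ca N$ of $(r-k-3)+k=r-3$ perfect matchings on $V(P)$: those from $\ca M''$ lie in $E(P)$, those coming from $D$ a priori not. If some $N\in\ca N$ had $N\not\subseteq E(P)$, then Theorem~\ref{theo:P+matching} would make $P+N$ class~$1$, hence a union of four pairwise disjoint perfect matchings; combined with the remaining $r-4$ members of $\ca N$ this exhibits $G$ as a union of $r$ perfect matchings, contradicting that $G$ is class~$2$. So every $N\in\ca N$ lies in $E(P)$ and $G\cong P^{\ca N}$ with $|\ca N|=r-3$. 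This shows that the class~$2$ $r$-graphs of order $10$ are exactly the $P^{\ca M}$. The equalities then follow formally: $\ca T_r$, the set of smallest class~$2$ $r$-graphs, equals $\{P^{\ca M}\colon|\ca M|=r-3\}$; each such graph lies in $\ca G(r,r-2)$ and has order $10$, which is the global minimum order for class~$2$ $r$-graphs and hence the minimum within $\ca G(r,r-2)$, so $\ca T(r,r-2)$ equals the same set, while $\ca G(r,i)$ for $i<r-2$ contains no graph of order $10$ and therefore does not meet $\ca T_r$.

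The step I expect to be the main obstacle is the structural input used in the second paragraph: I need the complementary factor $J$ to be a genuine $(r-k)$-graph (in particular bridgeless), and not merely $(r-k)$-regular. This is indispensable, because both the order bound and the emergence of $P$ come entirely from feeding $J$ into the base case $\ca T_3=\{P\}$ and the induction hypothesis, and these fail for regular graphs that are not $r$-graphs---a single bridge would permit far smaller class~$2$ remainders. Securing that the decomposition can be arranged so that the lower-degree core is itself an $r$-graph is the technical heart of the argument; everything else reduces to Lemma~\ref{lem: P^M class 2}, Theorem~\ref{theo:P+matching}, and parity and $\pi$ bookkeeping.
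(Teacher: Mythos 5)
Your first paragraph (each $P^{\ca M}$ is a class~$2$ $r$-graph of order $10$ with $\pi=r-2$) is correct and matches the paper, and your third-paragraph device --- if some added perfect matching $N\not\subseteq E(P)$ then Theorem~\ref{theo:P+matching} makes $P+N$ a class~$1$ $4$-graph, whence $G$ decomposes into $4+(r-4)=r$ perfect matchings --- is a nice way to finish. But the argument has a genuine gap exactly where you flag it, and that gap is not a technicality: it is the entire content of the theorem. You need to write a (smallest) class~$2$ $r$-graph $G$ as an edge-disjoint union of a class~$1$ $k$-graph $D$ and an $(r-k)$-\emph{graph} $J$ with $1\le k\le r-3$, so that $J$ can be fed to the induction hypothesis. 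The fact recalled in the paper only provides an $(r-k)$-\emph{regular} remainder, and the natural choice fails: for $G=P+M$ with $M\subseteq E(P)$ and $k=\pi(G)=2$, the remainder $G-D$ is a $2$-regular graph that must contain an odd circuit (otherwise $G$ would be class~$1$), hence is never a $2$-graph. More generally, taking $D$ to be $\pi(G)$ disjoint perfect matchings always leaves a remainder violating the odd-cut condition whenever $G$ is class~$2$ and $\pi(G)=r-2$, which by the theorem itself is the generic case. So the decomposition you need cannot be obtained by any routine choice, and without it neither the order bound $|V(G)|\ge 10$ nor the emergence of $P$ follows.

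The paper fills this hole with the bulk of its proof, and the ingredients are absent from your proposal. It works only with a \emph{smallest} class~$2$ $r$-graph $G$, first showing (Claim~1) that $G$ has no non-trivial tight edge-cut. Then (Claim~2) it removes a perfect matching $M$; if $G-M$ is not an $(r-1)$-graph, there is an odd set $X$ with $|\partial_G(X)\setminus M|\le r-3$, and combining $|\partial_G(X)|\ge r+2$ (from Claim~1) with $|\partial_G(X)\cap M|\ge 5$ forces $|V(G)|=10$; an edge swap across $\partial_G(X)$ then produces a class~$1$ $r$-graph $G'$ from which $r-2$ disjoint perfect matchings of $G$ are extracted. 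Claim~3 repeats this analysis to find $r-3$ disjoint perfect matchings whose removal leaves the Petersen graph, with a further matching swap ($M\mapsto M'$) to dispose of the case where $H+M$ has a bridge. In short: your reduction to the induction hypothesis presupposes precisely the structural statement that the paper has to prove by hand, so as written the proof does not go through.
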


\begin{proof}

We will deduce the statement from the following three claims. 

\begin{claim}
	\label{obs:SmallestClass2r-graphNoTightCut}
	Let $r\ge3$. If $G$ is a smallest $r$-graph of class $2$, then $G$ has no non-trivial tight edge-cut.
\end{claim}

\emph{Proof of Claim \ref{obs:SmallestClass2r-graphNoTightCut}.}
	Suppose that there is an odd set $X \subseteq V(G)$ such that $|\partial_G(X)|=r$ and neither $X$ nor $X^c$ consists of  a single vertex. By the minimality of $|V(G)|$, the $r$-graphs $G/X$ and $G/X^c$ are class 1. As a consequence, $G$ is also class 1, a contradiction.
\ENDproof

\begin{claim} \label{Lemma-order10-r-2-PDPM}
	Let $r\ge3$. If $G$ is a smallest $r$-graph of class $2$, then $|V(G)|=10$ and $G$ has $r-2$ pairwise disjoint perfect matchings.
\end{claim}

\emph{Proof of Claim \ref{Lemma-order10-r-2-PDPM}.}
We prove the claim by induction on $r$. When $r=3$, the statement follows from the fact that the smallest $ 3 $-graph of class $2$  is the Petersen graph. Hence, let $r\ge 4$ and assume the statement is true for every $r'<r$.

Let $G$ be a smallest $r$-graph of class $2$. By Lemma \ref{lem: P^M class 2}, $|V(G)|\leq 10$. Note that every $ r $-graph has a perfect matching \cite{seymour1979multi}. Thus, let $M$ be a perfect matching of $G$.

If $H=G-M$ is an $(r-1)$-graph, then $H$ is also class $2$, since otherwise $G$ would be class $1$. 
Furthermore, we have $  |V(G)|=|V(H)|\geq10 $ in this case, which implies $  |V(G)|=|V(H)|=10$. Thus,  the statement follows by induction .

Therefore, we may assume that $H=G-M$ is not an $(r-1)$-graph. 
By the definition and Observation \ref{Observation-same-parity},
there is an odd set $X\subseteq V(G)$ such that $|\partial_G(X)\setminus M|\le r-3$. Moreover, we have $|\partial_G(X)|\ge r+2$ by Claim \ref{obs:SmallestClass2r-graphNoTightCut}. Hence, $|\partial_G(X)\cap M| = |\partial_G(X)| - |\partial_G(X)\setminus M| \ge 5.$ Since $M$ is a perfect matching, we conclude that $|V(G)|=10.$  As a consequence, $M$ has cardinality $5$ and thus, $|\partial_G(X)\cap M|=5$ and $|\partial_G(X)|=r+2.$ Let $x_1y_1$ and $x_2y_2$ be two different edges of $\partial_G(X)\cap M$, where $x_1,x_2\in X$. The graph $G'=G-\{x_1y_1,x_2y_2\} + \{x_1x_2,y_1y_2\}$ is still an $r$-graph. Indeed, for any odd set $Y\subseteq  V(G')$ we have $|\partial_{G'}(Y)| \ge |\partial_G(Y)|-2 \ge r$. Moreover, $|\partial_{G'}(X)| =r$ and hence, $G'$ is class $1$ by Claim \ref{obs:SmallestClass2r-graphNoTightCut}. Let $\ca N$ be a set of $r$ pairwise disjoint perfect matchings of $G'$ and let $N_x$ and $N_y$ be the perfect matchings containing $x_1x_2$ and $y_1y_2$ respectively (note that $N_x\ne N_y$ since otherwise $G$ itself would be class $1$). Then $\ca N \setminus \{N_x,N_y\}$ is a set of $r-2$ pairwise disjoint perfect matchings of $G$.
\ENDproof

\begin{claim}
\label{lem:P_subgraph}
Let $r\ge3$. If $G$ is a smallest $r$-graph of class $2$, then there is a set $\mathcal{M}$ of $r-3$ pairwise disjoint perfect matchings of $G$ such that $G - \bigcup_{M\in \ca M} M \cong P$.
\end{claim}

\emph{Proof of Claim \ref{lem:P_subgraph}.}
We prove the claim by induction on $r$. When $r=3$, the statement is trivial since the smallest $ 3 $-graph of class 2 is the Petersen graph. Hence, let $r\ge 4$ and assume the statement is true for every $r'<r$.

Let $G$ be a smallest $r$-graph of class 2. By Claim \ref{Lemma-order10-r-2-PDPM}, $G$ is of order 10 and has a set $\ca N$ of $r-2$ pairwise disjoint perfect matchings. Let $M \in \ca N$. Then 
$ G-M $ is class $ 2 $, since otherwise $ G $ would be class $ 1 $. If $G-M$ is an $(r-1)$-graph, then the statement follows by induction. Hence, there exists an odd set $X\subseteq V(G-M)$ with $ |\partial_{G-M}(X)| \leq  r-3$. 
Furthermore, $V(G-M)=V(G) $ and $|\partial_G(X)\setminus M| =|\partial_{G-M}(X)| $. By Claim \ref{obs:SmallestClass2r-graphNoTightCut} and Claim \ref{Lemma-order10-r-2-PDPM}, we have $|\partial_G(X)|\geq r+2$ and $ |M|=5 $. As a consequence,  we obtain $|\partial_G(X)|= r+2$ and $|\partial_G(X)\cap M| =  5$, which implies $|X|=5$. Set $H=G - \cup_{N\in \ca N} N$ and note that $H$ is a $2$-factor of $G$, which contains at least two odd circuits, since otherwise $G$ would be class 1. Every perfect matching of $\ca N$ contains at least one edge of $\partial_G(X)$ and hence, $|\partial_H(X)| =  0$. Thus, both $H[X]$ and $H[X^c]$ either consists of a $5$-circuit or a $3$-circuit and a $2$-circuit. We consider the following two cases.
	
{\bf Case 1.}\ $H+M$ is a $3$-graph.
	
In this case $H+M \cong P$, since otherwise $H+M$ is class $1$ which would imply that $G$ is also class 1.

{\bf Case 2.}\ $H+M$ is not a $3$-graph.
	
Thus, $H+M$ has a bridge, which implies that both $H[X]$ and $H[X^c]$ consists of  a $3$-circuit and a $2$-circuit and $|\partial_{H+M}(V(C) \cup V(C'))| =  1$, where $C$ is the $3$-circuit of $H[X]$ and $C'$ is the $2$-circuit of $H[X^c]$. As a consequence, there is only one possibility for the structure of $G+M$, which is depicted in Figure \ref{fig:H+M}. With respect to the vertex labels in Figure \ref{fig:H+M}, set $M'=\left(M\setminus \{v_1v_4, v_2v_3\}\right)\cup \{v_1v_2, v_3v_4\}$ and $\ca N' = \left( \ca N \setminus \{M\} \right) \cup \{M'\}$. Then, $\ca N'$ is a set of $r-2$ pairwise disjoint perfect matchings of $G$. Now, consider $\ca N'$ and $M'$ instead of $\ca N$ and $M$, respectively, and repeat the same arguments as above. We deduce that $G-M'$ is an $(r-1)$-graph and the statement follows by induction.
\ENDproof

\begin{figure}[htbp]
\centering
\scalebox{1}{
\begingroup%
  \makeatletter%
  \providecommand\color[2][]{%
    \errmessage{(Inkscape) Color is used for the text in Inkscape, but the package 'color.sty' is not loaded}%
    \renewcommand\color[2][]{}%
  }%
  \providecommand\transparent[1]{%
    \errmessage{(Inkscape) Transparency is used (non-zero) for the text in Inkscape, but the package 'transparent.sty' is not loaded}%
    \renewcommand\transparent[1]{}%
  }%
  \providecommand\rotatebox[2]{#2}%
  \newcommand*\fsize{\dimexpr\f@size pt\relax}%
  \newcommand*\lineheight[1]{\fontsize{\fsize}{#1\fsize}\selectfont}%
  \ifx\svgwidth\undefined%
    \setlength{\unitlength}{234.28068169bp}%
    \ifx\svgscale\undefined%
      \relax%
    \else%
      \setlength{\unitlength}{\unitlength * \real{\svgscale}}%
    \fi%
  \else%
    \setlength{\unitlength}{\svgwidth}%
  \fi%
  \global\let\svgwidth\undefined%
  \global\let\svgscale\undefined%
  \makeatother%
  \begin{picture}(1,0.32520672)%
    \lineheight{1}%
    \setlength\tabcolsep{0pt}%
    \put(0,0){\includegraphics[width=\unitlength,page=1]{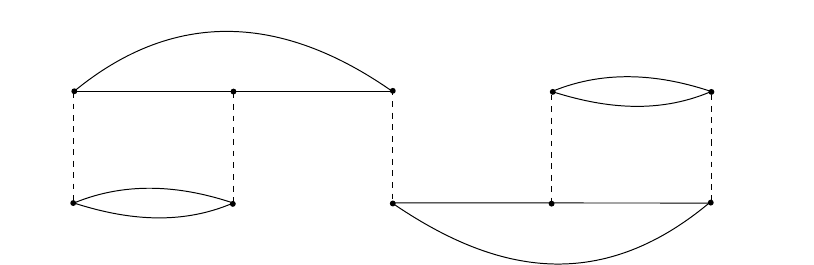}}%
    \put(0.04282678,0.06789375){\makebox(0,0)[lt]{\lineheight{1.25}\smash{\begin{tabular}[t]{l}$v_4$\end{tabular}}}}%
    \put(0.29252727,0.06789375){\makebox(0,0)[lt]{\lineheight{1.25}\smash{\begin{tabular}[t]{l}$v_3$\end{tabular}}}}%
    \put(0.26691694,0.22475688){\makebox(0,0)[lt]{\lineheight{1.25}\smash{\begin{tabular}[t]{l}$v_2$\end{tabular}}}}%
    \put(0.04602807,0.20875043){\makebox(0,0)[lt]{\lineheight{1.25}\smash{\begin{tabular}[t]{l}$v_1$\end{tabular}}}}%
    \put(0.95199311,0.22795817){\color[rgb]{0,0,0}\makebox(0,0)[lt]{\lineheight{0.6250006}\smash{\begin{tabular}[t]{l}$X$\end{tabular}}}}%
    \put(0,0){\includegraphics[width=\unitlength,page=2]{H+Mv2.pdf}}%
  \end{picture}%
\endgroup%
} 
\caption{The graph $H+M$ in Case 2 of the proof of Claim \ref{lem:P_subgraph} (Theorem \ref{Theorem-cong-P+M_V2}). The dashed edges belong to $M$.}
\label{fig:H+M}
\end{figure}

By Claim \ref{Lemma-order10-r-2-PDPM},
	we have $\ca T_r = \ca T(r,r-2)$. Moreover, by Theorem \ref{theo:P+matching}
	and Claim \ref{Lemma-order10-r-2-PDPM},
	for any multiset $\ca M $ of $r-3$ perfect matchings of $ P$,  the graph $ P^{\ca M} $ is in $ \ca T_r $. It remains to show that, if
 $ G \in\ca T_r$, then $ G\cong  P^{\ca M} $ for a suitable multiset $\ca M $.
By Claim \ref{lem:P_subgraph}, there is a set $\ca N$ of $r-3$ pairwise disjoint perfect matchings of $G$ such that the graph $H=G - \bigcup_{N\in \ca N} N$ is isomorphic to the Petersen graph. For every $N \in \ca N$, the graph $H+N$ is class 2, since otherwise $G$ is class 1. Therefore,  $ G\cong P^{\cal N} $
 by Theorem \ref{theo:P+matching}.
\end{proof}

\subsection{Lower bounds for $|\ca T_r|$}

The following lemma is a direct consequence of the fact that the Petersen graph is 
3-arc-transitive, see e.g.~Corollary 1.8 in \cite{Babai_Handbook}.
That is, for any two paths of length 3 of $P$ there is an
automorphism of $P$ which maps one to the other.

\begin{lem}\label{lem:autP_fixing_3_pm}
	Let $M_1,\dots,M_6$ be the six perfect matchings of the Petersen graph $P$. Moreover, let $N_1,N_2,N_3 \in \{M_1,\dots,M_6\}$ and $g\colon \{N_1,N_2,N_3\} \to \{M_1,\dots,M_6\}$ be an injective function. There is an automorphism $ (\theta,\phi) $ of $P$ such that, for all $i\in\{1,2,3\}$, $\phi(N_i)=g(N_i)$.
\end{lem}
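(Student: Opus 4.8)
The plan is to reduce the lemma to the assertion that $\mathrm{Aut}(P)$ acts $3$-transitively on the set of the six perfect matchings of $P$, and to obtain this from the $3$-arc-transitivity of $P$ via an $\mathrm{Aut}(P)$-equivariant bijection between the $3$-arcs (the directed paths of length $3$) of $P$ and the ordered triples of distinct perfect matchings. Since an automorphism $(\theta,\phi)$ permutes the perfect matchings through $\phi$, once we know that any ordered triple of distinct perfect matchings can be carried to any other by an automorphism, the statement follows immediately: the three matchings $N_1,N_2,N_3$ are distinct, their images under $g$ are distinct because $g$ is injective, and an automorphism realizing $(N_1,N_2,N_3)\mapsto(g(N_1),g(N_2),g(N_3))$ is exactly what is required.

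First I would set up a dictionary between edges and perfect matchings. A count of incidences shows that the six perfect matchings together contain $6\cdot 5=30$ edges with multiplicity, so each of the $15$ edges lies in exactly two perfect matchings; this assigns to every edge an unordered pair of matchings. Conversely, two distinct perfect matchings $M,M'$ meet in exactly one edge: their symmetric difference is a disjoint union of even circuits, and the well-known facts that $P$ has girth $5$ and is not Hamiltonian rule out $|M\cap M'|=0$, while summing $|M\cap M'|$ over the $\binom{6}{2}=15$ pairs gives $15$ and hence forces $|M\cap M'|=1$. Thus edges correspond bijectively to unordered pairs of matchings; moreover the three edges at a vertex are pairwise adjacent, so their pairs are pairwise disjoint and partition the six matchings into three pairs.

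Next I would define the map $\Psi$ on $3$-arcs. For a $3$-arc $v_0v_1v_2v_3$ with edge-pairs $P_1,P_2,P_3$, adjacency at $v_1$ and $v_2$ yields $P_1\cap P_2=P_2\cap P_3=\varnothing$, so $P_1$ and $P_3$ lie among the four matchings off $P_2$. The key local fact is that $P_1\cap P_3$ is a single matching $s$: were $P_1,P_3$ disjoint, they would partition those four matchings, so $P_3$ would be the third pair of the vertex-partition at $v_1$, forcing the edge $v_2v_3$ to be incident with $v_1$, which is absurd. Writing $P_1=\{s,t\}$ and $P_3=\{s,u\}$, I set $\Psi(v_0v_1v_2v_3)=(t,s,u)$; this is an ordered triple of distinct matchings, and since $s,t,u$ are read off from the edge-pairs and their intersections alone, $\Psi$ is $\mathrm{Aut}(P)$-equivariant.

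Finally I would verify that $\Psi$ is a bijection. It is injective: from $(t,s,u)$ one recovers the first and third edges as the unique edges carrying the pairs $\{s,t\}$ and $\{s,u\}$, and because these two edges share no vertex while $P$ has girth $5$, there is a unique edge joining them, which determines the middle edge and the orientation, hence the whole $3$-arc. As there are $10\cdot 3\cdot 2\cdot 2=120$ $3$-arcs and $6\cdot 5\cdot 4=120$ ordered triples of distinct matchings, injectivity gives bijectivity, and transporting $3$-arc-transitivity through $\Psi$ produces the desired $3$-transitivity. The hard part will be the two girth-based rigidity claims — that $P_1\cap P_3$ is always a singleton and that two vertex-disjoint edges are joined by a unique edge — since these are precisely where the special geometry of $P$, rather than soft counting, is indispensable.
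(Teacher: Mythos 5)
Your argument is correct and rests on the same two pillars as the paper's proof: the dictionary identifying each edge of $P$ with the unordered pair of perfect matchings containing it, and the $3$-arc-transitivity of $P$, which you transport to ordered triples of distinct perfect matchings exactly as the paper transports the $3$-path formed by the edges in $N_1\cap N_2$ and $N_1\cap N_3$ to the corresponding path on the target side. The only real difference is organizational --- packaging the correspondence as a global equivariant bijection and closing it with the count $120=120$ has the nice side effect of actually proving the ``distance one'' fact the paper merely asserts --- though you should note two small patches: the $N_i$ are not assumed distinct, so one first reduces to that case as the paper does, and the step ``$30$ incidences over $15$ edges, hence each edge lies in exactly two perfect matchings'' tacitly needs uniformity (immediate from edge-transitivity, or from combining $\sum_e d_e=30$ with $\sum_e\binom{d_e}{2}\ge 15$ and the Cauchy--Schwarz equality case).
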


\begin{proof}
	Let $N_1,N_2$ and $N_3$ be pairwise different perfect matchings of $P$. If we prove the statement in this case then the proof is complete.
	
	Note that the unique edge $x_1x_2$ in $N_1\cap N_2$ and the unique edge $x_3x_4$ in $N_1 \cap N_3$ are at distance one, i.e.\ the subgraph $P[\{x_1,x_2,x_3,x_4\}]$ is a path $T$ on four vertices. Up to changing names to such vertices, we may assume that $T=x_1x_2x_3x_4$. The same holds for the unique edge $y_1y_2$ in $g(N_1)\cap g(N_2)$ and the unique edge $y_3y_4$ in $g(N_1) \cap g(N_3)$. Without loss of generality, we can assume again that $y_1y_2y_3y_4$ is a path on four vertices.
	
	Since $P$ is 3-arc-transitive there is an automorphism $ (\theta,\phi) $ of $P$ such that, for all $i\in\{1,\dots,4\} $, $ \theta(x_i)=y_i$. Since $ (\theta,\phi) $ is an automorphism, $\phi(N_1)$ must be a perfect matching. Moreover, since the only perfect matching of $P$ containing both $y_1y_2$ and $y_3y_4$ is $g(N_1)$ we get $\phi(N_1)=g(N_1)$.
	
	Similarly, $\phi(N_2)$ and $\phi(N_3)$ are perfect matchings of $P$ different from $\phi(N_1)$, such that $y_1y_2\in \phi(N_2)$ and $y_3y_4 \in \phi(N_3)$. Then, the only possibility is that $\phi(N_2)=g(N_2)$ and $\phi(N_3)=g(N_3).$
\end{proof}

We now consider partitions of integers, which are ways of writing an integer as a sum of positive integers, see e.g.~\cite{matouvsek2008invitation}. 
We are interested in partitions of an integer into a fixed number of parts.
We allow $0$ to be a part of a partition.
A \emph{partition} of an integer $n$ into $k$ parts is a multiset of $k$ integers
$n_1, \dots,n_k$ with $n_i \geq 0$ for $i \in \{1, \dots,k\}$ 
such that $n = \sum_{i=1}^k n_i$. Two partitions of $n$ are equal if
they yield the same multiset, i.e.\ if they differ only in the order of their elements. 
For two positive integers $k\le n$, let $p'(n,k)$ be the number of partitions of $n$ into $k$ parts. Set $p'(0,k)=1$.   

\begin{theo} \label{thm: lower bound S(r,r-2)}
If $3\leq r \leq 8$, then $|\ca T_r| = p'(r-3,6)$, and 
	if $r \geq 9$, then  $|\ca T_r| > p'(r-3,6)$.
\end{theo}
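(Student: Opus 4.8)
The plan is to recast the computation of $|\ca T_r|$ as an orbit-counting problem for a group acting on integer tuples. By Theorem~\ref{Theorem-cong-P+M_V2}, $\ca T_r$ consists exactly of the graphs $P^{\ca M}$, where $\ca M$ ranges over the multisets of $r-3$ perfect matchings of $P$. Since $P$ has precisely six perfect matchings $M_1,\dots,M_6$, such a multiset is encoded by a tuple $\mathbf n=(n_1,\dots,n_6)\in\mathbb Z_{\ge 0}^6$ with $\sum_i n_i=r-3$, where $n_i$ counts the copies of $M_i$. Thus $|\ca T_r|$ is the number of isomorphism classes among the graphs $P^{\mathbf n}$, and everything reduces to deciding when $P^{\mathbf n}\cong P^{\mathbf n'}$.

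First I would pin down this isomorphism relation; this is the conceptual crux and the step I expect to be the main obstacle. I would use three facts about $P$: every edge lies in exactly two of the six perfect matchings, any two matchings share exactly one edge, and hence $e\mapsto\{i,j\}$ (the indices of the two matchings through $e$) is a bijection $E(P)\to\binom{[6]}{2}=E(K_6)$. Consequently the edge indexed by $\{i,j\}$ has multiplicity $1+n_i+n_j$ in $P^{\mathbf n}$, and since each $n_i$ is recovered from the three sums $n_i+n_j,\ n_i+n_k,\ n_j+n_k$, the edge-multiplicity function determines $\mathbf n$. Moreover the underlying simple graph of $P^{\mathbf n}$ is $P$ itself, so any isomorphism $P^{\mathbf n}\to P^{\mathbf n'}$ induces an automorphism of $P$ carrying multiplicities to multiplicities. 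Writing $G\le S_6$ for the image of $\mathrm{Aut}(P)$ in its action on $\{M_1,\dots,M_6\}$, this gives $P^{\mathbf n}\cong P^{\mathbf n'}$ if and only if $\mathbf n,\mathbf n'$ lie in one $G$-orbit. By Lemma~\ref{lem:autP_fixing_3_pm} the group $G$ is $3$-transitive, and as a quotient of $\mathrm{Aut}(P)\cong S_5$ it satisfies $|G|\mid 120$; hence $|G|=120$ and $G$ is sharply $3$-transitive (a copy of $\mathrm{PGL}_2(5)$ on $\mathbb P^1(\mathbb F_5)$).

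Now $p'(r-3,6)$ is precisely the number of $S_6$-orbits on $\{\mathbf n:\sum_i n_i=r-3\}$, since an $S_6$-orbit is a multiset of six nonnegative integers, i.e.\ a partition of $r-3$ into six parts. As $G\le S_6$, the number of $G$-orbits is at least the number of $S_6$-orbits, with equality exactly when each $S_6$-orbit remains a single $G$-orbit. For $r\le 8$, i.e.\ $r-3\le 5$, I would check the at most seven partition shapes of $r-3$ one by one and observe that in every case the tuple $\mathbf n$ is determined by at most three distinguished points of $\{1,\dots,6\}$; for instance a shape whose distinct values occur with multiplicities $(1,2,3)$ is fixed once one names its largest-value point and the unordered pair carrying the next value. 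Three-transitivity of $G$ then collapses each such $S_6$-orbit to a single $G$-orbit, yielding $|\ca T_r|=p'(r-3,6)$.

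For $r\ge 9$, i.e.\ $r-3\ge 6$, I would exhibit one $S_6$-orbit that splits, exploiting that every $G$-orbit has size dividing $|G|=120$. An $S_6$-orbit of a tuple whose distinct values occur with multiplicities $c_1,c_2,\dots$ has size $6!/\prod_v c_v!$, and whenever this fails to divide $120$ (in particular when it exceeds $120$) the orbit cannot be a single $G$-orbit. For $r=9$ the values $\{0,0,1,1,2,2\}$ (multiplicities $(2,2,2)$) give orbit size $90\nmid 120$; for $r\ge 10$ the values $\{0,0,1,1,2,r-7\}$ (multiplicities $(2,2,1,1)$) give orbit size $180>120$. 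In either case some $S_6$-orbit breaks into several $G$-orbits, so $|\ca T_r|>p'(r-3,6)$, completing the dichotomy.
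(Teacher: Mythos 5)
Your proposal is correct, and while it shares the paper's skeleton---reduce to multisets of the six perfect matchings via Theorem~\ref{Theorem-cong-P+M_V2}, label each edge of $P$ by the pair of matchings containing it, and invoke the $3$-transitivity of $\mathrm{Aut}(P)$ on matchings (Lemma~\ref{lem:autP_fixing_3_pm})---it packages the argument more systematically and genuinely diverges on the strict inequality for $r\ge 9$. The paper proves separately that isomorphic graphs induce the same partition (its Claim~1) and that for $r\le 8$ equal partitions force isomorphism (its Claim~3, with a complementation trick for the partitions $1,1,1,1,0,0$, $2,1,1,1,0,0$ and $1,1,1,1,1,0$); your single statement that $P^{\mathbf n}\cong P^{\mathbf n'}$ iff $\mathbf n$ and $\mathbf n'$ lie in one orbit of the image $G\le S_6$ of $\mathrm{Aut}(P)$ subsumes both, and your ``at most three distinguished points'' check is in substance the same case split, complementation included. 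The real difference is at $r\ge 9$: the paper exhibits two explicit graphs, $P+tN_1+2N_2+N_3+N_4$ and $P+tN_1+2N_2+N_3+N_5$, inducing the same partition and distinguishes them by the local structure around the unique edge of multiplicity $t+3$, whereas you obtain the splitting of an $S_6$-orbit purely from orbit--stabilizer: the orbit sizes $90$ (for $r=9$) and $180$ (for $r\ge 10$) respectively fail to divide, and exceed, $|G|=120$. Your divisibility argument is shorter and avoids any geometric verification inside $P$, at the cost of not producing explicit non-isomorphic witnesses, which the paper's construction does provide. Both routes yield complete proofs.
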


\begin{proof} 
By Theorem \ref{Theorem-cong-P+M_V2}, 
any graph $G \in \ca T_r$ can be expressed as $G = P + \sum_{i=1}^6 n_i M_i$, where $M_1, \dots, M_6$ are the six pairwise different perfect matchings of $P$. In this case, $n_1, \dots, n_6$ is a partition of $r-3$ into six parts. We say that $G$ \emph{induces} this partition of $r-3$. 

\begin{claim}\label{claim: 1 min class 2}
Let $r \geq 3$ be an integer and
$G, G' \in \ca T_r$. If $G \cong G'$, then $G$ and $G'$ induce the same partition of $r-3$.
\end{claim}

\emph{Proof of Claim \ref{claim: 1 min class 2}.}
We can assume that $G = P + \sum_{j=1}^6 n_j M_j$ and $G' = P + \sum_{j=1}^6 n_j' M_j$.
 For the subgraph $ P $ of $ G $ and $ G' $, we label an edge $e$ of $P$ by the set $\{p,q\}$ if $M_p \cap M_q = \{e\}$, $p \not = q$. 
Then all possible labels are used and no two edges receive the same label in $ P $.

Since $G \cong G'$, there is an isomorphism  between $G$ and $G'$ which maps the labeled edge
$\{p,q\}$ of $ G $ to a labeled edge $\{i_p,i_q\}$ of $ G' $ for each $ \{p,q\} \subseteq\{1,\ldots,6\} $. Furthermore, $n_p + n_q = n_{i_p}'+n_{i_q}'$.
Thus, for $\{1,2\}, \dots, \{1,6\}$, we get that $4n_1 + \sum_{j=1}^6 n_j = 4n_{i_1}' + \sum_{j=1}^6 n_{i_j}'$. Since 
$\sum_{j=1}^6 n_j = \sum_{j=1}^6 n_{i_j}' = r-3$,  it follows that $n_1 = n_{i_1}'$. With similar arguments, we further obtain that $n_j = n_{i_j}'$ for each $j \in \{1, \dots, 6\}$.
\ENDproof


\begin{claim} \label{claim: 2 min class 2}
If $r \geq 9$, then there are non-isomorphic graphs in $\ca T_r$ which induce the same partition. 
\end{claim}

\emph{Proof of Claim \ref{claim: 2 min class 2}.} Let $N_1,\dots,N_4$ be four pairwise different perfect matchings of $P$ such that the edge in $N_1\cap N_2 = \{uv\}$ is adjacent to the edge in $N_3\cap N_4=\{uz\}$. There is a fifth perfect matching $N_5$ of $P$ such that the unique edge in $N_3\cap N_5$ is not adjacent to $uv$. 

Let $t\ge2$ be an integer and consider the $(t+7)$-graphs $G_t^1 = P + tN_1 + 2N_2 + N_3 + N_4$ and $G_t^2 = P + tN_1 + 2N_2 + N_3 + N_5$. Note that both $G_t^1$ and $G_t^2$ have exactly one pair of vertices connected by $ t+3 $ edges, i.e.\ $ |[u,v]_{G^1_t} |=|[u,v]_{G^2_t}|=t+3$.
 On one hand, $uv$ is adjacent to $uz$ and $|[u,z]_{G^1_t}|=3$. On the other hand, by the choice of $N_5$, $uv$ is adjacent only to edges $xy$ such that $|[x,y]_{G_t^2}|\le 2$. We deduce that $G_t^1 \not\cong G_t^2.$
\ENDproof

%

\begin{claim}\label{claim: 3 min class 2}
	Let $r \leq 8$ and
	$G, G' \in \ca T_r$. If $G$ and $G'$ induce the same partition of $r-3$,
	then $G \cong G'$.
\end{claim}

\emph{Proof of Claim \ref{claim: 3 min class 2}.} 
Assume that $ G = P^{\ca M}= P + \sum_{j=1}^6 n_j M_j$ and 
$G'=P^{\ca M'} = P + \sum_{j=1}^6 n_j' M_j$ induce the same partition of $r-3$. Let $\ca M_0 = \{M_j \colon n_j\ne0\}$ and $\ca M_0' = \{M_j \colon n_j'\ne0\}$. Then $|\ca M_0 |= |\ca M'_0| $.

If $|\ca M_0|\le 3$, choose a bijection $g\colon \ca M_0 \to \ca M'_0$ such that $g(M_{\alpha})=M_{\beta}$ if and only if $n_{\alpha} = n_{\beta}'$. By Lemma \ref{lem:autP_fixing_3_pm}, there is an automorphism $(\theta,\phi)$ of $P$ such that, for each perfect matching $N\in\ca M_0$, $\phi(N) = g(N)$. It follows that $(\theta,\phi')$ is an isomorphism of $P^{\ca M}$ to $P^{\ca M'}$, where $ \phi'(M_i)=\phi(M_i) $  for each $ i\in\{1,\ldots,6\} $.
The only other cases are the following.
\begin{itemize}
	\item $r-3 =4$ with partition $1,1,1,1,0,0$;
	\item $r-3 =5$ with partitions $2,1,1,1,0,0$ or $1,1,1,1,1,0.$
\end{itemize}
In such cases, we let $\ca M_1 = \{M_j \colon n_j=1\}$ and $\ca M_1' = \{M_j \colon n_j'=1\}$. Let $\ca N_1$ be the set of perfect matchings of $P$ different from those of $ \ca M_1$ and $\ca N'_1$ be the set of perfect matchings of $P$ different from those of $\ca M'_1$. Then, there is a bijection $g\colon \ca N_1 \to \ca N'_1$ such that $g(M_{\alpha})=M_{\beta}$ if and only if $n_{\alpha} = n_{\beta}'$. The proof now, follows as above. Namely, since $|\ca N_1|=|\ca N_1'|\le3 $, by Lemma \ref{lem:autP_fixing_3_pm}, there is an automorphism $(\theta,\phi)$ of $P$ such that, for all $N\in\ca N_1$, $\phi(N) = g(N)$. Then, $(\theta,\phi')$ is an isomorphism of $P^{\ca M}$ to $P^{\ca M'}$, where $ \phi'(M_i)=\phi(M_i) $ for each $ i\in\{1,\ldots,6\} $.
\ENDproof

By Claims \ref{claim: 1 min class 2}, \ref{claim: 2 min class 2} and \ref{claim: 3 min class 2}, the theorem is proved.
\end{proof}

\section{Complete sets} \label{Sec: H-coloring}

In this section we give the following characterization of $\ca H_r$: $G\in \ca H_r$ if and only if the only connected $r$-graph coloring $G$ is $G$ itself. Moreover, we show that $\ca H_r$ is an infinite set when $r\ge 4$. For $r=3$ it turns out that, if the Petersen Coloring Conjecture is false, then $\ca H_3$ is an infinite set too.
We prove similar results for the restriction on simple $r$-graphs.

We start with some preliminary technical results. In particular, we introduce a lifting operation for $r$-graphs.

\subsection{Substructures and lifting}

Let $G$ be a graph and $F \subseteq E(G)$. We say that $F$ \emph{induces} a subgraph $H$
of $G$ if $E(H) = F$ and $V(H)$ contains all vertices of $G$ which are incident with
an edge of $F$. We denote such a subgraph $H$ by $G[F]$. 
A spanning subgraph $ G' $ of $ G $ is a 
$ \{K_{1,1}, C_m\colon m\geq3\} $-factor if each component of $ G' $ is isomorphic to an element of $ \{K_{1,1}, C_m\colon m\geq3\} $, where $ K_{s,t} $ is the complete bipartite graph with two partition sets of sizes $ s $ and $ t $.
Some of the following observations appear also in  \cite{MTZ_r_graphs}.

\begin{obs}
	\label{obs:coloring_basics}
	Let $H$ and $G$ be graphs and let $f$ be an $H$-coloring of $G$.
	\begin{itemize}
		\item[(i)] $\chi'(G) \leq \chi'(H)$.
		\item[(ii)] If $M_1,\dots, M_k$ are $k$ pairwise disjoint perfect matchings in $H$, then $f^{-1}(M_1),\dots, f^{-1}(M_k)$ are $k$ pairwise disjoint perfect matchings in $G$.
		\item[(iii)] If $C$ is a $2$-regular subgraph of $H$, then $f^{-1}(E(C))$ induces a $2$-regular subgraph in $G$.
		\item[(iv)] If $ H' $ is a $ \{K_{1,1}, C_m\colon m\geq3\} $-factor in $H$, then $ f^{-1} (E(H'))$ induces a $ \{K_{1,1}, C_m\colon m\geq3\} $-factor in $ G $.
	\end{itemize}
\end{obs}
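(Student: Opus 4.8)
The plan is to isolate a single structural fact from the definition of an $H$-coloring and then read off all four items from it. Since $G$ and $H$ are both $r$-regular, the requirement $f(\partial_G(v))=\partial_H(u)$ together with the injectivity of $f$ on adjacent edges forces $f$ to restrict, for every $v\in V(G)$ and its associated $u\in V(H)$, to a \emph{bijection} $\partial_G(v)\to \partial_H(u)$ (an injection between two $r$-element sets). I would record this local-bijection property first; all of (i)--(iv) then follow by counting how the edges incident to $v$ distribute into a prescribed subgraph of $H$.

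For (i), I would fix a proper $\chi'(H)$-edge-coloring $c$ of $H$ and consider $c\circ f$. If $e_1,e_2$ are adjacent in $G$ at a vertex $v$, then $f(e_1),f(e_2)$ are distinct edges of $\partial_H(u)$, hence adjacent in $H$, so $c(f(e_1))\neq c(f(e_2))$. Thus $c\circ f$ is a proper edge-coloring of $G$ with $\chi'(H)$ colors, giving $\chi'(G)\leq\chi'(H)$.

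For (ii), disjointness of the sets $f^{-1}(M_i)$ is immediate from disjointness of the $M_i$. For a fixed $i$, the local bijection sends $\partial_G(v)$ onto $\partial_H(u)$, and since $M_i$ is a perfect matching exactly one edge of $\partial_H(u)$ lies in $M_i$; its unique preimage in $\partial_G(v)$ is then the unique edge of $f^{-1}(M_i)$ at $v$, so $f^{-1}(M_i)$ meets every vertex exactly once and is a perfect matching. The same local count yields (iii): for $v$ with associated $u$, the number of edges of $\partial_G(v)$ lying in $f^{-1}(E(C))$ equals $\deg_C(u)\in\{0,2\}$, because $C$ is $2$-regular; hence every vertex incident to $f^{-1}(E(C))$ has degree exactly $2$ in $G[f^{-1}(E(C))]$.

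Item (iv) is the only one that needs more than a degree count, and I expect it to be the main obstacle. The local count again shows that every vertex of $G'=G[f^{-1}(E(H'))]$ has degree $\deg_{H'}(u)\in\{1,2\}$ and that $G'$ is spanning, so each component of $G'$ is a single edge, a path, or a cycle; the task is to exclude paths of length $\geq 2$. Here I would use that $f$ is edge-compatible: an edge $v_1v_2\in f^{-1}(E(H'))$ satisfies $f(v_1v_2)=u_1u_2$ with $u_i$ the vertex associated to $v_i$, and $\deg_{G'}(v_i)=\deg_{H'}(u_i)$. If some component of $G'$ were a path of length $\geq 2$, it would contain an edge $v_1v_2$ with $\deg_{G'}(v_1)=1$ and $\deg_{G'}(v_2)=2$, producing an edge $u_1u_2\in E(H')$ with $\deg_{H'}(u_1)=1\neq 2=\deg_{H'}(u_2)$ whose endpoints lie in a common component of $H'$. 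This is impossible in a $\{K_{1,1},C_m\colon m\geq 3\}$-factor, where adjacent vertices always have equal degree (both $1$ in a $K_{1,1}$, both $2$ in a cycle). Hence every component of $G'$ is a $K_{1,1}$ or a cycle, and $G'$ is the desired factor.
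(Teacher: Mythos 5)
Your strategy --- extract from the definition the fact that $f$ restricts to a bijection $\partial_G(v)\to\partial_H(u)$ and then read off each item by counting how $\partial_G(v)$ meets the preimage of the prescribed subgraph --- is exactly the mechanism behind the paper's proof, which packages it as: the preimage of a $k$-regular (spanning) subgraph of $H$ induces a $k$-regular (spanning) subgraph of $G$. Items (i)--(iii) are handled identically in substance. For (iv) the paper instead splits $H'$ into its $1$-regular part and its $2$-regular part and notes that the two preimages are a $1$-regular and a $2$-regular subgraph of $G$ that are vertex-disjoint and together spanning; your route, excluding path components of length at least $2$ because adjacent vertices of a $\{K_{1,1},C_m\colon m\geq 3\}$-factor have equal degree, is a correct and in fact more explicit variant of the same idea. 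Your edge-compatibility step is also safe in the degenerate situation where the two endvertices of an edge of $G$ are associated with the same vertex $u$ of $H$: there the path argument gives $1=\deg_{H'}(u)=2$ directly.

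There is one genuine hole in your (iv), in the concluding sentence ``every component of $G'$ is a $K_{1,1}$ or a cycle, and $G'$ is the desired factor.'' The paper allows parallel edges, and a cycle component of $G'$ can be a circuit of length $2$, which is not an element of $\{K_{1,1},C_m\colon m\geq 3\}$; your degree and adjacency arguments do not exclude this. The situation can actually occur: if $v_1,v_2$ are joined by exactly two parallel edges and $f(\partial_G(v_1))=f(\partial_G(v_2))=\partial_H(u)$ with $u$ on a cycle component of $H'$, the two parallel edges may be mapped to the two $H'$-edges at $u$ and then form a $C_2$ component of $G'$ (one can realize this already with $H=K_4$, $H'$ a Hamiltonian $C_4$, and a suitable $3$-graph $G$ with one doubled edge). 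To be fair, the paper's own one-line argument for (iv) has exactly the same blind spot --- a ``$2$-regular subgraph'' obtained as a preimage may contain $C_2$ components even when $H'$ does not --- and the issue is harmless in the paper's applications (Lemma \ref{Lemma-contract-H-class1} and Theorem \ref{theo:reduction_simple_case}), where the relevant induced subgraphs of $G$, such as the $K_{r,r-1}$ blocks, are simple. So your proposal is at the same level of rigor as the paper; a fully airtight version of (iv) would have to rule out $C_2$ components explicitly, e.g.\ by an additional assumption on parallel edges of $G$.
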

\begin{proof}
Let $ H' $ be a subgraph of $ H $ and $ G' $ be the subgraph of $ G $ induced by $f^{-1}(E(H'))$. 
By the definition of $ H $-coloring,  if $ H' $ is $ k $-regular (spanning, respectively) then $ G' $ is $ k $-regular (spanning, respectively). Then statements $ (i),(ii) $ and $ (iii) $ can be obtained immediately. 
In order to show statement $ (iv) $, assume that $ H' $ is a $ \{K_{1,1}, C_m\colon m\geq3\} $-factor.
We decompose $ H' $ into a $ 1 $-regular subgraph $ H_1 $ and a $ 2 $-regular subgraph $ H_2 $. The sets $ f^{-1} (E(H_1))$ and $ f^{-1} (E(H_2))$ induce a $ 1 $-regular subgraph $ G_1 $ and a $ 2 $-regular subgraph $ G_2 $ of $ G $, respectively. By the definition of $ H $-coloring, $ G_1 $ and $ G_2 $ are disjoint.
This completes the proof.
\end{proof}

Let $G$ be a graph and let $x \in V(G)$ with $ |N_G(x)|\geq2 $. A \emph{lifting} (of $G$) at $x$ is the following operation: Choose two distinct neighbors $y$ and $z$ of $x$, delete an edge $e_1$ connecting $x$ with $y$, delete an edge $e_2$ connecting $x$ with $z$ and add a new edge $e$ connecting $y$ with $z$; additionally, if $ e_1 $ and $ e_2 $ were the only two edges incident with $ x $, then delete the vertex $ x $ in the new graph. We say $e_1$ and $e_2$ are \emph{lifted to} $e$; the new graph is denoted by $G(e_1,e_2)$. 

We will make use of the following fact. Let $G$ be a graph, then 
$\vert \partial_G(X \cap Y) \vert + \vert \partial_G(X \cup Y) \vert \leq \vert \partial_G(X) \vert + \vert \partial_G(Y) \vert$ for every $X,Y \subseteq V(G)$.

\begin{lem}\label{Lem:r-graph lifting}
	Let $ r \geq 2$ be an integer and let $G$ be a connected graph of order at least $ 2 $ with a vertex $x \in V(G)$ such that
	\begin{itemize}
		\item $d_G(v)=r$ for all $v \in V(G)\setminus\{x\}$, and
		\item if $\vert V(G) \vert$ is even, then $d_G(x)\neq r$, and
		\item $\vert \partial_G(S) \vert \geq r$ for every $S\subseteq V(G)\setminus\{x\}$ of odd cardinality.
	\end{itemize}
	Then, for every labeling $\partial_G(x)=\{e_1,\ldots, e_{d_G(x)}\}$ there exists an  $i \in \mathbb{Z}_{d_G(x)}$ such that $G(e_i,e_{i+1})$ is a connected graph with $\vert \partial_{G(e_i,e_{i+1})}(S') \vert \geq r$ for every $S'\subseteq V(G(e_i,e_{i+1}))\setminus\{x\}$ of odd cardinality.
\end{lem}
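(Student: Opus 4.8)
The plan is to translate the conclusion into a statement about which pairs of edges at $x$ are \emph{liftable without harm}, and then to locate an admissible cyclically consecutive pair by an extremal argument.

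First I would record how a single lift affects cuts. Fix a pair $e_i=xy_i$, $e_{i+1}=xy_{i+1}$ with $y_i\neq y_{i+1}$ (this distinctness is needed for the lift to be defined, since loops are forbidden) and write $G'=G(e_i,e_{i+1})$. For every $S\subseteq V(G)\setminus\{x\}$ one checks directly that $\vert\partial_{G'}(S)\vert=\vert\partial_G(S)\vert$, \emph{unless} both $y_i,y_{i+1}\in S$, in which case $\vert\partial_{G'}(S)\vert=\vert\partial_G(S)\vert-2$. Combined with Observation \ref{Observation-same-parity}, every odd $S\subseteq V(G)\setminus\{x\}$ has $\vert\partial_G(S)\vert\equiv r \pmod 2$ and, by hypothesis, $\vert\partial_G(S)\vert\in\{r,r+2,\dots\}$; hence the lift $G'$ violates the odd-cut condition only if some \emph{tight} odd set (one with $\vert\partial_G(S)\vert=r$) contains both $y_i$ and $y_{i+1}$. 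Since $G$ is connected, $G'$ is disconnected only if some $S\ni y_i,y_{i+1}$ with $x\notin S$ satisfies $\partial_G(S)=\{e_i,e_{i+1}\}$, i.e.\ $\vert\partial_G(S)\vert=2$ (possible only when $d_G(x)\ge 3$). I call such sets \emph{obstructing}, and a pair $(i,i+1)$ \emph{good} if $y_i\neq y_{i+1}$ and no obstructing set contains both endpoints; the lemma asks for a good cyclically consecutive pair.

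The key structural fact I would isolate is that \emph{no single obstructing set $A$ contains every neighbour of $x$}. Indeed, all $d_G(x)$ edges at $x$ would then lie in $\partial_G(A)$, forcing $d_G(x)\le\vert\partial_G(A)\vert$. If $A$ is of the $2$-cut type this gives $d_G(x)\le 2$, hence $d_G(x)=2$, but then there is no connectivity obstruction at all, so $A$ is not obstructing. Thus $A$ is tight odd with $d_G(x)\le r$. If $\vert V(G)\vert$ is even, then $V(G)\setminus\{x\}$ is odd, so $d_G(x)=\vert\partial_G(V(G)\setminus\{x\})\vert\ge r$ and (by the second hypothesis and equal parity) $d_G(x)\ge r+2$, contradicting $d_G(x)\le r$. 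If $\vert V(G)\vert$ is odd, set $B=A\cup\{x\}$; then $\vert\partial_G(B)\vert=\vert\partial_G(A)\vert-d_G(x)=r-d_G(x)$, while $B^{c}$ is a nonempty odd set avoiding $x$ with the same cut, so $r-d_G(x)\ge r$, again a contradiction. In particular, applying this with a singleton $\{y\}$ (which is tight, as $\vert\partial_G(\{y\})\vert=r$) shows the neighbours of $x$ are not all equal, so some cyclically consecutive pair has distinct endpoints; and for $d_G(x)=2$ the unique pair is already good, so I may assume $d_G(x)\ge 3$.

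The main step is an extremal/uncrossing argument. Assume for contradiction that every cyclically consecutive pair is bad. Choose an obstructing set $A$ containing a maximum number of neighbour-slots of $x$; by the previous paragraph $A$ misses at least one slot, so around the cycle there is a boundary index $i$ with $y_i\in A$, $y_{i+1}\notin A$ (in particular $y_i\neq y_{i+1}$). As $(i,i+1)$ is bad, there is an obstructing set $T$ with $y_i,y_{i+1}\in T$, so $A$ and $T$ cross. Using $\vert\partial_G(A\cap T)\vert+\vert\partial_G(A\cup T)\vert\le\vert\partial_G(A)\vert+\vert\partial_G(T)\vert$ together with the parity fact, when $\vert A\cap T\vert$ is odd both $A\cap T$ and $A\cup T$ are again tight, so $A\cup T$ is an obstructing set containing strictly more neighbour-slots than $A$ (it acquires slot $i+1$), contradicting maximality. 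This closes the odd case and would finish the proof. The hard part is precisely the remaining parity case: when $\vert A\cap T\vert$ is even, submodularity only yields that $A\setminus T$ and $T\setminus A$ are tight, and $A\cup T$ need not be obstructing, so the clean enlargement fails. I expect to resolve this by choosing $A$ and $T$ inclusion-minimal among obstructing sets containing the relevant pair and by folding the $2$-edge-cut obstructions into the same uncrossing, so that the even-intersection case either contradicts minimality or exhibits a strictly smaller obstructing set; an alternative is to contract an inclusion-minimal tight odd set meeting at least two neighbours of $x$ and induct on $\vert V(G)\vert$, Observation \ref{Observation-same-parity} guaranteeing that contraction preserves all relevant parities and cut sizes. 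Handling this even-intersection case cleanly is the crux of the whole argument.
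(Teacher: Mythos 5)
Your reduction of the problem to finding a ``good'' cyclically consecutive pair, and your classification of the obstructions (tight odd sets containing both endpoints, plus $2$-edge-cuts of the form $\{e_i,e_{i+1}\}$ for connectivity), are sound and match the structure of the difficulty. But the proposal has a genuine gap, and you name it yourself: in the uncrossing step, when $\vert A\cap T\vert$ is even, submodularity only gives that $A\setminus T$ and $T\setminus A$ are tight, and neither of these contains both $y_i$ and $y_{i+1}$, so no larger obstructing set is produced and the maximality of $A$ is not contradicted. Since this case cannot be excluded, the extremal argument as written does not close, and the two proposed repairs (inclusion-minimal choices of $A$ and $T$; folding the $2$-cut obstructions into the uncrossing) are only sketched, not carried out. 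As it stands the proof is incomplete.

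The paper's proof takes the route you mention only as an ``alternative'': a minimal counterexample $G$, a deliberately chosen first pair ($y_i\neq y_{i+1}$, and in different components of $G-x$ if $G-x$ is disconnected, which together with bridgelessness of $G$ handles connectivity once and for all), and then --- when that lift fails --- a tight set $T\ni y_i,y_{i+1}$ which is \emph{contracted}. Minimality yields an index $j$ that works for $G/T$, and a submodularity computation (splitting an arbitrary odd $S\subseteq V(G_3)\setminus\{x\}$ into $A=S\cap T$ and $B=S\setminus T$, with separate parity cases for $\vert A\vert$) transfers the cut bound from $G/T$ and $G$ back to $G(e_j,e_{j+1})$. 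Note that the index produced is $j$, not the original $i$; the lemma only asserts existence of \emph{some} consecutive pair, which is what makes the induction viable. If you want to salvage your approach, the cleanest fix is exactly this contraction-and-induction argument; I would recommend abandoning the single-graph uncrossing and working out that version in full.
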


\begin{proof} We argue by contradiction. Let $G$ be a possible counterexample of smallest order, let $d=d_{G}(x)$, and let $e_i=xy_i$ for every $i \in \{1,\ldots, d\}$.
	
	First we show $\vert N_{G}(x) \vert \geq 2$. Suppose that $x$ has just one neighbor $x'$. Note that $d_{G}(x')=r$ by our assumptions. If $\vert V(G) \vert$ is even, then $d_{G}(x) \neq r$. As a consequence, the set $S=V(G) \setminus \{x\}$ is a set of odd cardinality with $\vert \partial_{G}(S) \vert = d_{G}(x) <r$, a contradiction. If $\vert V(G) \vert$ is odd, then the set $S=V(G) \setminus \{x, x'\}$ is a set of odd cardinality with $\vert \partial_{G}(S) \vert = r - d_{G}(x)<r$, a contradiction again. Therefore, $\vert N_{G}(x) \vert \geq 2$.
	
	Hence, we can choose an $i \in \mathbb{Z}_d$ such that $y_i \neq y_{i+1}$ and, if $G-x$ is not connected, then $y_i$ and $y_{i+1}$ belong to different components of $G-x$. Suppose that $G$ has a bridge $e$. Then, for parity reasons, the component $H$ of $G-e$ not containing $x$ is of odd order, a contradiction since $|\partial_{G}(V(H))| =1< r$. Thus, $G$ is bridgeless and hence, the graph $G(e_i,e_{i+1})$ is connected by the choice of $i$. As a consequence, there is a set $T \subseteq V(G(e_i,e_{i+1}))\setminus \{x\}$ of odd cardinality with $\vert \partial_{G(e_i,e_{i+1})}(T)\vert <r$, since $G$ is a counterexample. Observe that $ \vert \partial_{G}(T) \vert$ has the same parity as $ r $, which implies $\vert \partial_{G}(T) \vert =r$ and $y_i,y_{i+1} \in T$. Set $G_1=G/T$ and label the edges of $\partial_{G_1}(x)$ with the same labels as in $G$. Then, $G_1$ and $x$ satisfy the conditions of the statement. Therefore, by the minimality of $\vert V(G) \vert$, there is an integer $j \in \mathbb{Z}_d$ such that the graph $G_2=G_1(e_j,e_{j+1})$ satisfies $\vert \partial_{G_2}(S) \vert \geq r$ for every $S\subseteq V(G_2)\setminus\{x\}$ of odd cardinality. Set $G_3=G(e_j,e_{j+1})$. The graphs $G, G_1, G_2$ and $G_3$ are depicted in Figure~\ref{fig:lifting_Lemma}.

\begin{figure}[htbp]
\centering
\subfigure[$G$]{
\begin{minipage}[t]{0.225\textwidth}
\centering
\begingroup%
  \makeatletter%
  \providecommand\color[2][]{%
    \errmessage{(Inkscape) Color is used for the text in Inkscape, but the package 'color.sty' is not loaded}%
    \renewcommand\color[2][]{}%
  }%
  \providecommand\transparent[1]{%
    \errmessage{(Inkscape) Transparency is used (non-zero) for the text in Inkscape, but the package 'transparent.sty' is not loaded}%
    \renewcommand\transparent[1]{}%
  }%
  \providecommand\rotatebox[2]{#2}%
  \newcommand*\fsize{\dimexpr\f@size pt\relax}%
  \newcommand*\lineheight[1]{\fontsize{\fsize}{#1\fsize}\selectfont}%
  \ifx\svgwidth\undefined%
    \setlength{\unitlength}{91.08550978bp}%
    \ifx\svgscale\undefined%
      \relax%
    \else%
      \setlength{\unitlength}{\unitlength * \real{\svgscale}}%
    \fi%
  \else%
    \setlength{\unitlength}{\svgwidth}%
  \fi%
  \global\let\svgwidth\undefined%
  \global\let\svgscale\undefined%
  \makeatother%
  \begin{picture}(1,1.05002541)%
    \lineheight{1}%
    \setlength\tabcolsep{0pt}%
    \put(0,0){\includegraphics[width=\unitlength,page=1]{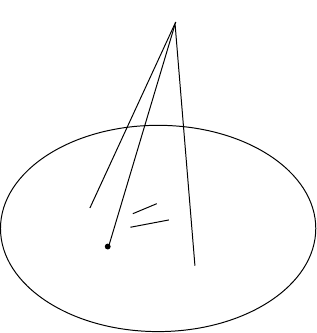}}%
    \put(0.51855912,1.01842225){\makebox(0,0)[lt]{\lineheight{1.25}\smash{\begin{tabular}[t]{l}$x$\end{tabular}}}}%
    \put(0,0){\includegraphics[width=\unitlength,page=2]{G0_v2.pdf}}%
    \put(0.15620724,0.38366753){\makebox(0,0)[lt]{\lineheight{1.25}\smash{\begin{tabular}[t]{l}$y_i$\end{tabular}}}}%
    \put(0.11150802,0.26454615){\makebox(0,0)[lt]{\lineheight{1.25}\smash{\begin{tabular}[t]{l}$y_{i+1}$\end{tabular}}}}%
    \put(0,0){\includegraphics[width=\unitlength,page=3]{G0_v2.pdf}}%
    \put(0.35186263,0.06734282){\makebox(0,0)[lt]{\lineheight{1.25}\smash{\begin{tabular}[t]{l}$T$\end{tabular}}}}%
  \end{picture}%
\endgroup%

\end{minipage}
}
\subfigure[$G_1$]{
\begin{minipage}[t]{0.225\textwidth}
\centering
\begingroup%
  \makeatletter%
  \providecommand\color[2][]{%
    \errmessage{(Inkscape) Color is used for the text in Inkscape, but the package 'color.sty' is not loaded}%
    \renewcommand\color[2][]{}%
  }%
  \providecommand\transparent[1]{%
    \errmessage{(Inkscape) Transparency is used (non-zero) for the text in Inkscape, but the package 'transparent.sty' is not loaded}%
    \renewcommand\transparent[1]{}%
  }%
  \providecommand\rotatebox[2]{#2}%
  \newcommand*\fsize{\dimexpr\f@size pt\relax}%
  \newcommand*\lineheight[1]{\fontsize{\fsize}{#1\fsize}\selectfont}%
  \ifx\svgwidth\undefined%
    \setlength{\unitlength}{91.08550978bp}%
    \ifx\svgscale\undefined%
      \relax%
    \else%
      \setlength{\unitlength}{\unitlength * \real{\svgscale}}%
    \fi%
  \else%
    \setlength{\unitlength}{\svgwidth}%
  \fi%
  \global\let\svgwidth\undefined%
  \global\let\svgscale\undefined%
  \makeatother%
  \begin{picture}(1,1.05928159)%
    \lineheight{1}%
    \setlength\tabcolsep{0pt}%
    \put(0.50182419,1.02767842){\makebox(0,0)[lt]{\lineheight{1.25}\smash{\begin{tabular}[t]{l}$x$\end{tabular}}}}%
    \put(0,0){\includegraphics[width=\unitlength,page=1]{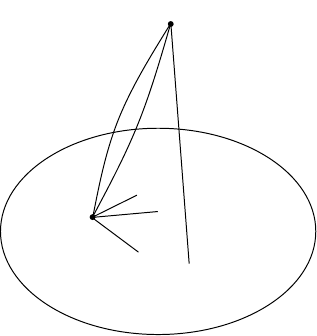}}%
    \put(0.11372328,0.33188428){\makebox(0,0)[lt]{\lineheight{1.25}\smash{\begin{tabular}[t]{l}$w_{T}$\end{tabular}}}}%
    \put(0,0){\includegraphics[width=\unitlength,page=2]{G1_v2.pdf}}%
  \end{picture}%
\endgroup%

\end{minipage}
}
\subfigure[$G_2$]{
\begin{minipage}[t]{0.225\textwidth}
\centering
\begingroup%
  \makeatletter%
  \providecommand\color[2][]{%
    \errmessage{(Inkscape) Color is used for the text in Inkscape, but the package 'color.sty' is not loaded}%
    \renewcommand\color[2][]{}%
  }%
  \providecommand\transparent[1]{%
    \errmessage{(Inkscape) Transparency is used (non-zero) for the text in Inkscape, but the package 'transparent.sty' is not loaded}%
    \renewcommand\transparent[1]{}%
  }%
  \providecommand\rotatebox[2]{#2}%
  \newcommand*\fsize{\dimexpr\f@size pt\relax}%
  \newcommand*\lineheight[1]{\fontsize{\fsize}{#1\fsize}\selectfont}%
  \ifx\svgwidth\undefined%
    \setlength{\unitlength}{91.08550978bp}%
    \ifx\svgscale\undefined%
      \relax%
    \else%
      \setlength{\unitlength}{\unitlength * \real{\svgscale}}%
    \fi%
  \else%
    \setlength{\unitlength}{\svgwidth}%
  \fi%
  \global\let\svgwidth\undefined%
  \global\let\svgscale\undefined%
  \makeatother%
  \begin{picture}(1,1.05928147)%
    \lineheight{1}%
    \setlength\tabcolsep{0pt}%
    \put(0.50182395,1.0276783){\makebox(0,0)[lt]{\lineheight{1.25}\smash{\begin{tabular}[t]{l}$x$\end{tabular}}}}%
    \put(0,0){\includegraphics[width=\unitlength,page=1]{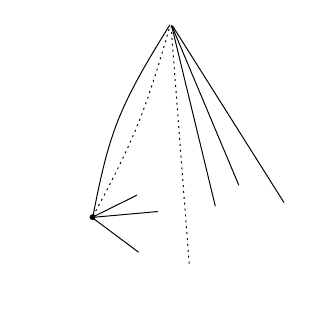}}%
    \put(0.11362381,0.33208638){\makebox(0,0)[lt]{\lineheight{1.25}\smash{\begin{tabular}[t]{l}$w_{T}$\end{tabular}}}}%
    \put(0,0){\includegraphics[width=\unitlength,page=2]{G2_v2.pdf}}%
    \put(0.6165894,0.27189538){\makebox(0,0)[lt]{\lineheight{1.25}\smash{\begin{tabular}[t]{l}$e_{j+1}$\end{tabular}}}}%
    \put(0.41546524,0.54709114){\makebox(0,0)[lt]{\lineheight{1.25}\smash{\begin{tabular}[t]{l}$e_j$\end{tabular}}}}%
  \end{picture}%
\endgroup%

\end{minipage}
}
\subfigure[$G_3$]{
\begin{minipage}[t]{0.225\textwidth}
\centering
\begingroup%
  \makeatletter%
  \providecommand\color[2][]{%
    \errmessage{(Inkscape) Color is used for the text in Inkscape, but the package 'color.sty' is not loaded}%
    \renewcommand\color[2][]{}%
  }%
  \providecommand\transparent[1]{%
    \errmessage{(Inkscape) Transparency is used (non-zero) for the text in Inkscape, but the package 'transparent.sty' is not loaded}%
    \renewcommand\transparent[1]{}%
  }%
  \providecommand\rotatebox[2]{#2}%
  \newcommand*\fsize{\dimexpr\f@size pt\relax}%
  \newcommand*\lineheight[1]{\fontsize{\fsize}{#1\fsize}\selectfont}%
  \ifx\svgwidth\undefined%
    \setlength{\unitlength}{91.08550978bp}%
    \ifx\svgscale\undefined%
      \relax%
    \else%
      \setlength{\unitlength}{\unitlength * \real{\svgscale}}%
    \fi%
  \else%
    \setlength{\unitlength}{\svgwidth}%
  \fi%
  \global\let\svgwidth\undefined%
  \global\let\svgscale\undefined%
  \makeatother%
  \begin{picture}(1,1.05002541)%
    \lineheight{1}%
    \setlength\tabcolsep{0pt}%
    \put(0,0){\includegraphics[width=\unitlength,page=1]{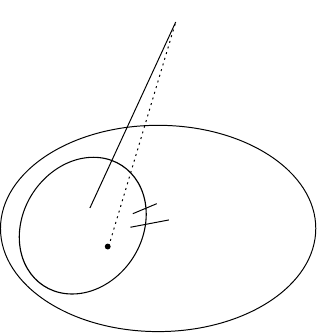}}%
    \put(0.15588433,0.38345075){\makebox(0,0)[lt]{\lineheight{1.25}\smash{\begin{tabular}[t]{l}$y_i$\end{tabular}}}}%
    \put(0.11118512,0.26432937){\makebox(0,0)[lt]{\lineheight{1.25}\smash{\begin{tabular}[t]{l}$y_{i+1}$\end{tabular}}}}%
    \put(0.51855912,1.01842225){\makebox(0,0)[lt]{\lineheight{1.25}\smash{\begin{tabular}[t]{l}$x$\end{tabular}}}}%
    \put(0,0){\includegraphics[width=\unitlength,page=2]{G3_v2.pdf}}%
    \put(0.35153972,0.06712605){\makebox(0,0)[lt]{\lineheight{1.25}\smash{\begin{tabular}[t]{l}$T$\end{tabular}}}}%
    \put(0,0){\includegraphics[width=\unitlength,page=3]{G3_v2.pdf}}%
    \put(0.62989914,0.26396847){\makebox(0,0)[lt]{\lineheight{1.25}\smash{\begin{tabular}[t]{l}$e_{j+1}$\end{tabular}}}}%
    \put(0.44524292,0.53916423){\makebox(0,0)[lt]{\lineheight{1.25}\smash{\begin{tabular}[t]{l}$e_j$\end{tabular}}}}%
  \end{picture}%
\endgroup%

\end{minipage}
}
\caption{An example for the graphs $G, G_1, G_2$ and $G_3$.}
\label{fig:lifting_Lemma}
\end{figure}
	
	Note that $V(G)=V(G_3)$ and $V(G_2)\setminus \{w_{T}\} = V(G_3) \setminus T$. Furthermore, we observe the following:
	\begin{itemize}
		\item for every $X \subseteq T$: $\vert \partial_{G}(X) \vert =  \vert \partial_{G_3}(X) \vert$,
		\item for every $X \subseteq V(G_2)\setminus \{w_{T}\}$: $\vert \partial_{G_2}(X) \vert = \vert\partial_{G_3}(X)\vert$ and $\vert\partial_{G_2}(X \cup \{w_{T}\})\vert=\vert\partial_{G_3}(X \cup T)\vert$.
	\end{itemize}
	
	Now, let $S \subseteq V(G_3)\setminus \{x\}$ be a set of odd cardinality. Set $A = S \cap T$ and $B=S \setminus A$. We consider two cases.
	
	{\bf Case 1.}\ $\vert A \vert$ is even.
	
	As a consequence, $B$ and $T \setminus A$ are sets of odd cardinality. Therefore, by using the above observations we obtain the following:
	\begin{align*}
	\vert \partial_{G_3}(S) \vert = \vert \partial_{G_3} (S^c) \vert
	&\geq \vert \partial_{G_3}(S^c\cap T) \vert + \vert \partial_{G_3}(S^c \cup T) \vert - \vert \partial_{G_3}(T) \vert\\
	&= \vert \partial_{G_3}(T \setminus A) \vert + \vert \partial_{G_3}(B) \vert - \vert \partial_{G_3}(T) \vert\\
	&=\vert \partial_{G}(T \setminus A) \vert + \vert \partial_{G_2}(B) \vert - \vert \partial_{G}(T) \vert\\
	&\geq r + r - r\\
	&=r.
	\end{align*}

	{\bf Case 2.}\ $\vert A \vert$ is odd.
	
	Thus, $B$ is a set of even cardinality, which implies
	\begin{align*}
	\vert \partial_{G_3}(S) \vert
	&\geq \vert \partial_{G_3}(S\cap T) \vert + \vert \partial_{G_3}(S \cup T) \vert - \vert \partial_{G_3}(T) \vert\\
	&= \vert \partial_{G_3}(A) \vert + \vert \partial_{G_3}(B \cup T) \vert - \vert \partial_{G_3}(T) \vert\\
	&=\vert \partial_{G}(A) \vert + \vert \partial_{G_2}(B \cup \{w_{T}\}) \vert - \vert \partial_{G}(T) \vert\\
	&\geq r + r - r\\
	&=r.
	\end{align*}
	In any case, we have $\vert \partial_{G_3}(S) \vert \geq r$, which implies $ \vert \partial_{G(e_j,e_{j+1})}(S') \vert\geq r $ for every $ S'\subseteq V(G(e_j,e_{j+1}))\setminus\{x\}$ of odd cardinality. This is a  contradiction to the assumption that $G$ is a counterexample.	
\end{proof}

The previous lemma can be used in $r$-graphs as follows.

\begin{theo}
	\label{theo:r-graph_lifting}
	Let $r\geq 2$ be an integer, let $G$ be a connected $r$-graph and let $X$ be a non-empty proper subset of $V(G)$. If $\vert X \vert$ is even, then $G/X$ can be transformed into a connected $r$-graph by  applying $\frac{1}{2}\left \vert \partial_G(X)\right|$ lifting operations at $w_X$. If $\vert X \vert$ is odd, then $G/X$ can be transformed into a connected $r$-graph by applying $\frac{1}{2}\left( \vert \partial_G(X) \vert - r \right)$ lifting operations at $w_X$.
\end{theo}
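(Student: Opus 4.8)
The plan is to prove the statement by iterating Lemma \ref{Lem:r-graph lifting} with $x=w_X$, peeling off two edges at $w_X$ per lifting while preserving the three hypotheses of that lemma. First I would record the global parity fact that every $r$-graph has \emph{even} order: taking $X=V(G)$ in the defining inequality gives $\vert\partial_G(V(G))\vert=0$, so an odd vertex set $V(G)$ would violate the $r$-graph condition for $r\geq2$; hence $\vert V(G)\vert$ is even. This is the single fact that makes all the bookkeeping below consistent.

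Next I would set $H_0=G/X$ with distinguished vertex $w_X$ and check that $H_0$ is a legitimate input for Lemma \ref{Lem:r-graph lifting}. Connectivity of $H_0$ follows from that of $G$, and $\vert V(H_0)\vert=\vert V(G)\vert-\vert X\vert+1\geq 2$ since $X$ is proper. Every $v\neq w_X$ keeps degree $r$ under contraction, and $d_{H_0}(w_X)=\vert\partial_G(X)\vert$. For any odd $S\subseteq V(H_0)\setminus\{w_X\}$ one has $\partial_{H_0}(S)=\partial_G(S)$, so $\vert\partial_{H_0}(S)\vert\geq r$ because $G$ is an $r$-graph; this is the third hypothesis of Lemma \ref{Lem:r-graph lifting}. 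The only hypothesis needing care is the parity condition ``if $\vert V\vert$ is even then $d(w_X)\neq r$'', and this is exactly where the even-order fact and the case split enter.

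Then I would run the induction, keeping $w_X$ fixed and using that a lifting leaves every $v\neq w_X$ of degree $r$, drops $d(w_X)$ by $2$, preserves connectivity, and (by the conclusion of Lemma \ref{Lem:r-graph lifting}) preserves the odd-cut bound for sets avoiding $w_X$; moreover $\vert V(H_k)\vert$ stays constant while $w_X$ survives. In the even case $\vert X\vert$ is even, so $\vert V(H_k)\vert=\vert V(G)\vert-\vert X\vert+1$ is odd and the parity hypothesis is vacuous at every step; thus I keep lifting until $d(w_X)=0$, which by Observation \ref{Observation-same-parity} happens after exactly $\frac12\vert\partial_G(X)\vert$ liftings and deletes $w_X$. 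In the odd case $\vert V(H_k)\vert$ is even, while $d(w_X)=\vert\partial_G(X)\vert-2k$ decreases through values strictly larger than $r$ (so the parity hypothesis holds) until it reaches $r$ after exactly $\frac12(\vert\partial_G(X)\vert-r)$ liftings, where I stop; here $\vert\partial_G(X)\vert\geq r$ because $X$ is odd and $\vert\partial_G(X)\vert-r$ is even by Observation \ref{Observation-same-parity}, so this number is a nonnegative integer, and the degenerate subcase $\vert\partial_G(X)\vert=r$ simply requires no lifting.

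Finally I would argue that the resulting graph $G'$ is an $r$-graph; it is $r$-regular and connected by construction. In the even case $w_X$ has been deleted, so every odd set avoids $w_X$ and the odd-cut bound from the last application of Lemma \ref{Lem:r-graph lifting} is exactly the $r$-graph condition. In the odd case $G'$ has even order; an odd set $S$ avoiding $w_X$ is handled directly by that bound, whereas for an odd $S$ containing $w_X$ the complement $S^c$ is odd and avoids $w_X$, so $\vert\partial_{G'}(S)\vert=\vert\partial_{G'}(S^c)\vert\geq r$. The main obstacle, and the part deserving the most care, is precisely this invariant maintenance: verifying that the parity hypothesis of Lemma \ref{Lem:r-graph lifting} never fails at an intermediate step (which is why the even-order fact and the case split are needed) and reducing odd cuts through $w_X$ to odd cuts avoiding it by complementation.
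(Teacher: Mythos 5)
Your proposal is correct and follows exactly the paper's approach: the paper's proof consists of repeatedly applying Lemma \ref{Lem:r-graph lifting} to $G/X$ at $w_X$, and your write-up simply makes explicit the invariant maintenance (even order of $r$-graphs, the parity of $\vert V(G/X)\vert$ in the two cases, the degree count $d(w_X)=\vert\partial_G(X)\vert-2k$, and the complementation argument for odd sets containing $w_X$) that the paper leaves implicit. No gaps.
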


\begin{proof}


Consider any labeling of $\partial_{G/X}(w_X)$. The statement follows by applying repeatedly Lemma \ref{Lem:r-graph lifting} to  $G/X$ at $w_X$. Note that $w_X$ is removed in the last step when $\vert X \vert$ is even.
\end{proof}

Note that the previous lifting operations can be applied so that they preserve embeddings of graphs in surfaces.

\subsection{Characterization of $\ca H_r$}

Let $ f$ be an $ H $-coloring of $ G $.
The subgraph of $H$ induced by the edge set $Im(f)$ is denoted by $H_f$. Observe that $H_f$ also colors $G$. Furthermore, if $H$ has no two vertices $u_1,u_2$ with $\partial_H(u_1)=\partial_H(u_2)$, then $f$ induces a mapping $f_V\colon V(G) \to V(H)$, where every $v \in V(G)$ is mapped to the unique vertex $u \in V(H)$ with $f(\partial_G(v))=\partial_H(u)$. Note that $f_V$ is well defined if $H$ is a connected graph with $|V(H)|>2$. A vertex of $V(H)\setminus Im(f_V)$ is called \emph{unused}.


%

\begin{theo}
\label{theo:coloring_graphs_in_S(r,k)_generalisation}
Let $r \geq 3$ and let $G$ be an $r$-graph of class 2 that cannot be colored by an $r$-graph of smaller order. If $H$ is a connected $r$-graph and $f$ is an $H$-coloring of $G$, then $(f_V,f)$ is an isomorphism, i.e. $H \cong G$.
\end{theo}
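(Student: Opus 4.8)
The plan is to show that $(f_V,f)$ is an isomorphism by proving that $f_V$ is a bijection $V(G)\to V(H)$ and that $f$ restricts to a bijection $E(G)\to E(H)$ compatible with $f_V$. First I would check that $f_V$ is well defined. Since $G$ is class $2$, Claim \ref{Lemma-order10-r-2-PDPM} gives $|V(G)|\geq 10$; as $H$ is an $r$-graph coloring $G$, the hypothesis that $G$ cannot be colored by an $r$-graph of smaller order yields $|V(H)|\geq |V(G)|\geq 10$. A connected graph on more than two vertices has no two vertices with the same incidence set, so $f_V\colon V(G)\to V(H)$ is well defined. Moreover, for every $v\in V(G)$ the $r$ pairwise adjacent edges in $\partial_G(v)$ are sent to $r$ distinct edges, so $f$ maps $\partial_G(v)$ bijectively onto $\partial_H(f_V(v))$.

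The heart of the argument is to show that $f_V$ is surjective, i.e.\ that $H$ has no \emph{unused} vertex. For an edge $e=vw$ of $G$, the image $f(e)$ lies in $f(\partial_G(v))=\partial_H(f_V(v))$ and in $f(\partial_G(w))=\partial_H(f_V(w))$, so both $f_V(v)$ and $f_V(w)$ are endpoints of $f(e)$. Since $H$ has no loops this forces $f_V(v)\neq f_V(w)$ and identifies $f(e)$ as an edge joining $f_V(v)$ to $f_V(w)$. Consequently both endpoints of every edge in $Im(f)$ are used, whereas every used vertex $f_V(v)$ has all $r$ of its incident edges in $Im(f)$. Hence no edge of $H$ joins a used vertex to an unused one, so the used vertices form a union of connected components of $H$. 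As $H$ is connected and $Im(f_V)$ is non-empty, every vertex of $H$ is used and $f_V$ is surjective.

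With surjectivity established, the order comparison $|V(H)|\geq |V(G)|$ from the minimality hypothesis combines with $|V(H)|=|Im(f_V)|\leq|V(G)|$ to give $|V(H)|=|V(G)|$; thus $f_V$ is a bijection. Counting edges, $|E(G)|=\tfrac{r}{2}|V(G)|=\tfrac{r}{2}|V(H)|=|E(H)|$, and since every edge of $H$ lies in $Im(f)$ (each vertex being used), $f$ is a surjection between edge sets of equal cardinality, hence a bijection. Finally, the relation $f(vw)=f_V(v)f_V(w)$ proved above shows that $f$ and $f_V$ preserve incidences in both directions, so $(f_V,f)$ is an isomorphism and $H\cong G$.

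I expect the decisive step to be the surjectivity of $f_V$ in the second paragraph, namely ruling out unused vertices of $H$: once one knows that an image edge can only run between two used vertices, the no-loops property together with the connectivity of $H$ propagates ``usedness'' to the whole graph, and the remaining steps are bookkeeping. It is worth noting that the minimality of $G$ enters only through the elementary inequality $|V(H)|\geq |V(G)|$, and that the lifting results (Theorem \ref{theo:r-graph_lifting}) are not needed for this statement.
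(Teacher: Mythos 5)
There is a genuine gap at exactly the step you single out as decisive. From $f(e)\in f(\partial_G(v))\cap f(\partial_G(w))=\partial_H(f_V(v))\cap\partial_H(f_V(w))$ you may only conclude that \emph{both} $f_V(v)$ and $f_V(w)$ are endpoints of $f(e)$; nothing forces them to be \emph{distinct} endpoints. The absence of loops in $H$ is irrelevant here: if $f_V(v)=f_V(w)=u$, then $f(e)$ is simply some edge of $\partial_H(u)$ whose other endpoint is a different vertex, and no loop arises anywhere. This situation is perfectly consistent with the definition of an $H$-coloring (edges at $v$ other than $e$ and edges at $w$ other than $e$ are not adjacent in $G$, so they may share images); for instance, in any $H$-coloring of a class~1 graph obtained by sending the $i$-th perfect matching to the $i$-th edge at a fixed vertex of $H$, \emph{all} vertices of $G$ have the same $f_V$-image. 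Consequently your assertion that every edge of $Im(f)$ joins two used vertices fails, and with it the claim that the used vertices form a union of components of $H$: an edge $f(e)$ with $f_V(v)=f_V(w)=u$ can very well join the used vertex $u$ to an unused vertex.

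What is actually needed to exclude $f_V(v)=f_V(w)$ is the injectivity of $f$ (if $f_V(v)=f_V(w)=u$, then $f$ maps the $2r-1$ edges of $\partial_G(v)\cup\partial_G(w)$ into the $r$ edges of $\partial_H(u)$), and injectivity is the hard part of the paper's proof. It cannot be obtained from the minimality hypothesis by pure counting, because the subgraph $H_f$ induced by $Im(f)$, while smaller than $G$ in edges when $f$ is not injective, need not be an $r$-graph (the unused vertices have deficient degree), so the hypothesis that $G$ is not colored by a \emph{smaller $r$-graph} does not apply to it directly. The paper repairs this by contracting the set $U$ of unused vertices and applying the lifting machinery (Theorem~\ref{theo:r-graph_lifting}) to produce a genuine $r$-graph $H'$ with $H'\prec G$ and $|E(H')|\le|E(H_f)|<|E(G)|$, contradicting minimality. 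So your closing remark that the lifting results are not needed is precisely where the proposal goes wrong; the rest of your argument (surjectivity of $f_V$ given injectivity of $f$, the order count, and the incidence bookkeeping) matches the paper's Claims~2 and~3.
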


\begin{proof}
Let $f\colon E(G) \to E(H)$ be an $H$-coloring of $G$. Note, that since $G$ is class 2, $H$ is also class 2 and therefore, $f_V$ is well defined. 
We first prove three claims.

\begin{claim}\label{claim:f_injective}
		$f$ is injective.
	\end{claim}
	
\emph{Proof of Claim \ref{claim:f_injective}.}
Suppose to the contrary that $f$ is not injective, which implies $|E(H_f)|<|E(G)|$. If $H$ contains no unused vertices, then $|E(H)|=|E(H_f)|<|E(G)|$, which contradicts the assumption that $G$ cannot be colored by an $r$-graph of smaller order. Thus, $H$ contains unused vertices; let $U \subseteq V(H)$ be the set of them. Transform the graph $H/U$ into a new $r$-graph $H'$ as follows. If $\vert U \vert$ is even, then apply $\frac{1}{2}\left \vert \partial_H(U)\right|$ lifting operations at $w_U$ (see Figure~\ref{fig:U_even}). If $\vert U \vert$ is odd, then apply $\frac{1}{2}\left( \vert \partial_H(U) \vert - r \right)$ lifting operations at $w_U$ (see Figure~\ref{fig:U_odd}). By Theorem~\ref{theo:r-graph_lifting}, this can be done in such a way that the resulting graph $H'$ is indeed an $r$-graph.

\begin{figure}[htbp]
\centering
\subfigure[$H$]{
\begin{minipage}[t]{0.3\textwidth}
\centering
\begingroup%
  \makeatletter%
  \providecommand\color[2][]{%
    \errmessage{(Inkscape) Color is used for the text in Inkscape, but the package 'color.sty' is not loaded}%
    \renewcommand\color[2][]{}%
  }%
  \providecommand\transparent[1]{%
    \errmessage{(Inkscape) Transparency is used (non-zero) for the text in Inkscape, but the package 'transparent.sty' is not loaded}%
    \renewcommand\transparent[1]{}%
  }%
  \providecommand\rotatebox[2]{#2}%
  \newcommand*\fsize{\dimexpr\f@size pt\relax}%
  \newcommand*\lineheight[1]{\fontsize{\fsize}{#1\fsize}\selectfont}%
  \ifx\svgwidth\undefined%
    \setlength{\unitlength}{83.05914758bp}%
    \ifx\svgscale\undefined%
      \relax%
    \else%
      \setlength{\unitlength}{\unitlength * \real{\svgscale}}%
    \fi%
  \else%
    \setlength{\unitlength}{\svgwidth}%
  \fi%
  \global\let\svgwidth\undefined%
  \global\let\svgscale\undefined%
  \makeatother%
  \begin{picture}(1,0.89001817)%
    \lineheight{1}%
    \setlength\tabcolsep{0pt}%
    \put(0,0){\includegraphics[width=\unitlength,page=1]{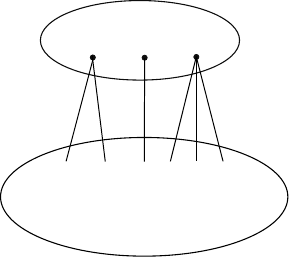}}%
    \put(0.39979654,0.75729389){\color[rgb]{0,0,0}\makebox(0,0)[lt]{\lineheight{0.46875152}\smash{\begin{tabular}[t]{l}$U$\end{tabular}}}}%
    \put(0.27832737,0.14507484){\color[rgb]{0,0,0}\makebox(0,0)[lt]{\lineheight{0.46875152}\smash{\begin{tabular}[t]{l}$Im(f_V)$\end{tabular}}}}%
  \end{picture}%
\endgroup%

\end{minipage}
}
\subfigure[$H/U$]{
\begin{minipage}[t]{0.3\textwidth}
\centering
\begingroup%
  \makeatletter%
  \providecommand\color[2][]{%
    \errmessage{(Inkscape) Color is used for the text in Inkscape, but the package 'color.sty' is not loaded}%
    \renewcommand\color[2][]{}%
  }%
  \providecommand\transparent[1]{%
    \errmessage{(Inkscape) Transparency is used (non-zero) for the text in Inkscape, but the package 'transparent.sty' is not loaded}%
    \renewcommand\transparent[1]{}%
  }%
  \providecommand\rotatebox[2]{#2}%
  \newcommand*\fsize{\dimexpr\f@size pt\relax}%
  \newcommand*\lineheight[1]{\fontsize{\fsize}{#1\fsize}\selectfont}%
  \ifx\svgwidth\undefined%
    \setlength{\unitlength}{83.05914758bp}%
    \ifx\svgscale\undefined%
      \relax%
    \else%
      \setlength{\unitlength}{\unitlength * \real{\svgscale}}%
    \fi%
  \else%
    \setlength{\unitlength}{\svgwidth}%
  \fi%
  \global\let\svgwidth\undefined%
  \global\let\svgscale\undefined%
  \makeatother%
  \begin{picture}(1,0.95159342)%
    \lineheight{1}%
    \setlength\tabcolsep{0pt}%
    \put(0,0){\includegraphics[width=\unitlength,page=1]{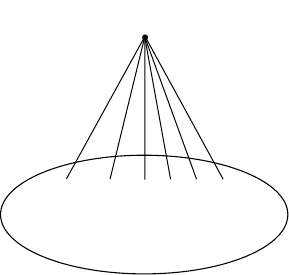}}%
    \put(0.39313846,0.8784152){\color[rgb]{0,0,0}\makebox(0,0)[lt]{\lineheight{0.46875152}\smash{\begin{tabular}[t]{l}$w_U$\end{tabular}}}}%
    \put(0.27832737,0.14507484){\color[rgb]{0,0,0}\makebox(0,0)[lt]{\lineheight{0.46875152}\smash{\begin{tabular}[t]{l}$Im(f_V)$\end{tabular}}}}%
  \end{picture}%
\endgroup%

\end{minipage}
}
\subfigure[$H'$]{
\begin{minipage}[t]{0.3\textwidth}
\centering
\begingroup%
  \makeatletter%
  \providecommand\color[2][]{%
    \errmessage{(Inkscape) Color is used for the text in Inkscape, but the package 'color.sty' is not loaded}%
    \renewcommand\color[2][]{}%
  }%
  \providecommand\transparent[1]{%
    \errmessage{(Inkscape) Transparency is used (non-zero) for the text in Inkscape, but the package 'transparent.sty' is not loaded}%
    \renewcommand\transparent[1]{}%
  }%
  \providecommand\rotatebox[2]{#2}%
  \newcommand*\fsize{\dimexpr\f@size pt\relax}%
  \newcommand*\lineheight[1]{\fontsize{\fsize}{#1\fsize}\selectfont}%
  \ifx\svgwidth\undefined%
    \setlength{\unitlength}{83.05914758bp}%
    \ifx\svgscale\undefined%
      \relax%
    \else%
      \setlength{\unitlength}{\unitlength * \real{\svgscale}}%
    \fi%
  \else%
    \setlength{\unitlength}{\svgwidth}%
  \fi%
  \global\let\svgwidth\undefined%
  \global\let\svgscale\undefined%
  \makeatother%
  \begin{picture}(1,0.48093009)%
    \lineheight{1}%
    \setlength\tabcolsep{0pt}%
    \put(0,0){\includegraphics[width=\unitlength,page=1]{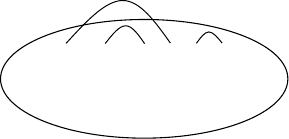}}%
    \put(0.27832737,0.14507484){\color[rgb]{0,0,0}\makebox(0,0)[lt]{\lineheight{0.46875152}\smash{\begin{tabular}[t]{l}$Im(f_V)$\end{tabular}}}}%
  \end{picture}%
\endgroup%

\end{minipage}
}
\caption{An example for the graphs $H$, $H/U$ and $H'$ when $|U|$ is even.}
\label{fig:U_even}
\end{figure}

\begin{figure}[htbp]
\centering
\subfigure[$H$]{
\begin{minipage}[t]{0.3\textwidth}
\centering
\begingroup%
  \makeatletter%
  \providecommand\color[2][]{%
    \errmessage{(Inkscape) Color is used for the text in Inkscape, but the package 'color.sty' is not loaded}%
    \renewcommand\color[2][]{}%
  }%
  \providecommand\transparent[1]{%
    \errmessage{(Inkscape) Transparency is used (non-zero) for the text in Inkscape, but the package 'transparent.sty' is not loaded}%
    \renewcommand\transparent[1]{}%
  }%
  \providecommand\rotatebox[2]{#2}%
  \newcommand*\fsize{\dimexpr\f@size pt\relax}%
  \newcommand*\lineheight[1]{\fontsize{\fsize}{#1\fsize}\selectfont}%
  \ifx\svgwidth\undefined%
    \setlength{\unitlength}{83.05914758bp}%
    \ifx\svgscale\undefined%
      \relax%
    \else%
      \setlength{\unitlength}{\unitlength * \real{\svgscale}}%
    \fi%
  \else%
    \setlength{\unitlength}{\svgwidth}%
  \fi%
  \global\let\svgwidth\undefined%
  \global\let\svgscale\undefined%
  \makeatother%
  \begin{picture}(1,0.89001817)%
    \lineheight{1}%
    \setlength\tabcolsep{0pt}%
    \put(0,0){\includegraphics[width=\unitlength,page=1]{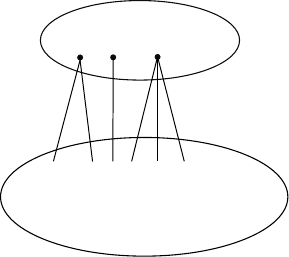}}%
    \put(0.39979654,0.75729389){\color[rgb]{0,0,0}\makebox(0,0)[lt]{\lineheight{0.46875152}\smash{\begin{tabular}[t]{l}$U$\end{tabular}}}}%
    \put(0.27832737,0.14507484){\color[rgb]{0,0,0}\makebox(0,0)[lt]{\lineheight{0.46875152}\smash{\begin{tabular}[t]{l}$Im(f_V)$\end{tabular}}}}%
    \put(0,0){\includegraphics[width=\unitlength,page=2]{U_odd_1.pdf}}%
  \end{picture}%
\endgroup%

\end{minipage}
}
\subfigure[$H/U$]{
\begin{minipage}[t]{0.3\textwidth}
\centering
\begingroup%
  \makeatletter%
  \providecommand\color[2][]{%
    \errmessage{(Inkscape) Color is used for the text in Inkscape, but the package 'color.sty' is not loaded}%
    \renewcommand\color[2][]{}%
  }%
  \providecommand\transparent[1]{%
    \errmessage{(Inkscape) Transparency is used (non-zero) for the text in Inkscape, but the package 'transparent.sty' is not loaded}%
    \renewcommand\transparent[1]{}%
  }%
  \providecommand\rotatebox[2]{#2}%
  \newcommand*\fsize{\dimexpr\f@size pt\relax}%
  \newcommand*\lineheight[1]{\fontsize{\fsize}{#1\fsize}\selectfont}%
  \ifx\svgwidth\undefined%
    \setlength{\unitlength}{83.05914758bp}%
    \ifx\svgscale\undefined%
      \relax%
    \else%
      \setlength{\unitlength}{\unitlength * \real{\svgscale}}%
    \fi%
  \else%
    \setlength{\unitlength}{\svgwidth}%
  \fi%
  \global\let\svgwidth\undefined%
  \global\let\svgscale\undefined%
  \makeatother%
  \begin{picture}(1,0.83047211)%
    \lineheight{1}%
    \setlength\tabcolsep{0pt}%
    \put(0,0){\includegraphics[width=\unitlength,page=1]{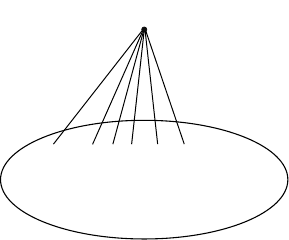}}%
    \put(0.39979654,0.75729389){\color[rgb]{0,0,0}\makebox(0,0)[lt]{\lineheight{0.46875152}\smash{\begin{tabular}[t]{l}$w_U$\end{tabular}}}}%
    \put(0.27832737,0.14507484){\color[rgb]{0,0,0}\makebox(0,0)[lt]{\lineheight{0.46875152}\smash{\begin{tabular}[t]{l}$Im(f_V)$\end{tabular}}}}%
    \put(0,0){\includegraphics[width=\unitlength,page=2]{U_odd_2.pdf}}%
  \end{picture}%
\endgroup%

\end{minipage}
}
\subfigure[$H'$]{
\begin{minipage}[t]{0.3\textwidth}
\centering
\begingroup%
  \makeatletter%
  \providecommand\color[2][]{%
    \errmessage{(Inkscape) Color is used for the text in Inkscape, but the package 'color.sty' is not loaded}%
    \renewcommand\color[2][]{}%
  }%
  \providecommand\transparent[1]{%
    \errmessage{(Inkscape) Transparency is used (non-zero) for the text in Inkscape, but the package 'transparent.sty' is not loaded}%
    \renewcommand\transparent[1]{}%
  }%
  \providecommand\rotatebox[2]{#2}%
  \newcommand*\fsize{\dimexpr\f@size pt\relax}%
  \newcommand*\lineheight[1]{\fontsize{\fsize}{#1\fsize}\selectfont}%
  \ifx\svgwidth\undefined%
    \setlength{\unitlength}{83.05914758bp}%
    \ifx\svgscale\undefined%
      \relax%
    \else%
      \setlength{\unitlength}{\unitlength * \real{\svgscale}}%
    \fi%
  \else%
    \setlength{\unitlength}{\svgwidth}%
  \fi%
  \global\let\svgwidth\undefined%
  \global\let\svgscale\undefined%
  \makeatother%
  \begin{picture}(1,0.85016072)%
    \lineheight{1}%
    \setlength\tabcolsep{0pt}%
    \put(0,0){\includegraphics[width=\unitlength,page=1]{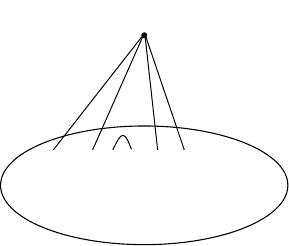}}%
    \put(0.3535282,0.7769825){\color[rgb]{0,0,0}\makebox(0,0)[lt]{\lineheight{0.46875152}\smash{\begin{tabular}[t]{l}$w_U$\end{tabular}}}}%
    \put(0.27832737,0.14507484){\color[rgb]{0,0,0}\makebox(0,0)[lt]{\lineheight{0.46875152}\smash{\begin{tabular}[t]{l}$Im(f_V)$\end{tabular}}}}%
    \put(0,0){\includegraphics[width=\unitlength,page=2]{U_odd_3.pdf}}%
  \end{picture}%
\endgroup%

\end{minipage}
}
\caption{An example for the graphs $H$, $H/U$ and $H'$ when $|U|$ is odd.}
\label{fig:U_odd}
\end{figure}

	Note that every edge of $Im(f)$ is incident with at most one vertex of $U$. Thus, we can define a function $f' \colon E(G) \to E(H/U)$ as follows. For every $e \in E(G)$ let $f'(e)$ be the edge of $H/U$ corresponding to the edge $f(e)$ of $H$. Observe that $f'$ is an $H/U$-coloring of $G$, where $w_U$ is the only unused vertex. Next, define a new mapping $f'' \colon E(G) \to E(H')$ as follows. For every $e \in E(G)$ set 
	$$f''(e)=\begin{cases}
  e'  & \text{if } f'(e) \text{ is one of the two edges lifted to } e', \\
  f'(e) & \text{if } f'(e) \in E(H').
\end{cases}$$

By construction, $f''(\partial_G(v))=\partial_{H'}(f_V(v))$ for every $v \in V(G)$.
Since $G$ and $H'$ are $r$-regular it follows that $f''$ is an $H'$-coloring. Therefore, $H' \prec G$ and hence $|V(H')| \geq |V(G)|$ by our assumptions. This is a contradiction, since
\begin{align*}
\vert E(H') \vert \leq \vert E(H/U) \vert = \vert E(H_f) \vert < \vert E(G) \vert.
\end{align*}
	\ENDproof

\begin{claim}\label{claim:f_v_surjective}
		$f_V$ is surjective.
	\end{claim}
	
	\emph{Proof of Claim \ref{claim:f_v_surjective}.}
Suppose that $H$ contains unused vertices. Then, there are $v_1,v_2 \in V(G)$ and $e \in [v_1,v_2]_{G}$ such that $f(e)$ is incident with exactly one unused vertex in $ H $, since $H$ is connected. Thus, $f(\partial_G(v_1))=f(\partial_G(v_2))$, which contradicts Claim~\ref{claim:f_injective}.
	\ENDproof

\begin{claim}\label{claim:order_H}
		$|V(H)| = |V(G)|$.
	\end{claim}
	
\emph{Proof of Claim \ref{claim:order_H}.}
Since $G$ cannot be colored by an $r$-graph of smaller order, we have $|V(H)| \geq |V(G)|$. On the other hand, $|V(H)| \leq |V(G)|$ by Claim~\ref{claim:f_v_surjective}.
	\ENDproof

	Claims \ref{claim:f_injective}, \ref{claim:f_v_surjective} and \ref{claim:order_H} imply that $f$ and $f_V$ are bijections. Furthermore,  we obtain that $e \in [v_1,v_2]_{G}$ if and only if $f(e) \in [f_V(v_1),f_V(v_2)]_H$. Therefore, $(f_V,f)$ is an isomorphism between $ G $ and $ H $, i.e. $H \cong G$.
\end{proof}

In \cite{Mkrtchyan_Pet_col}, Mkrtchyan proves that if a connected $3$-graph $H$ colors the Petersen graph $P$, then $H\cong P$. The following result is implied by Theorem~\ref{theo:coloring_graphs_in_S(r,k)_generalisation} together with Observation \ref{obs:coloring_basics} $(ii)$ and  gives a generalization of Mkrtchyan's result in the $r$-regular case.

\begin{cor}
\label{cor:coloring_graphs_in_S(r,k)}
Let $r \geq 3$ and let $G$ be an $r$-graph of class 2 such that $\pi(G')>\pi(G)$ for every $r$-graph $G'$ with $|V(G')|<|V(G)|$. If $H$ is a connected $r$-graph with $H \prec G$, then $H \cong G$.
\end{cor}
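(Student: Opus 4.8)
The plan is to reduce the corollary directly to Theorem~\ref{theo:coloring_graphs_in_S(r,k)_generalisation} by verifying its hypothesis for $G$, namely that \emph{$G$ cannot be colored by an $r$-graph of smaller order}. The only new ingredient is to translate the given condition on the matching index $\pi$ into a statement about colorability, and this is exactly what Observation~\ref{obs:coloring_basics}~$(ii)$ delivers. So the corollary should follow with almost no extra work once this translation is in place.

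First I would argue by contradiction. Suppose some $r$-graph $H'$ with $|V(H')| < |V(G)|$ colors $G$, say via an $H'$-coloring $g$. Since every $r$-graph has a perfect matching, $\pi(H') \geq 1$ is well defined; pick $\pi(H')$ pairwise disjoint perfect matchings $M_1, \dots, M_{\pi(H')}$ of $H'$. By Observation~\ref{obs:coloring_basics}~$(ii)$, the sets $g^{-1}(M_1), \dots, g^{-1}(M_{\pi(H')})$ are $\pi(H')$ pairwise disjoint perfect matchings of $G$, so that $\pi(G) \geq \pi(H')$. On the other hand, $H'$ is an $r$-graph of order strictly smaller than that of $G$, so the hypothesis of the corollary yields $\pi(H') > \pi(G)$, a contradiction. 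Hence no $r$-graph of order smaller than $G$ can color $G$.

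With this established, $G$ satisfies precisely the hypothesis of Theorem~\ref{theo:coloring_graphs_in_S(r,k)_generalisation}. Fixing an $H$-coloring $f$ of $G$ (which exists because $H \prec G$) and applying that theorem to the connected $r$-graph $H$ then gives that $(f_V, f)$ is an isomorphism, i.e.\ $H \cong G$, as desired.

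The argument is short because all the substantive work sits in Theorem~\ref{theo:coloring_graphs_in_S(r,k)_generalisation}; the one point requiring care is the \emph{direction} of the inequality in Observation~\ref{obs:coloring_basics}~$(ii)$. Pulling back pairwise disjoint perfect matchings along a coloring produces pairwise disjoint perfect matchings downstairs, so a coloring $H' \prec G$ forces $\pi(G) \geq \pi(H')$; combined with the assumption that every smaller $r$-graph has \emph{strictly larger} $\pi$ than $G$, this is what rules out a smaller coloring graph. I do not anticipate any genuine obstacle beyond correctly invoking these two earlier results in the right order.
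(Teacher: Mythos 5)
Your proof is correct and follows exactly the route the paper intends: it deduces from Observation~\ref{obs:coloring_basics}~$(ii)$ that any coloring $H'\prec G$ forces $\pi(G)\geq\pi(H')$, which together with the hypothesis rules out a smaller coloring graph, and then applies Theorem~\ref{theo:coloring_graphs_in_S(r,k)_generalisation}. The paper only states this reduction in one line, so your write-up simply makes the same argument explicit.
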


 By Theorem \ref{thm: lower bound S(r,r-2)}, $\ca T_r = \ca T(r,r-2) =\{P^{\ca M}\colon \ca M $ is a set of $r-3$ perfect matchings of the Petersen graph $ P\}$.
Hence, with Corollary~\ref{cor:coloring_graphs_in_S(r,k)} we obtain the following theorem.

\begin{theo}
\label{theo:coloring_P^M}

Let $r\geq3$, let $H$ be a connected $r$-graph and let $G\in \ca T(r,r-2)\cup \ca T(r,1)$. If $H \prec G$, then $H \cong G$.
\end{theo}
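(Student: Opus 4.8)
The plan is to deduce the statement directly from Corollary~\ref{cor:coloring_graphs_in_S(r,k)}. That corollary already delivers the desired conclusion $H \cong G$ under two hypotheses on $G$: that $G$ is of class~2, and that $\pi(G') > \pi(G)$ for every $r$-graph $G'$ with $|V(G')| < |V(G)|$. Since $H \prec G$ and the connectivity of $H$ are exactly the assumptions of the present theorem, the entire task reduces to verifying these two minimality conditions for each of the two families $\ca T(r,r-2)$ and $\ca T(r,1)$. No fresh argument about colorings is needed; the structural work was already done in Theorem~\ref{theo:coloring_graphs_in_S(r,k)_generalisation}.

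First I would treat the case $G \in \ca T(r,r-2)$. Here $\pi(G)=r-2<r$, so $G$ is class~2. By Theorem~\ref{Theorem-cong-P+M_V2} we have $\ca T(r,r-2)=\ca T_r$, meaning that $G$ is a smallest $r$-graph of class~2. Consequently every $r$-graph $G'$ with $|V(G')|<|V(G)|$ is class~1, and therefore $\pi(G')=r>r-2=\pi(G)$. Both hypotheses of Corollary~\ref{cor:coloring_graphs_in_S(r,k)} are thus satisfied, and $H\cong G$ follows.

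Next I would treat the case $G\in\ca T(r,1)$, where $\pi(G)=1<r$ so that $G$ is again class~2. By definition $G$ is a smallest $r$-graph with $\pi=1$, so every strictly smaller $r$-graph $G'$ satisfies $\pi(G')\neq 1$. Since every $r$-graph possesses a perfect matching by \cite{seymour1979multi}, we have $\pi(G')\geq 1$, and hence $\pi(G')\geq 2>1=\pi(G)$. Corollary~\ref{cor:coloring_graphs_in_S(r,k)} applies once more and yields $H\cong G$.

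There is no genuine obstacle once Corollary~\ref{cor:coloring_graphs_in_S(r,k)} is available; the only point demanding care is the translation of ``smallest in $\ca T(r,k)$'' into the inequality $\pi(G')>\pi(G)$ for all strictly smaller $r$-graphs. For $k=r-2$ this rests on the identification $\ca T(r,r-2)=\ca T_r$, so that smaller graphs are forced to be class~1 with $\pi=r$ (legitimately bypassing the empty layer $\ca G(r,r-1)$); for $k=1$ it uses only that the perfect-matching number of any $r$-graph is at least $1$. Finally, for $r=3$ the two cases coincide, since $\ca T(3,1)=\ca T(3,r-2)=\{P\}$, and the statement recovers Mkrtchyan's theorem that the only connected $3$-graph coloring $P$ is $P$ itself.
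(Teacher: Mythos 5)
Your proposal is correct and follows exactly the route the paper takes: the paper also derives this theorem directly from Corollary~\ref{cor:coloring_graphs_in_S(r,k)}, using the identification $\ca T_r = \ca T(r,r-2)$ from Theorem~\ref{Theorem-cong-P+M_V2}. Your explicit verification of the hypothesis $\pi(G')>\pi(G)$ for strictly smaller $G'$ in both cases (via class~1 forcing $\pi(G')=r$ when $G\in\ca T(r,r-2)$, and via $\pi(G')\geq 1$ always holding when $G\in\ca T(r,1)$) is precisely the detail the paper leaves implicit.
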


\begin{theo}
\label{theo:characterisation H_r}
	Let $r\ge3$ and let $G$ be a connected $r$-graph. The following statements are equivalent.
	\begin{itemize}
		\item[1)] $G\in\ca H_r$.
		\item[2)] The only connected $r$-graph coloring $G$ is $G$ itself.
		\item[3)] $G$ cannot be colored by a smaller $r$-graph.
	\end{itemize}
\end{theo}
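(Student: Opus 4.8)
The plan is to establish three elementary properties of the coloring relation $\prec$, then to prove $2)\Leftrightarrow 3)$ directly (this is where the real work sits, via Theorem~\ref{theo:coloring_graphs_in_S(r,k)_generalisation}), and finally to deduce $1)\Leftrightarrow 3)$ by a soft order-theoretic argument using the minimality of $\ca H_r$. Throughout I regard graphs up to isomorphism. First I would record that $\prec$ is \emph{reflexive} (the identity on $E(G)$ is a $G$-coloring of $G$) and \emph{transitive}: if $g$ is an $H$-coloring of $K$ and $f$ is a $K$-coloring of $G$, then $g\circ f$ is an $H$-coloring of $G$, since both the adjacency and the vertex condition are preserved under composition. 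I would also note that, since $G$ is connected, the image of any coloring lies in a single component of the coloring graph; hence $G$ is colored by an $r$-graph of order $n$ if and only if it is colored by a \emph{connected} $r$-graph of order at most $n$. In particular one may restrict to connected coloring graphs in statement $3)$.

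For $2)\Leftrightarrow 3)$, the implication $2)\Rightarrow 3)$ is immediate: a coloring graph of order $<|V(G)|$ could be taken connected by the reduction above and then could not be isomorphic to $G$, contradicting $2)$. For $3)\Rightarrow 2)$ I would split on the class of $G$. If $G$ is class $2$, then $G$ satisfies exactly the hypotheses of Theorem~\ref{theo:coloring_graphs_in_S(r,k)_generalisation}, whose conclusion---that every connected $r$-graph coloring $G$ is isomorphic to $G$---is precisely $2)$. If $G$ is class $1$, then by the remark in the introduction $G$ is colored by every $r$-graph, in particular by the unique $r$-graph $B_r$ of order $2$ (two vertices joined by $r$ parallel edges); since $3)$ forbids strictly smaller coloring graphs this forces $|V(G)|=2$, so $G\cong B_r$. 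A short direct check then finishes this case: if a connected $r$-graph $H$ colors $B_r$, the two end-vertices force a pair $u,w\in V(H)$ with $\partial_H(u)=\partial_H(w)$, so all $r$ edges at $u$ join $u$ to $w$, and connectivity with $r$-regularity gives $H\cong B_r$. Thus $2)$ holds in all cases.

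For $1)\Leftrightarrow 3)$, let $\ca C$ be the class of connected $r$-graphs satisfying $3)$. I would first show $\ca C$ is $r$-complete: for any connected $r$-graph $G$, among all connected $r$-graphs coloring $G$ (a nonempty family, since $G\prec G$) pick one $H$ of minimum order; by transitivity $H$ itself admits no strictly smaller coloring graph, so $H\in\ca C$ and $H\prec G$. Next, using $3)\Rightarrow 2)$, every member $G$ of $\ca C$ lies in \emph{every} $r$-complete set $\ca A$, because some connected $H\in\ca A$ colors $G$ and must then be isomorphic to $G$; hence $\ca C\subseteq\ca H_r$. Finally, if some $G\in\ca H_r$ failed $3)$, then $G$ would be colored by a strictly smaller $H\in\ca C\subseteq\ca H_r$, and by transitivity every graph colored by $G$ would also be colored by $H\neq G$; so $\ca H_r\setminus\{G\}$ would remain $r$-complete, contradicting inclusion-wise minimality. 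Therefore $\ca H_r=\ca C$, which is exactly $1)\Leftrightarrow 3)$ and incidentally shows $\ca H_r$ is unique.

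The only substantial input is Theorem~\ref{theo:coloring_graphs_in_S(r,k)_generalisation}, which drives the decisive step $3)\Rightarrow 2)$ for class-$2$ graphs; once that rigidity statement is in hand, everything else is formal manipulation of the quasi-order $\prec$. The one point that genuinely needs care is the class-$1$ exception, where Theorem~\ref{theo:coloring_graphs_in_S(r,k)_generalisation} does not apply and one must verify by hand that condition $3)$ singles out the two-vertex $r$-graph $B_r$ and that $B_r$ is colored only by itself.
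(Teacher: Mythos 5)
Your overall architecture coincides with the paper's: all the substance is outsourced to Theorem~\ref{theo:coloring_graphs_in_S(r,k)_generalisation}, which yields $3)\Rightarrow 2)$; $2)\Rightarrow 1)$ is trivial; and your identification $\ca H_r=\ca C$ is the same transitivity-plus-minimality argument the paper uses for $1)\Rightarrow 3)$, just packaged so that uniqueness of $\ca H_r$ falls out as a bonus. Your preliminary reductions (transitivity of $\prec$, restriction to a single component of the coloring graph) are correct and worth making explicit.

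The genuine problem is your class-$1$ branch. The ``short direct check'' that the two-vertex $r$-graph $B_r$ ($r$ parallel edges) is colored only by itself is false. In the definition of an $H$-coloring the witness vertex $u$ with $f(\partial_G(v))=\partial_H(u)$ is chosen separately for each $v$, and since the two vertices $v_1,v_2$ of $B_r$ satisfy $\partial_{B_r}(v_1)=\partial_{B_r}(v_2)=E(B_r)$, the \emph{same} vertex of $H$ may serve as witness for both; nothing forces a pair of distinct vertices $u,w$ with $\partial_H(u)=\partial_H(w)$. Concretely, for any connected $r$-graph $H$, any $u\in V(H)$ and any bijection $f\colon E(B_r)\to\partial_H(u)$ is an $H$-coloring of $B_r$: the $r$ edges of $B_r$ are pairwise adjacent, so $f$ is proper, and both endpoints use $u$ as witness. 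Hence every connected $r$-graph colors $B_r$, so $2)$ fails for $B_r$ while $3)$ holds vacuously (no smaller $r$-graph exists); one also checks that $B_r$ cannot lie in a minimal $r$-complete set, so $1)$ fails as well. In other words the equivalence genuinely breaks at this one degenerate graph, and no local repair of your check can save it. Your instinct that Theorem~\ref{theo:coloring_graphs_in_S(r,k)_generalisation} only covers class-$2$ graphs and that class $1$ needs separate treatment was correct --- the paper's own proof silently ignores this by citing a theorem whose hypothesis is ``class $2$'' --- but the correct resolution is to observe that a class-$1$ graph on more than two vertices is colored by the smaller graph $B_r$ and therefore fails $1)$, $2)$ and $3)$ simultaneously, and then to exclude $B_r$ itself (or restrict the statement to class-$2$ graphs), rather than to claim a rigidity for $B_r$ that does not hold.
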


\begin{proof}
	$2) \implies 1)$ follows trivially.
	
	$1) \implies 3)$. Assume by contradiction that $3)$ is not true. Then, let $H$ be a smallest $r$-graph smaller than $G$ such that $H\prec G$. Note that $H$ cannot be colored by a smaller $r$-graph because otherwise, since the relation $\prec$ is transitive, $G$ would be colored by an $r$-graph smaller than $H$. Hence, $H\in \ca H_r$ by Theorem \ref{theo:coloring_graphs_in_S(r,k)_generalisation}. Thus, $\ca H_r \setminus \{G\}$ is an $r$-complete set, in contradiction to the inclusion-wise minimality of $\ca H_r$.
	
	$3) \implies 2)$ follows by Theorem \ref{theo:coloring_graphs_in_S(r,k)_generalisation}.
\end{proof}

\begin{cor}
For every $r \geq 3$, there exists only one inclusion-wise minimal $r$-complete set, i.e. $\ca H_r$ is unique.
\label{cor:H_r unique}
\end{cor}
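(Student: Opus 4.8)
The plan is to deduce uniqueness immediately from the characterization in Theorem~\ref{theo:characterisation H_r}, exploiting the fact that statement~2) of that theorem is an \emph{intrinsic} property of the graph $G$ alone: it refers only to which connected $r$-graphs color $G$, and makes no mention of any particular complete set. So the heart of the argument is to observe that membership in an inclusion-wise minimal $r$-complete set is controlled entirely by this intrinsic property.

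First I would check that Theorem~\ref{theo:characterisation H_r} in fact applies to \emph{every} inclusion-wise minimal $r$-complete set, and not merely to one fixed choice. This is the only point that needs care, and it is the step I expect to be the main (though mild) obstacle, since one must rule out any circularity. Inspecting the proof of Theorem~\ref{theo:characterisation H_r}, the implications use only the defining properties of such a set, namely $r$-completeness, inclusion-wise minimality, the transitivity of $\prec$, and Theorem~\ref{theo:coloring_graphs_in_S(r,k)_generalisation} (which concerns colorings of $G$ and does not reference the complete set at all). Hence the equivalence $1)\Leftrightarrow 2)$ is valid verbatim with $\ca H_r$ replaced by any inclusion-wise minimal $r$-complete set.

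With this in hand I would let $\ca A$ and $\ca B$ be two inclusion-wise minimal $r$-complete sets and show $\ca A\subseteq \ca B$; equality then follows by symmetry. Take $G\in\ca A$. Applying the equivalence to $\ca A$, statement~2) holds, so the only connected $r$-graph coloring $G$ is $G$ itself. Since $\ca B$ is $r$-complete, there is some $H\in\ca B$ with $H\prec G$; as the elements of $\ca B$ are connected $r$-graphs, statement~2) forces $H\cong G$, whence $G\in\ca B$. This proves $\ca A\subseteq\ca B$, and reversing the roles of $\ca A$ and $\ca B$ gives $\ca A=\ca B$. Therefore the inclusion-wise minimal $r$-complete set is unique, which is exactly $\ca H_r$.
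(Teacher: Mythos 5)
Your proposal is correct and follows exactly the route the paper intends: the corollary is stated without proof as an immediate consequence of Theorem~\ref{theo:characterisation H_r}, and your argument simply makes explicit that the equivalence $1)\Leftrightarrow 2)$ holds verbatim for any inclusion-wise minimal $r$-complete set (since its proof uses only completeness, minimality, transitivity of $\prec$, and Theorem~\ref{theo:coloring_graphs_in_S(r,k)_generalisation}), whence any two such sets coincide. Your identification of the one point needing care, namely that the characterization is not tied to a fixed choice of $\ca H_r$, is exactly right.
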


For $r=3$, we have $\ca T(r,r-2)=\ca T(r,1) = \{P\}$. The Petersen Coloring Conjecture states that $\ca H_3=\{P\}$. This situation is very exclusive as we show in the following subsection.

\subsection{Infinite subsets of $\ca H_r$}


\begin{lem}\label{Lemma-2-cut-image}
	Let $r \geq 3$, let $G$ and $H$ be two connected $ r $-graphs and let $f$ be an $H$-coloring of $G$. For any $ 2 $-edge-cut $ F=\{e_1,e_2\}\subseteq E(G) $, either $ |f(F)|=1 $ or $ f(F) $ is a $2$-edge-cut of $ H $.
\end{lem}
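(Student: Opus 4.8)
The plan is to analyze how the $H$-coloring $f$ behaves on a $2$-edge-cut $F = \{e_1, e_2\}$ of $G$. Since $F$ is a $2$-edge-cut, there is a bipartition $V(G) = A \cup A^c$ with $\partial_G(A) = \{e_1, e_2\}$. The key structural fact is that the two endpoints of $F$ on each side of the cut force constraints on the images under $f$. First I would handle the trivial case: if $f(e_1) = f(e_2)$, then $|f(F)| = 1$ and we are done, so assume $f(e_1) \neq f(e_2)$ and aim to show $f(F)$ is a $2$-edge-cut of $H$.

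The central tool should be Observation~\ref{obs:coloring_basics}, specifically parts (ii)--(iv) describing how $f^{-1}$ pulls back perfect matchings, $2$-regular subgraphs, and more generally $\{K_{1,1}, C_m\colon m \geq 3\}$-factors. The idea is that $G$ decomposes across the $2$-edge-cut: removing $F$ from $G$ yields two components (using connectivity of $G$), and since $G$ is $r$-regular one can complete $e_1, e_2$ into a suitable spanning substructure. Concretely, I would look for a $\{K_{1,1}, C_m\}$-factor $H'$ of $H$ whose edge set, after pulling back, reveals the cut structure; alternatively, work directly with the decomposition of $G$ into an edge-two-cut and use that $f$ maps adjacent edges to adjacent edges. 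Setting $f(e_1) = e_1'$ and $f(e_2) = e_2'$ in $H$, I would argue that $e_1'$ and $e_2'$ cannot be adjacent in $H$ (otherwise the two endpoints of $F$ in $G$ would map to a single vertex configuration contradicting the cut), and that $H - \{e_1', e_2'\}$ is disconnected, which is exactly the assertion that $\{e_1', e_2'\}$ is a $2$-edge-cut of $H$.

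The cleanest route is probably via the vertex map $f_V$: since $H$ is a connected $r$-graph of class related to $G$, the coloring induces $f_V\colon V(G) \to V(H)$ provided $H$ has no two vertices with identical edge-neighborhood. Using $f_V$, the cut $\partial_G(A) = \{e_1, e_2\}$ should push forward to separate $f_V(A)$ from its complement in $H$, with the only crossing edges being $f(e_1)$ and $f(e_2)$. The main obstacle I anticipate is controlling which edges of $H$ can cross between $f_V(A)$ and its complement: a priori, some edge of $H$ not in the image of $f$ restricted to the cut, or an edge arising from the overlap $f_V(A) \cap f_V(A^c)$, might cross. I would need to rule out that $f_V(A)$ and $f_V(A^c)$ share vertices in a way that introduces extra crossing edges, and to show that every $H$-edge joining the two sides is the image of an edge of $F$. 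Resolving this overlap/injectivity issue at the cut — essentially showing that the pullback of $\partial_H(f_V(A))$ lands inside $F$ — is the crux of the argument, and I expect it to require the adjacency-preserving property of $f$ together with the parity constraints from Observation~\ref{Observation-same-parity}.
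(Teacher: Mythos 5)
Your proposal stops short of a proof at exactly the point you yourself flag as the crux: you never actually show that every edge of $H$ crossing between $f_V(A)$ and its complement is the image of an edge of $F$. This pushforward strategy via $f_V$ is genuinely problematic, not just technically fiddly. The map $f_V$ need not be injective, $f_V(A)$ and $f_V(A^c)$ can overlap, and $H$ may contain edges outside $\mathrm{Im}(f)$ whose behaviour the coloring does not control; moreover, even when $f(F)$ \emph{is} a $2$-edge-cut $\partial_H(X)$, the set $X$ need not have anything to do with $f_V(A)$. So the ``extra crossing edges'' you worry about cannot in general be ruled out by the adjacency-preserving property alone, and the parity observation you invoke does not obviously close the gap. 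As written, the argument is a plan with an unproved central step rather than a proof.

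The paper argues in the opposite direction, by pulling back rather than pushing forward, and this is the idea your proposal is missing. Assume $|f(F)|=2$ and that $f(F)$ is not a $2$-edge-cut of $H$. Then the two endvertices $u,v$ of $f(e_1)$ are still joined by a path $T$ in $H-f(F)$, so $C=T+f(e_1)$ is a circuit of $H$ containing $f(e_1)$ but not $f(e_2)$. By Observation~\ref{obs:coloring_basics}~$(iii)$, $f^{-1}(E(C))$ induces a $2$-regular subgraph of $G$, i.e.\ a disjoint union of circuits, and every such subgraph meets the edge-cut $F=\partial_G(A)$ in an even number of edges. But $f^{-1}(E(C))$ contains $e_1$ and not $e_2$, a contradiction. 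This pullback-of-a-circuit argument entirely avoids the injectivity and image issues that block your approach; if you want to salvage your write-up, this is the step to replace.
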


\begin{proof}
	Let $u$ and $v$ be the endvertices of $f(e_1)$.	Suppose by contradiction that $|f(F)|=2$ but $f(F)$ is not a $2$-edge-cut of $H$. Then, there is a $uv$-path $T$ in $H$ avoiding the edges of $f(F)$. Consider the circuit $C=T+f(e_1)$. By Observation~\ref{obs:coloring_basics} $(iii)$, $f^{-1}(E(C))$ is a union of circuits of $G$. This is a contradiction, since $f^{-1}(E(C))$ contains $e_1$ but not $e_2$.
\end{proof}

Let $G,H$ be two graphs, let $f \colon E(G) \to E(H)$, $g \colon V(G) \to V(H)$ and let $G'$ be a subgraph of $G$. The restriction of $f$ to $E(G')$ is denoted by $f|_{G'}$; the restriction of $g$ to $V(G')$ is denoted by $g|_{G'}$.

\begin{lem}\label{Lemma-PM-e-cong-itselt}
	Let $G$ and $H$ be two $ r $-graphs, where $r \geq 3$, and let $f$ be an $H$-coloring of $G$. 
	Let $\ca M$ be a multiset of $r-3 $ perfect matchings of  $P$ and let $ e_0\in E(P^{\ca M}) $. Let $ G' $ be an induced subgraph of $ G $ isomorphic to $P^{\ca M}-e_0$ and $ H' $ be the subgraph of $ H $ induced by $ f(E(G')) $. 
Then, $ (f_V|_{G'} , f|_{G'} ) $ is an isomorphism between $ G' $ and $ H' $, i.e.\ $ H'\cong G' $.  
\end{lem}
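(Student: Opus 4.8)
Since $V(H')=f_V(V(G'))$ and $E(H')=f(E(G'))$ hold by definition, and $f$ always satisfies $f(xy)=f_V(x)f_V(y)$, the pair $(f_V|_{G'},f|_{G'})$ is automatically an incidence preserving surjection onto $H'$. Thus the statement is equivalent to the injectivity of $f_V|_{G'}$: injectivity of $f|_{G'}$ then follows, since two distinct edges of $G'$ with the same image would be parallel edges between $f_V(x)$ and $f_V(y)$, hence adjacent, contradicting that $f$ is proper. So the plan is to prove that $f_V$ does not identify two vertices of $G'$. Write $a,b$ for the endvertices of $e_0$ and let $G^\ast$ be obtained from $G'$ by adding the edge $e_0=ab$, so that $G^\ast\cong P^{\ca M}\in\ca T(r,r-2)$. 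Because $G'$ is induced and isomorphic to $P^{\ca M}-e_0$, exactly one edge $g_a$ leaves $V(G')$ at $a$ and one edge $g_b$ leaves at $b$; hence $\{g_a,g_b\}=\partial_G(V(G'))$ is a $2$-edge-cut of $G$.

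The aim is to extend $f|_{G'}$ to an $H^\ast$-coloring $f^\ast$ of $G^\ast$, for a suitable connected $r$-graph $H^\ast$, with $f^\ast(e_0)$ a single edge $\epsilon$ satisfying $H^\ast-\epsilon=H'$. Granting this, $G^\ast\cong P^{\ca M}$ is class $2$ and, by Theorem~\ref{theo:coloring_P^M}, cannot be colored by a smaller $r$-graph, so Theorem~\ref{theo:coloring_graphs_in_S(r,k)_generalisation} yields that $(f^\ast_V,f^\ast)$ is an isomorphism $G^\ast\to H^\ast$; deleting $e_0$ and $\epsilon$ then restricts it to the desired isomorphism $G'\to H'$, and in particular $f_V|_{G'}=f^\ast_V|_{G'}$ is injective. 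To construct $H^\ast$ I would apply Lemma~\ref{Lemma-2-cut-image} to the cut $\{g_a,g_b\}$. If $f(g_a)=f(g_b)=:\epsilon$, I set $H^\ast=H$ and $f^\ast(e_0)=\epsilon$; the vertex condition of an $H$-coloring at $a$ and $b$ holds because $\partial_H(f_V(a))=f(\partial_{G'}(a))\cup\{f(g_a)\}$ and likewise at $b$. If instead $f(\{g_a,g_b\})=\{\alpha,\beta\}$ is a $2$-edge-cut of $H$ with sides $W_1\ni f_V(a)$ and $W_2$, then, since $r\ge 3$ forces both sides to be even, I would let $H^\ast$ be the connected $r$-graph obtained from $H$ by contracting $W_2$ and lifting the two cut edges at the new vertex, which is admissible by Theorem~\ref{theo:r-graph_lifting}; writing $\epsilon$ for the lifted edge I again put $f^\ast(e_0)=\epsilon$.

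The delicate point is that the second construction produces a valid coloring only if $f$ maps all of the connected graph $G'$ into the single side $W_1$; equivalently, no edge of $G'$ receives color $\alpha$ or $\beta$. If this holds, connectivity of $G'$ together with $f_V(a)\in W_1$ gives $f_V(V(G'))\subseteq W_1$ and $f(E(G'))=E(H[W_1])=E(H^\ast)\setminus\{\epsilon\}$, whence $H^\ast-\epsilon=H'$; if it fails, an edge of $G'$ would map to a cut edge and be lost under the lifting. I expect this no-crossing claim to be the heart of the proof. To establish it I would argue by contradiction and exploit that both sides of the cut are even, so every perfect matching of $H$ meets $\{\alpha,\beta\}$ in $0$ or $2$ edges. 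Choosing a perfect matching $N$ of $H$ through $\alpha$, and hence through $\beta$, and pulling it back by Observation~\ref{obs:coloring_basics}(ii), the set $f^{-1}(\{\alpha,\beta\})=\partial_G(f_V^{-1}(W_1))$ becomes contained in a perfect matching of $G$. A crossing edge of $G'$ would then yield a non-empty proper subset $S\subseteq V(G^\ast)$ whose cut $\partial_{G^\ast}(S)$ is entirely contained in a perfect matching of $G^\ast\cong P^{\ca M}$, and so has at most five edges. Since $P^{\ca M}$ is $3$-edge-connected and has no non-trivial tight edge-cut (by Theorem~\ref{Theorem-cong-P+M_V2} and its proof), a careful analysis of the few admissible cut sizes, separated according to the parity of $|S|$, should rule such a set out and complete the argument. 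The interplay between this cut analysis and the small base case $r=3$, where $G^\ast=P$, is where I expect the main technical difficulty to lie.
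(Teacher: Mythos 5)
Your overall strategy matches the paper's: close $G'$ up to a graph $G^\ast\cong P^{\ca M}$, use Lemma~\ref{Lemma-2-cut-image} on $\partial_G(V(G'))$, build a connected $r$-graph $H^\ast$ with an induced $H^\ast$-coloring of $G^\ast$, and invoke Theorem~\ref{theo:coloring_graphs_in_S(r,k)_generalisation}. The case $f(g_a)=f(g_b)$ and the case where all of $G'$ maps into one side of the $2$-edge-cut are handled exactly as in the paper (the paper simply takes $H^\ast=H[X]+x_1x_2$ rather than phrasing it as a lifting, which is the same graph). The problem is the step you yourself flag as the heart of the matter: the ``no-crossing'' claim is only sketched, and the sketched route does not close. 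From a crossing edge you obtain a set $S\subseteq V(G^\ast)$ with $\partial_{G^\ast}(S)$ contained in a perfect matching of $G^\ast\cong P^{\ca M}$, hence of size at most $5$, and you propose to rule this out by parity and the absence of non-trivial tight cuts. But for $r=3$ this is simply not a contradiction: $P$ minus a perfect matching is two disjoint $5$-circuits, so $P$ contains odd $5$-sets $S$ whose boundary $\partial_P(S)$ \emph{is} a perfect matching. Cut-size and tight-cut considerations alone therefore cannot exclude such an $S$; you would need to exploit the much stronger constraint that all crossing edges map into the two-element set $\{\alpha,\beta\}$ (forcing $f_V(S)$ into the at most two $W_2$-endpoints of $\alpha,\beta$ and then analysing the induced $5$-circuit), which you have not done.

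The paper avoids this entirely with a different argument for the bad case: writing $Z_1,Z_2$ for the images of $f_V(V(G'))$ on the two sides of the cut and $U_1,U_2$ for the unused vertices on each side, it contracts $U_1$ and $U_2$ and lifts (Theorem~\ref{theo:r-graph_lifting}) to obtain an $r$-graph $H_2$ with $|E(H_2)|\le |E(G')|+2$, hence $|V(H_2)|\le 10$; since $H_2$ still has a $2$-edge-cut it cannot be any $P^{\ca M}$, so by Theorem~\ref{Theorem-cong-P+M_V2} it is class $1$. Pulling back its $r$ pairwise disjoint perfect matchings through $f$ yields $r$ pairwise disjoint edge sets meeting every $\partial_G(u)$, $u\in V(G')$, exactly once, contradicting the fact that $P^{\ca M}$ is class $2$. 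Unless you supply a genuine replacement for this step, your proof is incomplete.
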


\begin{proof}
	By the definition of $ G' $, we have $ | \partial_G(V(G')) |=2$. Assume that $\partial_G(V(G')) =\{e_1,e_2\}$ and $ e_i=u_iv_i $ with $ u_i\in V(G') $ for each $ i\in \{1,2\} $.
	
	We first consider the case $ f(e_1)=f(e_2) $. Let $ G^* $ be the $ r $-graph obtained from $ G' $ by adding a new edge $ e_3 $ joining $ u_1 $ and $ u_2 $. Set $ f^*(e)= f(e)=f|_{G'}(e) $ for each $ e\in E(G^*) \setminus \{e_3\}$ and $ f^*(e_3)= f(e_1)=f(e_2) $. Then $ f^* $ is an $ H $-coloring of $ G^* $. Since $ G^* \cong P^{\ca M}$, we have that $ (f^*_V,f^*) $ is an isomorphism between $ G^* $ and $ H$ by Theorem \ref{theo:coloring_graphs_in_S(r,k)_generalisation}. Thus $ (f_V|_{G'} , f|_{G'} ) $ is an isomorphism of $ G' $ to $ H' $ by the definition of $ f^* $.
	
	Now we assume that $ f(e_1)\neq f(e_2) $. By Lemma \ref{Lemma-2-cut-image}, $\{f(e_1), f(e_2) \}$  is a $ 2 $-edge-cut of $ H $. Let  $ X $ be a subset of $ V(H) $ such that $ \partial_H(X)=\{f(e_1), f(e_2) \}$. Denote $ f(e_i)=x_iy_i$ with $ x_i\in X $ for each $ i\in\{1,2\} $. We consider the following two cases.
	
	{\bf Case 1.} $f_V(V(G')) \subseteq X$ or $f_V(V(G'))\subseteq V(H)\setminus X$.
	
Without loss of generality, assume that  $f_V(V(G'))\subseteq X$. Let $ G^* $ be the $ r $-graph obtained from $ G' $ by adding a new edge $ e_3 $ joining $ u_1 $ and $ u_2 $, and  $ H^* $ be the $ r $-graph obtained from $ H[X] $ by adding a new edge $ e_4 $ joining $ x_1 $ and $ x_2 $. Set  $ f^*(e)= f(e)=f|_{G'}(e) $ for each $ e\in E(G^*) \setminus \{e_3\}$ and $ f^*(e_3)= e_4 $. Then $ f^* $ is an $ H^* $-coloring of $ G^* $. Since $ G^* \cong P^{\ca M}$, we have that $ (f^*_V,f^*) $ is an isomorphism between $ G^* $ and $ H^*$ by Theorem \ref{theo:coloring_graphs_in_S(r,k)_generalisation}. Thus $ (f_V|_{G'} , f|_{G'} ) $ is an isomorphism of $ G' $ to $ H' $ by the definition of $ f^* $ and the statement follows.

{\bf Case 2.} $f_V(V(G'))\cap X\neq \emptyset~\text{and}~f_V(V(G'))\cap (V(H)\setminus X)\neq \emptyset$.

We show that this case does not apply. Let $Z_1=f_V(V(G'))\cap X$ and $Z_2=f_V(V(G'))\cap (V(H)\setminus X)$. Observe that $\{ f(e_1),f(e_2) \}\subseteq \partial_H(Z_1)\cup \partial_H(Z_2)$.
	Set $ U_1=X\setminus Z_1 $ and $ U_2= (V(H)\setminus X)\setminus Z_2$. Note that $ U_1$ and $ U_2$ might be empty. We construct a new $r$-graph $H_2$ from $H$ in two steps.
	First, if $ U_1=\emptyset$, set $ H_1=H $. Otherwise we can construct an $r$-graph $ H_1 $ starting from $ H/U_1 $ by taking suitable lifting operations at $w_{U_1}$ as described in Theorem \ref{theo:r-graph_lifting}, namely: if $\vert U_1 \vert$ is even, then apply $\frac{1}{2}\left \vert \partial_H(U_1)\right|$ lifting operations at $w_{U_1}$; if $\vert U_1 \vert$ is odd, then apply $\frac{1}{2}\left( \vert \partial_H(U_1) \vert - r \right)$ lifting operations at $w_{U_1}$.
Observe that $ U_2\subset V(H_1)$. Next, if $ U_2=\emptyset$, set $ H_2=H_1$. Otherwise let $ H_2 $ be a graph obtained from $ H_1/U_2 $ by taking similar lifting operations as described above at the vertex $w_{U_2}$. An example for the construction of $H_2$ is given in Figure~\ref{fig:H_H1_H2}.

\begin{figure}[htbp]
\centering
\subfigure[$H$]{
\begin{minipage}[t]{0.3\textwidth}
\centering
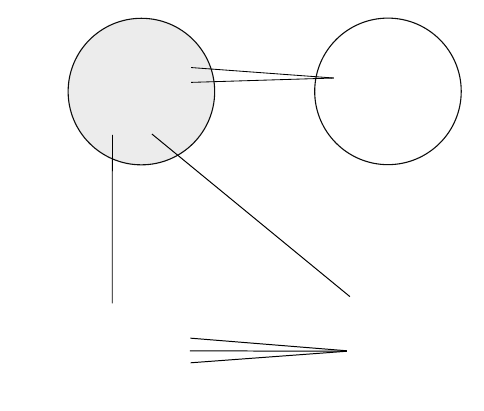
\end{minipage}
}
\subfigure[$H_1$]{
\begin{minipage}[t]{0.3\textwidth}
\centering
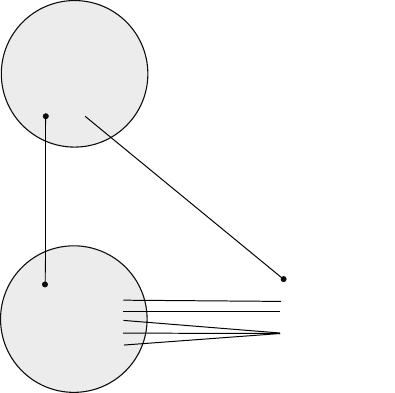
\end{minipage}
}
\subfigure[$H_2$]{
\begin{minipage}[t]{0.3\textwidth}
\centering
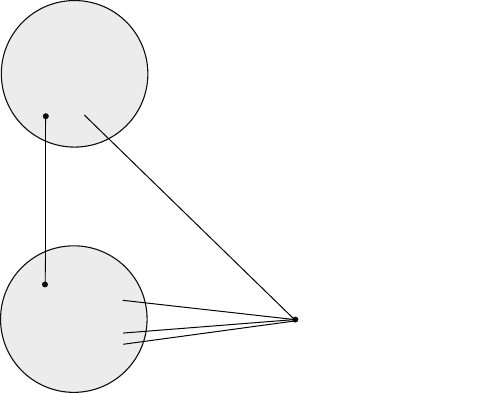
\end{minipage}
}
\caption{An example for the graphs $H$, $H_1$ and $H_2$, when $U_1, U_2$ are non-empty, $U_1$ is of even cardinality and $U_2$ is of odd cardinality.}
\label{fig:H_H1_H2}
\end{figure}

By Theorem~\ref{theo:r-graph_lifting}, this can be done such that $H_2$ is an $r$-graph. 
	Furthermore, we have $$|E(H_2)| \leq |E(H') \cup \{f(e_1), f(e_2)\}| \leq |E(G')|+2.$$
	As a consequence, $|V(H_2)|\leq 10$. Thus, $ H_2 $ is class $ 1 $ since it has a $ 2 $-edge-cut and hence, $ H_2 $ has $ r $ pairwise disjoint perfect matchings. By the construction of $ H_2 $, we deduce that $ H $ contains  $ r $ pairwise disjoint sets of edges, denoted by  $ S_1,\ldots,S_r $, such that  $|\partial_H(y)\cap S_j|=1$ for each $y\in f_V(V(G'))$  and each $ j\in\{1,\ldots,r\} $. Then $ f^{-1}(S_1), \ldots,  f^{-1}(S_r)$ are  $ r $ pairwise disjoint sets  of edges of $ G $ such that  $|\partial_G(u )\cap f^{-1}(S_j)|=1$ for each $u\in V(G')$  and each $ j\in\{1,\ldots,r\} $. This is a contradiction since $ G' $ is class $ 2 $.
\end{proof}

Let $ G $ and $ G' $ be two disjoint $ r $-graphs  of class $ 2 $  with $ e\in E(G) $ and $ e'\in E(G') $. Denote by $ (G,e)| (G',e')$  the set of all  graphs obtained from $ G $ by replacing the edge $ e $ of $ G $ by $ (G',e')$, that is,  deleting $ e $ from $ G $ and $ e' $ from $ G' $, and then adding two edges between $ V(G) $ and $ V(G') $ such that the resulting graph is regular (see Figure~\ref{fig:replacing_edge}).

\begin{figure}[htbp]
\centering
\scalebox{1}{
\begingroup%
  \makeatletter%
  \providecommand\color[2][]{%
    \errmessage{(Inkscape) Color is used for the text in Inkscape, but the package 'color.sty' is not loaded}%
    \renewcommand\color[2][]{}%
  }%
  \providecommand\transparent[1]{%
    \errmessage{(Inkscape) Transparency is used (non-zero) for the text in Inkscape, but the package 'transparent.sty' is not loaded}%
    \renewcommand\transparent[1]{}%
  }%
  \providecommand\rotatebox[2]{#2}%
  \newcommand*\fsize{\dimexpr\f@size pt\relax}%
  \newcommand*\lineheight[1]{\fontsize{\fsize}{#1\fsize}\selectfont}%
  \ifx\svgwidth\undefined%
    \setlength{\unitlength}{177.47439233bp}%
    \ifx\svgscale\undefined%
      \relax%
    \else%
      \setlength{\unitlength}{\unitlength * \real{\svgscale}}%
    \fi%
  \else%
    \setlength{\unitlength}{\svgwidth}%
  \fi%
  \global\let\svgwidth\undefined%
  \global\let\svgscale\undefined%
  \makeatother%
  \begin{picture}(1,0.19149986)%
    \lineheight{1}%
    \setlength\tabcolsep{0pt}%
    \put(0,0){\includegraphics[width=\unitlength,page=1]{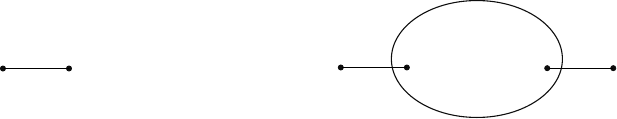}}%
    \put(0.03757375,0.04343616){\color[rgb]{0,0,0}\makebox(0,0)[lt]{\lineheight{0.46875152}\smash{\begin{tabular}[t]{l}$e$\end{tabular}}}}%
    \put(0.68543645,0.10734421){\color[rgb]{0,0,0}\makebox(0,0)[lt]{\lineheight{0.46875152}\smash{\begin{tabular}[t]{l}$G'-e'$\end{tabular}}}}%
    \put(0,0){\includegraphics[width=\unitlength,page=2]{replacing_edge.pdf}}%
  \end{picture}%
\endgroup%
} 
\caption{A replacement of the edge $e$ by $(G',e')$.}
\label{fig:replacing_edge}
\end{figure}

In fact, any graph in $ (G,e)| (G',e')$ is an $ r $-graph of class $ 2 $. Furthermore, we use $ G | (G',e')$ to denote the set of all  graphs obtained from $ G $ by replacing each edge of $ G $ by $ (G',e')$.

\begin{theo}
\label{theo:inductive construction}
	Let $\ca M$ be a multiset of $r-3 $ perfect matchings of $P$, where $r \geq 3$, and let $ e_0\in E(P^{\ca M}) $. Let $G$ be an $r$-graph such that $ G\ncong  P^{\ca M} $. If $ G \in \ca H_r $ , then $ G | (P^{\ca M},e_0) \subset \ca H_r$. 
\end{theo}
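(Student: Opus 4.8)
The plan is to fix an arbitrary element $\widehat G \in G\,|\,(P^{\ca M},e_0)$ and prove $\widehat G\in\ca H_r$. By Theorem \ref{theo:characterisation H_r} it suffices to verify condition $3)$, i.e.\ that $\widehat G$ cannot be colored by a smaller $r$-graph. So I would let $H$ be a connected $r$-graph and $f$ an $H$-coloring of $\widehat G$, and aim to show $|V(H)|\ge |V(\widehat G)|$. By construction $\widehat G$ arises from $G$ by replacing every edge $e=vw$ of $G$ with a copy $Q_e$ of $P^{\ca M}-e_0$ that is joined to $v$ and $w$ through a $2$-edge-cut $F_e=\{g_1^e,g_2^e\}$. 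Each $Q_e$ is an induced subgraph of $\widehat G$ isomorphic to $P^{\ca M}-e_0$, so Lemma \ref{Lemma-PM-e-cong-itselt} applies and shows that $f$ restricts to an isomorphism of $Q_e$ onto $H'_e:=H[f(E(Q_e))]\cong P^{\ca M}-e_0$; in particular $f_V$ is injective on each $V(Q_e)$ and sends the two special vertices of $Q_e$ (those incident with $g_1^e,g_2^e$) to the two special vertices of $H'_e$. Note that $|V(\widehat G)|=|V(G)|+10\,|E(G)|$, since each gadget contributes the $10$ vertices of $P^{\ca M}-e_0$.

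Next I would determine how the images $H'_e$ attach to the rest of $H$. Applying Lemma \ref{Lemma-2-cut-image} to the $2$-edge-cut $F_e$, either $f(g_1^e)=f(g_2^e)$, or $f(F_e)$ is a $2$-edge-cut of $H$ separating $H'_e$ from the remaining vertices. The heart of the argument is to exclude the first alternative. Tracing the colors shows that $f(g_1^e)=f(g_2^e)$ would identify, in pairs, the two special vertices of $Q_e$ with the two $G$-endpoints $v,w$ of $e$, so that $f_V(v)$ lies inside $H'_e$ and every edge joining $v$ to a gadget different from $Q_e$ is colored by an edge lying inside $H'_e$. Since each non-special vertex of $H'_e\cong P^{\ca M}-e_0$ already has full degree $r$ within $H'_e$, this overloads the image and forces the images of the gadgets incident with $v$ to fuse. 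Propagating this through the connected graph $G$ should yield an $H$-coloring of $\widehat G$ realized through a proper contraction, which I intend to convert into an $r$-graph coloring of $G$ itself of strictly smaller order, contradicting $G\in\ca H_r$.

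Once every $f(F_e)$ is a genuine $2$-edge-cut, I would argue that the images $H'_e$ are pairwise vertex-disjoint: an overlap of two copies of $P^{\ca M}-e_0$ would, by the same degree bookkeeping as above, overload a vertex. Then contracting each $H'_e$ back to a single edge produces a connected $r$-graph $H_0$ (this simply inverts the edge-replacement operation, which by the paragraph preceding the theorem preserves $r$-graphs), and $f$ descends to an $H_0$-coloring of $G$. Since $G\in\ca H_r$, Theorem \ref{theo:characterisation H_r} gives $|V(H_0)|\ge |V(G)|$, and counting the contracted vertices yields $|V(H)| = |V(H_0)| + 10\,|E(G)| \ge |V(G)| + 10\,|E(G)| = |V(\widehat G)|$. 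Hence no smaller $r$-graph colors $\widehat G$, so $\widehat G\in\ca H_r$, and as $\widehat G$ was arbitrary, $G\,|\,(P^{\ca M},e_0)\subset\ca H_r$.

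I expect the main obstacle to be the second paragraph together with the disjointness claim: excluding the degenerate case $f(g_1^e)=f(g_2^e)$ and ruling out overlaps of the gadget images $H'_e$. This is precisely where the hypothesis $G\ncong P^{\ca M}$ is essential. If $G$ were isomorphic to $P^{\ca M}$, then $\widehat G$ would contain additional copies of $P^{\ca M}-e_0$ spanning a gadget together with part of $G$, so the gadget decomposition could not be recovered from $f$ alone and the reduction would land on an accidental extra copy rather than on $G$; with $G\ncong P^{\ca M}$ the decomposition is forced and the collapse becomes incompatible with $G\in\ca H_r$. By contrast, the remaining steps — that disjoint $2$-edge-cut attachments make $H_0$ a well-defined connected $r$-graph, that $f$ induces a genuine $H_0$-coloring of $G$ (via Observation \ref{obs:coloring_basics} and Theorem \ref{theo:r-graph_lifting}), and the closing vertex count — I regard as routine bookkeeping.
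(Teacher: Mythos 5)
Your overall architecture matches the paper's proof: reduce to condition $3)$ of Theorem \ref{theo:characterisation H_r}, apply Lemma \ref{Lemma-PM-e-cong-itselt} to each gadget, use Lemma \ref{Lemma-2-cut-image} on the attaching $2$-edge-cuts, contract the gadget images and count. However, the two steps you yourself flag as the main obstacles are exactly where the proposal has genuine gaps. First, the degenerate case $f(g_1^e)=f(g_2^e)$: the ``overloading/propagation'' sketch is not an argument, and the contradiction you aim for (an $r$-graph coloring $G$ of \emph{strictly smaller} order) is unavailable in general, because $G$ may itself have order $10$ --- the hypotheses permit $G\cong P^{\ca M'}$ for a multiset $\ca M'\ne\ca M$. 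The correct route is shorter: if $f(g_1^e)=f(g_2^e)$, the first case of Lemma \ref{Lemma-PM-e-cong-itselt} together with Theorem \ref{theo:coloring_graphs_in_S(r,k)_generalisation} forces $H\cong P^{\ca M}$; since $P^{\ca M}$ is $3$-edge-connected, Lemma \ref{Lemma-2-cut-image} then makes \emph{every} attaching $2$-edge-cut of $\widehat{G}$ collapse to a single edge of $H$, so $H\prec G$, hence $H\cong G$ because $G\in\ca H_r$, contradicting $G\ncong P^{\ca M}$. This also corrects your account of where the hypothesis $G\ncong P^{\ca M}$ enters: it is not about accidental extra copies of $P^{\ca M}-e_0$ inside $\widehat{G}$, but about the coloring graph $H$ being forced to be isomorphic to both $P^{\ca M}$ and $G$.

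Second, the disjointness of the images $H'_e$. Your ``degree bookkeeping'' can rule out a \emph{partial} overlap of two copies of $P^{\ca M}-e_0$ (the paper does this by noting that a partial overlap would force one image to contain an edge of a $2$-edge-cut of $H$, giving $P^{\ca M}-e_0$ a bridge or a $2$-edge-cut of non-adjacent edges), but it does not exclude the case $V(H'_e)=V(H'_{e'})$ for distinct edges $e,e'$ of $G$: two gadgets mapping isomorphically onto the \emph{same} copy of $P^{\ca M}-e_0$ overload no vertex. The paper accordingly proves only ``equal or disjoint'' at this stage, carries the possibility of coincident images through the contraction (coincident images simply produce $f'(e)=f'(e')$ in the induced map $f'\colon E(G)\to E(H_0)$), and excludes it only at the very end, because $f'$ must be an isomorphism --- again since $G\in\ca H_r$ --- and hence injective on edges. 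If you restructure the argument this way, and also verify explicitly that no vertex of $\widehat{G}$ outside the gadgets is mapped into a gadget image and that the outer endpoints of the cut edges avoid all gadget images (both are needed for your ``routine'' descent of $f$ to an $H_0$-coloring of $G$), the plan goes through.
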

\begin{proof}
	By Theorem~\ref{theo:characterisation H_r}, it suffices to prove that any $ G^*\in G | (P^{\ca M},e_0)$ cannot be colored by a connected $r$-graph of smaller order. 
	Let $ H $ be a connected
	$r$-graph such that $ G^* $ has an $ H $-coloring, denoted by $ f $. Label all  subgraphs  of $ G^*$ isomorphic to $P^{\ca M}-e_0$ as $ G_1, \ldots, G_\ell $, where $ \ell=|E(G)| $, and denote by $ H_i $ the subgraph of $ H $ induced by $ f_V(V(G_i)) $. Note that $H_i \cong P^{\ca M}-e_0$ by Lemma~\ref{Lemma-PM-e-cong-itselt}. For each $ i\in \{1,\ldots,\ell\}$, we label the two edges of  $ \partial_{G^*}(V(G_i))  $ as $ e^1_i $ and $ e^2_i $, and let $ e^t_i=u^t_iv^t_i $ with $ v^t_i\notin  V(G_i) $ for each $ t\in\{1,2\} $.
	
	\begin{claim}\label{Claim-2-edge-cut-induced graph}
		$ f(\partial_{G^*}(V(G_i)) )$ is a $ 2 $-edge-cut in $ H $, for every $ i\in\{1,\ldots,\ell\} $. 
	\end{claim}
	
	\emph{Proof of Claim \ref{Claim-2-edge-cut-induced graph}.}
	By Lemma \ref{Lemma-2-cut-image}, we suppose to the contrary that there is $ i\in\{1,\ldots,\ell\} $ such that $ f(e^1_i)=f(e^2_i) $. With $ G_i\cong  P^{\ca M}-e_0$, we have $ H\prec P^{\ca M}  $ by Lemma \ref{Lemma-PM-e-cong-itselt}, and so $ H\cong P^{\ca M} $ by Theorem \ref{theo:coloring_graphs_in_S(r,k)_generalisation}. Then, $ |f(F)|=1$  for any $ 2 $-edge-cut $ F\subset E(G^*)$ by Lemma \ref{Lemma-2-cut-image} since $ P^{\ca M} $ is $ 3 $-edge-connected. Thus, by the construction of $ G^* $, we have $ H\prec G $, which implies $ H\cong G $ by Theorem \ref{theo:coloring_graphs_in_S(r,k)_generalisation}. This is a contradiction to the fact that $ G\ncong  P^{\ca M} $.
	\ENDproof
	
	\begin{claim}\label{Claim-PM-induced-graph-same-image}
	 $ V(H_i) = V(H_j) $ or $V(H_i) \cap V(H_j)  =\emptyset$, for every $i,j\in \{1,\ldots,\ell\}$.
	\end{claim}
	
	\emph{Proof of Claim \ref{Claim-PM-induced-graph-same-image}.}
	Assume $ V(H_i)  \cap V(H_j)  \neq\emptyset$. To complete the proof, we shall show $V(H_i) \setminus V(H_j) = \emptyset$  and $V(H_j) \setminus V(H_i) = \emptyset$.
	Without loss of generality, suppose to the contrary that $ V(H_j) \setminus V(H_i)  \neq\emptyset$. Note that $ f(\partial_{G^*}(V(G_i)) )$  is a $ 2 $-edge-cut in $ H $ by Claim \ref{Claim-2-edge-cut-induced graph}.
	Observe that both $H_i $ and $H_j $ are isomorphic to $P^{\ca M} - e_0 $ by Lemma \ref{Lemma-PM-e-cong-itselt}. 
	Thus, at least one edge of  $ f(\partial_{G^*}(V(G_i)) )$ is contained in $ E(H_j) $, since $H_j $ is connected. As a consequence,  $ H_j $ either has a bridge or a $2$-edge-cut consisting of two non-adjacent edges, since an $r$-graph has no cut-vertex. This is not possible. 
	\ENDproof
	
	\begin{claim}\label{Claim-vertex-not-in-induce-graph}
		$ f_V(z) \notin \bigcup^{\ell}_{i=1}V(H_i)$, for every $z\in V(G^*)\setminus(\bigcup^{\ell}_{i=1}V(G_i)) $.
	\end{claim}
	
	\emph{Proof of Claim \ref{Claim-vertex-not-in-induce-graph}.}
	Suppose to the contrary that there is a vertex $z\in V(G^*)\setminus(\bigcup^{\ell}_{i=1}V(G_i)) $ such that 
	$ f_V(z)\in V(H_j) $ for some $ j\in \{1,\ldots,\ell\}$. Let $ e $ be an edge incident with $ f_V(z)$ in $ H_j $.
	By the construction of $ G^* $, the only edge of $ f^{-1}(e)\cap \partial_{G^*}(z)$ is an element of $ \partial_{G^*}(V(G_k)) $ for some $ k\in\{1,\ldots,\ell\} $. Thus, $ e $ is   in a $ 2 $-edge-cut of $ H $ by Claim \ref{Claim-2-edge-cut-induced graph}, contradicting the fact that $ H_j\cong  P^{\ca M}-e_0$ by Lemma \ref{Lemma-PM-e-cong-itselt}. 
	\ENDproof
	
	  By Claim \ref{Claim-2-edge-cut-induced graph},  $\partial_{H}(V(H_i))=  f(\partial_{G^*}(V(G_i)) ) =\{f(e^1_i), f(e^2_i) \}$. Let $ f(e^t_i)=x^t_iy^t_i $ with $ y^t_i\notin  V(H_i) $ for each $ t\in\{1,2\} $.  
	
	\begin{claim}\label{Claim-y^t_i-notin-Hj}
		$\{ y^1_i,y^2_i\}\cap  V(H_j)=\emptyset$, for every $ i,j\in \{1,\ldots,\ell\}$.
	\end{claim}
	
	\emph{Proof of Claim \ref{Claim-y^t_i-notin-Hj}.}
	By contradiction, suppose $ y^t_i\in  V(H_j)$ for some $ t\in\{1,2\} $. Note that $ f_V(v^t_i)\in\{y^t_i, x^t_i \} $ and $ x^t_i\in  V(H_i)$.
	Thus, $ f_V(v^t_i)\in V(H_i)\cup V(H_j)$. This is a contradiction to Claim \ref{Claim-vertex-not-in-induce-graph} since $ v^t_i \in V(G^*)\setminus(\bigcup^{\ell}_{i=1}V(G_i)) $ by the construction of $ G^* $.
	\ENDproof
	
	

	Note that $G$ can be obtained from $G^*$ by deleting all vertices of $ G_i $ and adding a new edge edge $ e_ i$ joining $v^1_i$ and $v^2_i$ for each $ i\in \{1,\ldots,\ell\}$.
	By Claims \ref{Claim-PM-induced-graph-same-image} and \ref{Claim-y^t_i-notin-Hj},  $(V(H_i)\cup\{y^1_i,y^2_i\}) \cap V(H_j)  =\emptyset$ if $ V(H_i) \neq V(H_j) $ for  each $ i,j\in \{1,\ldots,\ell\}$. Thus, we can construct an $ r $-graph $ H' $  from $ H $ by deleting all vertices of $ H_i $ and adding a new edge $ g_ i$ joining $y^1_i$ and $y^2_i$ for each $ i\in \{1,\ldots,\ell\}$.  Note that, for some $i\ne j \in\{1,\dots,\ell\}$, it might happen that $ V(H_i)=V(H_j) $. In such a case, $g_i=g_j$.	Define a mapping $ f' \colon  E(G) \to E(H')$ by letting $ f'(e_i)=g_i$, for each $ i\in \{1,\ldots,\ell\}$.  By Claim \ref{Claim-vertex-not-in-induce-graph}, $ f'_V(z)\in V(H')$  for every vertex $ z  \in V(G)\subset V(G^*)$.
	Furthermore, we have $ f'(\partial_{G}(z)) =\partial_{H'}(f'_V(z))$. Since both $ G $ and $ H' $ are $ r $-graphs,  $ f'$ is proper. Thus, $ f'$ is an $ H' $-coloring of $ G$. Then, $( f'_V,  f')$ is an isomorphism between $G$ and $H'$ by Theorem \ref{theo:coloring_graphs_in_S(r,k)_generalisation}. This implies that $ |V(G^*)\setminus(\bigcup^{\ell}_{i=1}V(G_i))| =  |V(H)\setminus(\bigcup^{\ell}_{i=1}V(H_i))|  $, and $ V(H_i)\neq V(H_j)$ for any distinct $ i,j\in \{1,\ldots,\ell\}$ since $ f' (e_i)\neq  f' (e_j)$.
	Therefore, $ |V(G^*)|=|V(H)|$ by Claims~\ref{Claim-PM-induced-graph-same-image} and \ref{Claim-vertex-not-in-induce-graph}, which completes the proof.
\end{proof}

The following corollary answers the question of 
\cite{MTZ_r_graphs} whether for each $r \geq 4$, there exists a
connected $r$-graph $H$ with $H \prec G$ for every $r$-graph $G$.

\begin{cor}
	\label{cor:H_3 = P or infinite}
	Either $\ca H_3 = \{P\}$ or $\ca H_3$ is an infinite set.	
	Moreover, if $r\geq 4$, then $\ca H_r$ is an infinite set.
\end{cor}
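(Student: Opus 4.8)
The plan is to use Theorem~\ref{theo:inductive construction} as an engine that turns a single ``extra'' element of $\ca H_r$ into infinitely many. Throughout I record that $\ca T(r,r-2)\cup\ca T(r,1)\subseteq\ca H_r$, which follows by combining Theorems~\ref{theo:characterisation H_r} and~\ref{theo:coloring_P^M}. Fix any $P^{\ca M}\in\ca T_r=\ca T(r,r-2)$ together with an edge $e_0\in E(P^{\ca M})$, and suppose we have found some $G_0\in\ca H_r$ with $G_0\ncong P^{\ca M}$. Replacing every edge of a graph $G$ by a copy of $(P^{\ca M},e_0)$ adds $|V(P^{\ca M})|=10$ vertices per edge, so every $G^*\in G\,|\,(P^{\ca M},e_0)$ satisfies $|V(G^*)|=|V(G)|+10\,|E(G)|>|V(G)|$. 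Letting $G_{n+1}$ be any element of $G_n\,|\,(P^{\ca M},e_0)$, Theorem~\ref{theo:inductive construction} gives $G_{n+1}\in\ca H_r$, and the strictly increasing orders make the $G_n$ pairwise non-isomorphic; moreover $|V(G_n)|>10=|V(P^{\ca M})|$ for $n\ge1$, so $G_n\ncong P^{\ca M}$ and the theorem may be reapplied at every step. Hence $\{G_n:n\ge0\}$ is an infinite subset of $\ca H_r$, and the whole statement reduces to producing the seed $G_0$.

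For $r=3$ this seed governs the stated dichotomy. Here $\ca M=\emptyset$, $P^{\ca M}=P$, and $P\in\ca H_3$. If $\ca H_3=\{P\}$ we are in the first alternative; otherwise some $G_0\in\ca H_3$ satisfies $G_0\ncong P$, and the engine produces an infinite subset of $\ca H_3$. This is precisely where the Petersen Coloring Conjecture enters, as $\ca H_3=\{P\}$ is equivalent to $P$ coloring every connected bridgeless cubic graph.

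For $r\ge5$ the seed is immediate: by Theorem~\ref{thm: lower bound S(r,r-2)} we have $|\ca T_r|=p'(r-3,6)\ge2$, so there are non-isomorphic $P^{\ca M},P^{\ca M'}\in\ca T_r\subseteq\ca H_r$, and we take $G_0=P^{\ca M'}$. The only case requiring a separate idea is $r=4$, where $|\ca T_4|=p'(1,6)=1$, so $\ca T_4=\{P+M\}$ provides no second seed; this is the main obstacle. I would resolve it by exhibiting a class-$2$ $4$-graph in $\ca H_4$ that is not isomorphic to $P+M$, the natural candidate being a smallest $4$-graph with $\pi=1$, that is, an element of $\ca T(4,1)$. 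Such a graph lies in $\ca H_4$ by the inclusion recorded above, and since $\pi$ is an isomorphism invariant with $\pi(P+M)=2\ne1$, it is not isomorphic to $P+M$ and can serve as $G_0$. The crux is therefore to prove that $\ca T(4,1)\ne\emptyset$, equivalently that some $4$-graph has no two disjoint perfect matchings; once such a graph is available the engine above completes the proof.
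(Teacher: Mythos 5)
Your overall strategy is the paper's: Theorem~\ref{theo:inductive construction} is the engine, and everything reduces to exhibiting a seed $G_0\in\ca H_r$ with $G_0\ncong P^{\ca M}$. Your $r=3$ argument is fine (indeed slightly more direct than the paper's, which takes a smallest $3$-graph not colored by $P$ and checks it lies in $\ca H_3$), and your seed for $r\ge 5$ --- two non-isomorphic elements of $\ca T_r=\ca T(r,r-2)\subseteq\ca H_r$, available because $|\ca T_r|\ge p'(r-3,6)\ge 2$ --- is a legitimate variant that the paper does not use; it buys you independence from any statement about $\ca T(r,1)$ for those $r$.

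The genuine gap is at $r=4$, and you have correctly located but not closed it: you need $\ca T(4,1)\neq\emptyset$, i.e.\ the existence of a $4$-graph with no two disjoint perfect matchings, and you offer no construction or reference for it. This is not a routine verification --- it is a nontrivial result of Rizzi \cite{rizzi1999indecomposable}, which is exactly what the paper invokes at this point (the paper in fact uses $\ca T(r,1)\neq\emptyset$ as its seed for all $r\ge4$, which is why it never needs your counting argument). Without this input the claim that $\ca H_4$ is infinite is unproven: a priori it could be that every connected $4$-graph is colored by $P+M$, in which case $\ca H_4=\{P+M\}$ would be finite, so some second element of $\ca H_4$ genuinely must be produced rather than merely named. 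Supplying the citation to Rizzi (or an explicit indecomposable $4$-graph) completes your proof; as written, the $r=4$ case is incomplete.
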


\begin{proof}
	If $\ca H_3 \neq \{P\}$, then there is a smallest $3$-graph $G$ that cannot be colored by $P$. Note that $G$ is class 2 and not isomorphic to $P$. Furthermore, if $H \prec G$ for a connected $3$-graph $H$ of smaller order, then $P \prec H$ by the choice of $G$ and hence $P \prec G$, a contradiction. Thus, we can use Theorem~\ref{theo:inductive construction} to inductively construct infinitely many graphs belonging to $\ca H_3$.
	
	By Theorems~\ref{theo:coloring_P^M} and \ref{theo:characterisation H_r}, $ \ca T (r,1) \subset \ca H_r$. Note that the set $\ca T (r,1)$ is non-empty (see \cite{rizzi1999indecomposable}), and for $r\geq 4$, it does not contain any graph isomorphic to $P^{\ca M}$, where $\ca M$ is any multiset of $r-3$ perfect matchings of $P$. Hence, we can use Theorem~\ref{theo:inductive construction} to inductively construct infinitely many graphs belonging to $\ca H_r$.
\end{proof}

%
%
%

\subsection{Simple $r$-graphs}

In \cite{MTZ_r_graphs} the authors also asked whether for every $r\geq 4$, there is a connected $r$-graph coloring all simple $r$-graph. In this section we answer this question by showing that there is no finite set of connected $r$-graphs $\ca H_r'$ such that every connected simple $r$-graph can be colored by an element of $\ca H_r'$.

\begin{lem}[\cite{jin2017covers}]\label{lem:1-fator_avoiding_r-1_edges}
	Let $r$ be a positive integer, $G$ be an $r$-graph and $F \subseteq E(G)$. 
	If $|F|\le r-1$, then $G-F$ has a $1$-factor.
\end{lem}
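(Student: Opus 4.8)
The plan is to invoke Tutte's $1$-factor theorem and reach a contradiction through an edge-counting argument driven by the defining inequality of an $r$-graph. Write $H = G - F$. First I would record that $|V(G)|$ is even: otherwise taking $X = V(G)$ in the $r$-graph condition gives $|\partial_G(V(G))| = 0 < r$, which is impossible (this also follows from the fact that every $r$-graph has a perfect matching). Now suppose, for contradiction, that $H$ has no $1$-factor. By Tutte's theorem there is a set $S \subseteq V(H) = V(G)$ with $o(H-S) > |S|$, where $o(\cdot)$ denotes the number of odd components. Since $|V(G)|$ is even, the quantities $o(H-S)$ and $|S|$ have the same parity, so in fact $o(H-S) \ge |S| + 2$. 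Write $C_1, \dots, C_k$ for the odd components of $H-S$, with $k \ge |S|+2$.

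The key step is to localize the $r$-graph condition to each odd component. For every $i$, the set $V(C_i)$ is odd, so $|\partial_G(V(C_i))| \ge r$. Every edge of $\partial_G(V(C_i))$ either lies in $F$ (and is deleted) or survives in $H$; a surviving edge leaving $C_i$ must reach $S$, because $C_i$ is a whole component of $H-S$. Hence $r \le |\partial_G(V(C_i))| = |[V(C_i),S]_H| + |\partial_G(V(C_i)) \cap F|$ for each $i$.

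Summing over $i = 1, \dots, k$ gives $kr \le \sum_i |[V(C_i),S]_H| + \sum_i |\partial_G(V(C_i)) \cap F|$. The first sum counts edges of $H$ between $S$ and the odd components, so it is at most $\sum_{v \in S} d_H(v) \le r|S|$, as every vertex has degree at most $r$ in $H$. For the second sum, each edge of $F$ has only two endpoints and so contributes to $\partial_G(V(C_i))$ for at most two indices $i$; thus $\sum_i |\partial_G(V(C_i)) \cap F| \le 2|F| \le 2(r-1)$. Combining these bounds yields $kr \le r|S| + 2(r-1)$, and substituting $k \ge |S|+2$ gives $2r \le 2r - 2$, a contradiction. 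Therefore $H = G - F$ has a $1$-factor.

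The two points requiring care are the parity refinement (upgrading $o(H-S) \ge |S|+1$ to $o(H-S) \ge |S|+2$, which needs $|V(G)|$ even) and the bookkeeping for $F$: an edge of $F$ joining two distinct odd components is counted twice, so one only gets the factor $2|F|$ rather than $|F|$. I expect this double counting to be the main obstacle, but it is harmless precisely because the parity step supplies the extra slack $k \ge |S|+2$, so the crude bound $2|F| \le 2(r-1)$ still closes the inequality.
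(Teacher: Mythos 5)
Your argument is correct: the paper itself gives no proof of this lemma (it is imported from the cited reference \cite{jin2017covers}), so there is nothing internal to compare against, but your Tutte-theorem counting is exactly the standard proof of this fact. All the delicate points check out -- the parity upgrade $o(H-S)\ge |S|+2$ is valid because $|V(G)|$ is even, the identity $|\partial_G(V(C_i))| = |[V(C_i),S]_H| + |\partial_G(V(C_i))\cap F|$ holds since every surviving edge leaving a component of $H-S$ must end in $S$, and the factor $2|F|$ from double-counting edges of $F$ is absorbed precisely by the extra $+2$ in $k\ge|S|+2$, yielding $0\le -2$.
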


Recall that, for an $r$-graph $G$ and an odd set $X\subseteq V(G)$, an edge-cut $\partial_G(X)$ is \emph{tight} if it consists of exactly $r$ edges. 

\begin{lem}\label{Proposition-tightcut-to-tightcut}
	Let $r\geq3$, let $G,H$ be connected $r$-graphs and let $f$ be an $H$-coloring of $G$. If $F\subseteq E(G)$ is a tight edge-cut in $G$, then $f(F)$ is a tight edge-cut in $H$.
\end{lem}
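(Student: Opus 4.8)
The plan is to pin down the image cut explicitly as $\partial_H(f_V(X))$ and then show it has exactly $r$ edges around a set of odd order. Write $F=\partial_G(X)$ with $|X|$ odd and $|F|=r$, and set $Y:=f_V(X)$; note $f_V$ is well defined because $G$ is class $2$, so $H$ is class $2$ by Observation \ref{obs:coloring_basics}$(i)$ and has no two vertices with the same boundary. The first step is the containment $\partial_H(Y)\subseteq f(F)$, which is purely local: if $g\in\partial_H(Y)$ and its endpoint in $Y$ is $f_V(u)$ for some $u\in X$, then $g\in\partial_H(f_V(u))=f(\partial_G(u))$, so $g=f(e')$ for an edge $e'=uu'$; since the other endpoint of $g$ lies outside $Y=f_V(X)$, the vertex $u'$ cannot lie in $X$, whence $e'\in F$ and $g\in f(F)$. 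In particular $|\partial_H(Y)|\le |f(F)|\le |F|=r$.

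The point of this containment is that it closes the whole proof as soon as one knows that $|Y|$ is \emph{odd}. Indeed, if $|Y|$ is odd then $|\partial_H(Y)|\ge r$ because $H$ is an $r$-graph, so the chain $r\le |\partial_H(Y)|\le |f(F)|\le |F|=r$ collapses to equalities. This simultaneously gives $|f(F)|=r$ (so $f$ is injective on $F$), $f(F)=\partial_H(Y)$ (a subset of equal finite cardinality), and $|\partial_H(Y)|=r$ around the odd set $Y$, i.e. exactly the assertion that $f(F)$ is a tight edge-cut of $H$.

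To control the parity of $|Y|$ I would transport a perfect matching through $f$. By Lemma \ref{lem:1-fator_avoiding_r-1_edges} every edge of the $r$-graph $H$ lies in a perfect matching, and by Observation \ref{obs:coloring_basics}$(ii)$ the preimage $M=f^{-1}(N)$ of a perfect matching $N$ of $H$ is a perfect matching of $G$. Since $|X|$ is odd, $M$ crosses $F$ an odd number of times, and the aim is to push this count forward along $f$ to the cut $\partial_H(Y)$ (using $\partial_H(Y)\subseteq f(F)$ and $f(M)\subseteq N$) so that $N$ crosses $\partial_H(Y)$ an odd number of times, which forces $|Y|$ odd via Observation \ref{Observation-same-parity}. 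The clean way to organise this is to first observe that, because $f$ is a bijection on every vertex-star, $f_V$ is a surjection onto $V(H)$ all of whose fibres have a common size $c$ (the fibre sizes agree across each edge lying in a perfect matching, hence are constant by connectivity, and no vertex is unused); thus $(f_V,f)$ is a $c$-sheeted covering map and $|Y|=|X|$ precisely when $c=1$.

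The hard part will be showing that the tightness of $F$ forces $c=1$, i.e. that $f$ cannot \emph{fold} the cut by identifying distinct vertices of $X$, or vertices of $X$ with vertices of $X^c$. One subcase is immediate: if $X$ were a union of full fibres, then $\partial_G(X)=f^{-1}(\partial_H(f_V(X)))$ would have $c\,|\partial_H(f_V(X))|\ge c\cdot r$ edges (again by Observation \ref{Observation-same-parity} applied to the odd set $f_V(X)$ in the $r$-graph $H$), which together with $|F|=r$ yields $c\le 1$. The genuine obstacle is the remaining possibility that $X$ splits a fibre; here I expect to use that $F$ is a \emph{minimum} odd cut, combined with the single-crossing perfect matchings supplied by Lemma \ref{lem:1-fator_avoiding_r-1_edges} (for each $e\in F$ a perfect matching of $G$ meeting $F$ only in $e$), to contradict minimality. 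Once $c=1$ is established, $f_V$ is injective, $M\cap F$ maps bijectively onto $N\cap\partial_H(Y)$, the crossing count transports without loss, $|Y|=|X|$ is odd, and the proof concludes as in the second paragraph.
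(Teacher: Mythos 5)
Your reduction in the second paragraph is fine: if you knew that $f(F)$ equals $\partial_H(Y)$ for an \emph{odd} set $Y$, the chain $r\le\vert\partial_H(Y)\vert\le\vert f(F)\vert\le\vert F\vert=r$ would finish the proof. But the route you propose to get there does not work, and the gap is not just the subcase you flag as ``the hard part''. An $H$-coloring is not a covering map: nothing in the definition forces the fibres of $f_V$ to have a common size, let alone size $1$. For an edge $g=uv$ in a perfect matching $N$ of $H$, the preimage $f^{-1}(g)$ is a set of $M$-edges each of whose endpoints maps into $\{u,v\}$, but an edge may have \emph{both} endpoints in the same fibre, so all you get is $2\vert f^{-1}(g)\vert=\vert f_V^{-1}(u)\vert+\vert f_V^{-1}(v)\vert$, not equality of the two fibre sizes; the extreme case of a class-$1$ graph coloured into a single vertex-star of $H$ has one fibre of size $\vert V(G)\vert$ and all others empty. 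Worse, the target $c=1$ is false under the hypotheses of the lemma: take $G$ obtained from two disjoint copies of $P-v$ joined by three edges (a connected $3$-graph on $18$ vertices with a non-trivial tight cut) and any $P$-colouring of it; then $f_V$ maps $18$ vertices onto at most $10$, so it cannot be injective and $M\cap F$ need not map bijectively onto $N\cap\partial_H(Y)$. Your first containment $\partial_H(Y)\subseteq f(F)$ also has a hole: the edge $e'=uu'$ with $f(e')=g$ only gives $f_V(u')\in\{w,w'\}$ where $w,w'$ are the two ends of $g$; if $f_V(u')=w$ (the end inside $Y$), then $u'$ may lie in $X$ and $e'\notin F$, so that particular witness fails and you have not produced any edge of $F$ mapping to $g$. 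In short, identifying $f(F)$ with $\partial_H(f_V(X))$ is not justified and is the wrong frame.

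The paper's proof avoids $f_V(X)$ entirely and works on the $H$-side. First, $\vert f(F)\vert=r$: if $\vert f(F)\vert<r$, Lemma \ref{lem:1-fator_avoiding_r-1_edges} gives a perfect matching of $H$ avoiding $f(F)$, whose preimage (Observation \ref{obs:coloring_basics}\,$(ii)$) is a perfect matching of $G$ avoiding the odd cut $F$, which is impossible. Then one studies the components of $H-f(F)$: a circuit of $H$ meeting $f(F)$ in an odd number of edges would pull back, via Observation \ref{obs:coloring_basics}\,$(iii)$, to a $2$-regular subgraph of $G$ meeting $F$ an odd number of times, which is impossible; this shows each edge of $f(F)$ joins distinct components and that the component-contraction of $H$ along $f(F)$ has no odd circuit. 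A counting/parity argument then rules out more than two components in the odd case, and in the all-even case produces an even vertex set $W$ with $f(F)=\partial_H(W)$, so that every perfect matching of $H$ meets $f(F)$ evenly while its preimage must meet $F$ oddly --- a contradiction. If you want to salvage your approach, you need an argument of this kind that extracts the odd set on the $H$-side from $f(F)$ itself rather than from the (uncontrollable) image $f_V(X)$.
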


\begin{proof}
	Since $ F $ is a tight edge-cut, we have $|f(F)|\leq r.$
	Suppose that $|f(F)|<r.$ By Lemma \ref{lem:1-fator_avoiding_r-1_edges}, $H-f(F)$ has a perfect matching  $M$. Thus, $f^{-1}(M)$ is a perfect matching of $G$ such that $f^{-1}(M)\cap F = \emptyset$, a contradiction. Therefore, $|f(F)|=r$, and let $H_1,\dots, H_m$ be the components of $H-f(F)$.
	
	We first claim that the two endvertices of each edge in $ f(F)$ are in different components of $H-f(F)$. By contradiction, suppose that there is an edge $xy\in f(F)$ such that $x$ and $y$ are on the same component $H'$ of $H-f(F)$. Let $T$ be an $xy$-path contained in $H'$. Then, $f^{-1}(E(T)\cup \{xy\} )$ induces a $2$-regular subgraph in $G$ (see Observation \ref{obs:coloring_basics} $ (iii) $) and intersects $F$ exactly once, a contradiction. 
	
	The remaining proof is split into two cases as follows.
	
	{\bf Case 1.}\ $H-f(F)$ has a component of odd order.
	
	If $m>2$, then there is an odd component $H'$ with $\vert \partial_G(V(H'))| <r$, since $H-f(F)$ has at least two components of odd order, a contradiction. Hence, $H-f(F)$ has exactly two components, which are of odd order and therefore, $f(F)$ is a tight edge-cut in $H$.
	
	{\bf Case 2.}\ Every component of $H-f(F)$ is of even order. 
	
	Let $\tilde{H}$ be the graph obtained from $H$ by identifying all vertices in $V(H_i)$ to a new vertex for each $i \in \{1,\ldots,m\}$.
	Since every component is of even order, $\tilde{H}$ is an eulerian graph on $|f(F)|=r$ edges. 
	
	Now, we shall prove that $\tilde{H}$ is bipartite.
	Suppose by contradiction that $\tilde{H}$ has an odd circuit of length $2t+1$. This means that there is an odd number of components $H_{i_1},\dots,H_{i_{2t+1}}$ in $H-f(F)$ such that, for all $j\in \Z_{2t+1}$ there is an edge $x_jy_{j+1}\in f(F)$ such that $x_j\in V(H_{i_j})$ and $y_{j+1}\in V(H_{i_{j+1}})$. Moreover, for all $j\in\Z_{2t+1}$ there is an $x_jy_j$-path $T_j$ contained in the component $H_{i_j}$, i.e.\ such that $E(T_j)\cap f(F)=\emptyset$. Consider the circuit $C$ induced by    $x_jy_{j+1}$ and all edges of $T_j$ for all $j\in\Z_{2t+1}$. Then $|E(C)\cap f(F)|=2t+1$ and $f^{-1}(E(C))$ induces a $2$-regular subgraph in $G$ such that $|F\cap f^{-1}( E(C))|=2t+1$, a contradiction.	
	
	Since $\tilde{H}$ is a bipartite graph, we can assume without loss of generality that there is an $s\in\{1,\dots,m-1\}$ such that $f(F) = \partial_H(W)$, where $W = V(H_1)\cup\dots\cup V(H_s)$. Note that $|W|$ is even since every component of $H-f(F)$ has even order. Thus, a perfect matching $M$ of $H$ is such that $|M\cap \partial_H(W)| = |M\cap f(F)|$ is even. But then $|f^{-1}(M)\cap F|$ is even as well, a contradiction.
\end{proof}

\begin{lem}\label{Lemma-H-noclass1-subgraph}
	Let $r \geq 3$, let $ G $ and $ H $ be two $ r $-graphs, and let $ X $ be a subset of $ V(H) $ such that $ \partial_H(X) $ is a tight cut and $ \chi'(H/X^c)= r$. If  $H\prec G $,  then $H/X \prec G $.
\end{lem}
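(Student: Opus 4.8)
The plan is to start from an $H$-coloring $f\colon E(G)\to E(H)$ and to \emph{reroute} precisely those edges of $G$ whose $f$-image lies strictly inside $X$ (and hence disappears as a loop under the contraction $H/X$), using the class $1$ structure of $H/X^c$ to do so consistently. First I would record two structural facts. Since $\partial_H(X)$ is a tight cut, $H/X$ is again a connected $r$-graph: the contracted vertex $w_X$ has degree $|\partial_H(X)|=r$, every other vertex keeps degree $r$, and the odd-cut condition for $H/X$ follows from that of $H$ by distinguishing whether an odd set $Y\subseteq V(H/X)$ contains $w_X$ (then test $(Y\setminus\{w_X\})\cup X$, which is odd) or not (then test $Y$ itself). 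Second, as $\chi'(H/X^c)=r$ and $H/X^c$ is $r$-regular, it decomposes into $r$ pairwise disjoint perfect matchings $N_1,\dots,N_r$. The edges of $H/X^c$ incident with $w_{X^c}$ are exactly the $r$ cut edges of $\partial_H(X)$, and each $N_i$ covers $w_{X^c}$ exactly once; hence the $N_i$ partition $\partial_H(X)$, each containing a unique cut edge $c_i\in N_i\cap\partial_H(X)$.

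Next I would define $f'\colon E(G)\to E(H/X)$ via the natural identification of $E(H/X)$ with the edges of $H$ that are either contained in $X^c$ or lie in $\partial_H(X)$. For an edge $e\in E(G)$ with $f(e)$ inside $X^c$, set $f'(e)=f(e)$. For every other edge $e$, the image $f(e)$ is an edge of $H/X^c$ (it lies inside $X$ or in $\partial_H(X)$), hence belongs to a unique $N_i$, and I set $f'(e)=c_i$. Note that if $f(e)$ is itself a cut edge then $f(e)=c_i$, so $f'$ fixes all cut edges; this guarantees consistency with the first rule.

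Finally I would verify that $f'$ is an $(H/X)$-coloring by checking the vertex condition at each $v\in V(G)$; let $u=f_V(v)$ be the vertex of $H$ with $f(\partial_G(v))=\partial_H(u)$. If $u\in X^c$, then $\partial_H(u)$ consists of edges inside $X^c$ together with cut edges, all of which are fixed by $f'$, so $f'(\partial_G(v))=\partial_{H/X}(u)$. If $u\in X$, then the $r$ edges of $\partial_H(u)$ all lie in $H/X^c$ and, being incident with the single vertex $u$, lie in $r$ \emph{distinct} matchings $N_i$; therefore $f'$ sends them bijectively onto $\{c_1,\dots,c_r\}=\partial_H(X)=\partial_{H/X}(w_X)$. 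In both cases $f'(\partial_G(v))$ is the full edge-set around a single vertex of $H/X$, so $f'$ respects vertices and is proper, i.e.\ $H/X\prec G$.

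I expect the crux to be the case $u\in X$: this is exactly where the edges of $G$ formerly mapped inside $X$ must be absorbed into $w_X$, and it is the hypothesis $\chi'(H/X^c)=r$ that makes the rerouting $e\mapsto c_i$ simultaneously well-defined and bijective around each such vertex. The remaining points — that $H/X$ is an $r$-graph and that the two defining rules for $f'$ agree on cut edges — are routine.
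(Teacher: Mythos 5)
Your proposal is correct and follows essentially the same route as the paper: both use the hypothesis $\chi'(H/X^c)=r$ to partition $E(H[X])\cup\partial_H(X)$ into $r$ matchings each containing exactly one cut edge, and then reroute every edge of $G$ whose image lies in the $i$-th matching onto that matching's distinguished cut edge, verifying the vertex condition separately for $f_V(v)\in X^c$ and $f_V(v)\in X$. The only difference is cosmetic (you phrase the decomposition as perfect matchings of $H/X^c$ and additionally check that $H/X$ is an $r$-graph, which the paper leaves implicit).
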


\begin{proof}
	Assume that $f$ is an $H$-coloring of $G$. 
	Label the edges of $\partial_H(X)$ as $ e_1,\ldots, e_r $.
	Since $ \chi'(H/X^c)= r$,  
	the subset  $ E(H[X])\cup \partial_H(X)$ of $ E(H) $ can be partitioned into $ r $ pairwise disjoint matchings, denoted by $ M_1,\ldots, M_r $, such that each edge of $ \partial_H(X) $ is contained in exactly one of them. Without loss of generality, we may assume $ e_i\in M_i $ for each $i\in\{1,\ldots,r\} $.  Note that $ E(G)= f^{-1}(E(H))= f^{-1}(E(H[X^c]))\cup f^{-1}(M_1)\cup\ldots\cup f^{-1}(M_r)$.
	Moreover, for convenience, every edge and every vertex  of $ H/X $ is
	labeled as in $H$. We define a mapping $f'\colon E(G)\to E(H/X)$ as follows. For every $e\in E(G)$, set  $$f'(e)=\begin{cases}
		f(e) & \text{ if } e\in  f^{-1}(E(H[X^c])); \\
		e_i & \text{ if } e\in f^{-1}(M_i), \text{ for } i\in\{1,\ldots,r\}.
	\end{cases}$$ 
	To conclude the proof, we shall show that  $f'$ is an $ H/X $-coloring of $ G $.  Let $ v$ be a vertex of $V(G) $. If $f(\partial_G(v)) =\partial_H(u)$ for some vertex $ u\in  X^c\subset V(H) $, then $f'(\partial_G(v)) =f(\partial_G(v)) =\partial_H(u)=\partial_{H/X}(u)$ by the definition of $ f' $. If $f(\partial_G(v)) =\partial_H(u)$ for some vertex $ u\in X $, then  the image  under $ f $ of each edge of $ \partial_G(v) $ is contained in one of $ M_1, \ldots, M_r $. Hence, the image  under $ f' $ of each edge of $ \partial_G(v) $ appears once in $ \partial_{H/X}(w_X) $. This implies $f'(\partial_G(v))=\partial_{H/X}(w_X)$. Thus, $f'$ is an $ H/X $-coloring of $ G $.
\end{proof}

For any graph $ G $, the number of isolated vertices of $ G $ is denoted by $ iso(G) $.
A simple graph $ H $ is \emph{regularizable} if we can obtain a regular graph from $H $ by replacing each edge of $ H $ by a nonempty set of parallel edges. We need the following lemma, which follows from two results of \cite{Berge-Vergnas-1978} and \cite{Pulleyblank-1979}. The equivalence of the first two statements is shown in \cite{Berge-Vergnas-1978}; the equivalence of the first and the third statement is shown in \cite{Pulleyblank-1979}.

\begin{lem}\label{Lemma-regular=factor}
	Let $ G $ be a simple connected graph which is not bipartite with two partition sets of the same cardinality. The following statements are equivalent:
	\begin{itemize}
		\item  $ iso(G-S) < |S|  $, for all $ S \subseteq V(G)$.
		\item   $ G $ is regularizable \cite{Berge-Vergnas-1978}.
		\item for every $ v\in V(G) $, both $ G-v $ and $ G $ have a $ \{K_{1,1}, C_m\colon m\geq3\} $-factor \cite{Pulleyblank-1979}.
	\end{itemize}
\end{lem}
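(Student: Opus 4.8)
The plan is to obtain the three-way equivalence simply by chaining together the two equivalences that the literature already provides, so that the entire argument reduces to transitivity together with a careful check that the hypotheses of the two source theorems are met. First I would invoke the Berge--Las Vergnas characterization of regularizable graphs \cite{Berge-Vergnas-1978}: for a simple connected graph $G$ that is not a balanced bipartite graph, $G$ is regularizable if and only if $iso(G-S) < |S|$ for every nonempty $S \subseteq V(G)$. This is precisely the equivalence of the first two bullet points. Next I would invoke Pulleyblank's theorem \cite{Pulleyblank-1979}, which asserts that $iso(G-S) < |S|$ for all nonempty $S$ holds if and only if both $G$ and $G-v$ admit a $\{K_{1,1}, C_m\colon m\geq3\}$-factor for every $v \in V(G)$; this is the equivalence of the first and third bullet points. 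Combining the two yields the equivalence of all three statements.

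The second step in the argument is to verify that the excluded case in the hypothesis is exactly the one forced by the source theorems. The reason balanced bipartite graphs must be excluded is that such a graph is regularizable (one can replace edges by parallel edges to reach a regular graph) yet fails the deficiency condition: taking $S$ to be one side of the bipartition makes $G-S$ consist entirely of isolated vertices, so $iso(G-S) = |S|$ and the strict inequality is violated. Hence, outside the balanced bipartite case, regularizability coincides with the deficiency condition, which is why the restriction in the hypothesis is genuinely necessary for the first two statements to agree rather than merely convenient.

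The main obstacle I anticipate is reconciling the precise formulations across the two papers, since each states its condition in slightly different language. In particular, the condition $iso(G-S) < |S|$ cannot hold at $S = \emptyset$ (a connected graph of order at least $2$ has no isolated vertices, so $iso(G) = 0 \not< 0$), so it must be read throughout as a statement about nonempty $S$, and I would confirm that both cited theorems adopt this convention and that the relevant deficiency parameter is literally $\max_{S \ne \emptyset}\bigl(iso(G-S) - |S|\bigr)$. The one genuinely illuminating point to record is why the factor condition involves both $G$ and every $G-v$: for a fixed vertex $v$ and any $T \subseteq V(G)\setminus\{v\}$ one has $iso\bigl((G-v)-T\bigr) = iso\bigl(G-(T\cup\{v\})\bigr)$, so the strict bound applied to the nonempty set $T\cup\{v\}$ gives $iso((G-v)-T) \le |T|$; thus the strictness of the inequality (as opposed to the weaker $iso(G-S)\le|S|$, which only guarantees a factor of $G$ itself) is exactly accounted for by additionally requiring a factor in every vertex-deleted subgraph. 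Once these bookkeeping matches are confirmed, the equivalence follows with no further graph-theoretic work.
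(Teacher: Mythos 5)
Your proposal matches the paper exactly: the paper gives no independent argument but simply cites the Berge--Las Vergnas result for the equivalence of the first two statements and Pulleyblank's result for the equivalence of the first and third, combining them by transitivity just as you do. Your additional remarks on the nonempty-$S$ convention and on why balanced bipartite graphs must be excluded are correct sanity checks but add nothing beyond what the citations already supply.
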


\begin{lem}\label{Lemma-contract-H-class1}
	Let $r\geq 3$, let $ G$ and $ H $ be $ r $-graphs, where $H$ is connected, and let $S \subseteq V(G) $ such that $ \partial_G(S) $ is a tight cut and $ G[S] $ has no $ \{K_{1,1}, C_m:m\geq3\} $-factor. If $ G $ has an $ H $-coloring $f\colon E(G)\to E(H)$
	and $\partial_H(X)=f(\partial_G(S))$ for an $X \subseteq V(H)$, then   $H/X$ or $H/X^c$ is a bipartite graph with two partition sets of the same cardinality.
\end{lem}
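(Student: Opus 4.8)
The plan is to argue by contradiction. Suppose that neither $H/X$ nor $H/X^c$ is a bipartite graph with two partition sets of the same cardinality; I will then construct a $\{K_{1,1}, C_m\colon m\geq3\}$-factor of $G[S]$, contradicting the hypothesis on $G[S]$.

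First I would record the structure of the cut $\partial_H(X)$. Since $\partial_G(S)$ is a tight edge-cut of $G$, Lemma~\ref{Proposition-tightcut-to-tightcut} gives that $\partial_H(X)=f(\partial_G(S))$ is a tight edge-cut of $H$; in particular $|\partial_H(X)|=r$ and $X,X^c\neq\emptyset$. Hence both $H/X$ and $H/X^c$ are connected and $r$-regular, the contracted vertices $w_X$ and $w_{X^c}$ having degree $|\partial_H(X)|=r$. Passing to the underlying simple graphs $\widetilde{H/X}$ and $\widetilde{H/X^c}$, each is connected and \emph{regularizable}, since it is obtained from an $r$-regular (loopless) multigraph by deleting parallel edges. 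Moreover, a connected multigraph is bipartite with equal parts if and only if its simplification is, so by assumption neither $\widetilde{H/X}$ nor $\widetilde{H/X^c}$ is bipartite with two equal partition sets.

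Next I would extract a $\{K_{1,1}, C_m\}$-factor on each side of the cut. Applying Lemma~\ref{Lemma-regular=factor} to the simple, connected, regularizable, non-bipartite-with-equal-parts graph $\widetilde{H/X^c}$ with the vertex $v=w_{X^c}$ yields a $\{K_{1,1}, C_m\colon m\geq3\}$-factor of $\widetilde{H/X^c}-w_{X^c}=\widetilde{H[X]}$. Replacing each of its edges by an arbitrary parallel representative in $H[X]$ (single edges stay single edges and cycles stay cycles of length at least $3$) produces a $\{K_{1,1}, C_m\}$-factor $F_X$ of $H[X]$. Symmetrically, from $\widetilde{H/X}$ I obtain a $\{K_{1,1}, C_m\}$-factor $F_{X^c}$ of $H[X^c]$.

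Finally I would combine and pull back. Since $F_X$ and $F_{X^c}$ lie entirely inside $X$ and $X^c$ respectively, their union $F=F_X\cup F_{X^c}$ is a spanning $\{K_{1,1}, C_m\}$-factor of $H$ with $E(F)\cap\partial_H(X)=\emptyset$. By Observation~\ref{obs:coloring_basics}\,(iv), $f^{-1}(E(F))$ induces a $\{K_{1,1}, C_m\}$-factor $G'$ of $G$. The point is that $G'$ avoids the cut: if $e\in\partial_G(S)$ then $f(e)\in f(\partial_G(S))=\partial_H(X)$, which is disjoint from $E(F)$, so $e\notin f^{-1}(E(F))$. Thus no component of $G'$ crosses $\partial_G(S)$, and the restriction $G'[S]$ is a $\{K_{1,1}, C_m\}$-factor of $G[S]$ --- the desired contradiction. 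I expect the main obstacle to be exactly this last idea: recognizing that the hypothesis on $G[S]$ must be contradicted by a factor of $H$ that \emph{completely avoids} the tight-cut edges, and that such a factor is supplied precisely by the deletion clause ``$G-v$ has a factor'' of Lemma~\ref{Lemma-regular=factor} applied to the two contractions. The remaining care is bookkeeping: transferring regularizability and the bipartite-with-equal-parts property between the multigraphs $H/X$, $H/X^c$ and their simplifications, so that Lemma~\ref{Lemma-regular=factor}, stated for simple graphs, is applicable.
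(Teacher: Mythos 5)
Your proposal is correct and follows essentially the same route as the paper's proof: both contract to $H/X$ and $H/X^c$, invoke Lemma~\ref{Proposition-tightcut-to-tightcut} for tightness, apply Lemma~\ref{Lemma-regular=factor} to obtain $\{K_{1,1},C_m\colon m\geq 3\}$-factors of $H/X-w_X$ and $H/X^c-w_{X^c}$, take their union as a factor of $H$ avoiding $\partial_H(X)$, and pull it back via Observation~\ref{obs:coloring_basics}\,(iv) to contradict the hypothesis on $G[S]$. Your extra care about passing between the multigraphs and their underlying simple graphs is a detail the paper glosses over, but it does not change the argument.
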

\begin{proof}
	Suppose to the contrary that both $H/X$ and $H/X^c$ are not bipartite graphs with two partition sets of the same cardinality.   By Lemma \ref{Proposition-tightcut-to-tightcut},  the edge-cut $\partial_H(X)$ is tight and so both $H/X$ and $H/X^c$ are $ r $-regular. Thus, the underling graphs of $H/X$ and $H/X^c$ are both regularizable and hence,  both $H/X-w_X$ and $H/X^c-w_{X^c}$ have a $ \{K_{1,1}, C_m:m\geq3\} $-factor, by Lemma \ref{Lemma-regular=factor}. Let $ H' $ be the union of these two factors. Note that $ H' $ is a $ \{K_{1,1}, C_m:m\geq3\} $-factor of $ H $, which contains no edge of $ \partial_H(X) $. Since $\partial_H(X)=f(\partial_G(S))$ and by Observation  \ref{obs:coloring_basics} $ (iv) $, $ G $ has a $ \{K_{1,1}, C_m:m\geq3\} $-factor, which contains no edge of $ \partial_G(S) $. This is a contradiction to the assumption that $ G[S] $ has no $ \{K_{1,1}, C_m:m\geq3\} $-factor.
\end{proof}

	Let  $G$ be an $r$-regular graph with a vertex $v\in V(G)$. A \emph{Meredith extension} of $ G $ at $ v $ is the following operation. Delete the vertex $ v $ from $ G $ and add a copy $K$ of the complete bipartite graph $ K_{r,r-1} $. Finally add $ r $ edges between $ V(G-v)$ and $ V(K) $ such that the resulting graph is $ r $-regular.

\begin{lem}[Rizzi \cite{rizzi1999indecomposable}]\label{Lemma-keep-r-graph}
	Let $ G $ be a graph and $ X\subseteq V(G) $ with $ |X| $ odd. If $G/X$ and $G/X^c$ are both $ r $-graphs, then $ G $ is an $r$-graph.
\end{lem}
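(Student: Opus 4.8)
The plan is to verify the two defining properties of an $r$-graph for $G$ separately: that $G$ is $r$-regular, and that every odd edge-cut of $G$ has size at least $r$. The whole argument will rest on an uncrossing step via the submodular inequality stated just before the lemma, combined with a parity observation that is easy to miss but indispensable.

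First I would settle regularity. Contracting $X$ leaves the degree of every vertex of $X^c$ unchanged, while the new vertex $w_X$ acquires degree $|\partial_G(X)|$; since $G/X$ is $r$-regular this shows every vertex of $X^c$ has degree $r$ in $G$ and that $|\partial_G(X)|=r$, i.e.\ $\partial_G(X)$ is a tight cut. Symmetrically, $G/X^c$ being $r$-regular shows every vertex of $X$ has degree $r$, so $G$ is $r$-regular. At this point I would emphasise the crucial parity fact: every $r$-graph has a perfect matching and hence even order, so $G/X$, having order $|X^c|+1$, forces $|X^c|$ to be odd. Together with $|X|$ odd this makes $|V(G)|$ even, so the complement of any odd set is again odd.

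Next I would record the only two consequences of the hypothesis that the argument uses. If $S\subseteq X$ is odd then, viewed as a vertex set of $G/X^c$ that avoids $w_{X^c}$, it satisfies $\partial_{G/X^c}(S)=\partial_G(S)$, whence $|\partial_G(S)|\ge r$; symmetrically $|\partial_G(S)|\ge r$ for every odd $S\subseteq X^c$. With these in hand, let $Y$ be an arbitrary odd set with $\emptyset\ne Y\ne V(G)$ and set $A=Y\cap X$ and $B=Y\cap X^c$. Since $|Y|$ is odd, exactly one of $A,B$ has odd cardinality; by the $X\leftrightarrow X^c$ symmetry I may assume $|A|$ is odd (so $|B|$ is even). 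Then $A$ is a nonempty odd subset of $X$, and $X^c\setminus B$ is a nonempty odd subset of $X^c$, its cardinality being $|X^c|-|B|$, which is odd. Applying the submodular inequality to $Y$ and $X$ gives
\[
|\partial_G(Y)|+|\partial_G(X)|\ \ge\ |\partial_G(Y\cap X)|+|\partial_G(Y\cup X)|\ =\ |\partial_G(A)|+|\partial_G(X\cup B)|.
\]
Because $X\cup B$ is the complement of $X^c\setminus B$, we have $|\partial_G(X\cup B)|=|\partial_G(X^c\setminus B)|\ge r$, while $|\partial_G(A)|\ge r$; since $|\partial_G(X)|=r$, this rearranges to $|\partial_G(Y)|\ge r$. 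As $Y$ was an arbitrary odd set, $G$ is an $r$-graph.

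The step I expect to be the genuine obstacle is not the submodular manipulation but the nonemptiness bookkeeping, which is precisely where the oddness of $|X^c|$ is spent. The bound $|\partial_G(X\cup B)|\ge r$ is obtained by passing to the complementary set $X^c\setminus B$ and invoking that it is a \emph{nonempty} odd subset of $X^c$; were $|X^c|$ even this could break down, since then $B$ might equal $X^c$, making $X\cup B=V(G)$ and its boundary empty. This is not a cosmetic concern: without the even order of $G$ the conclusion genuinely fails (one can exhibit a $4$-regular graph of odd order whose two contractions are both $r$-graphs yet which has an odd set of boundary size $2$). Hence the proof must first extract $|X^c|$ odd from the hypothesis — through the even order of the $r$-graph $G/X$ — before the uncrossing argument can be applied safely.
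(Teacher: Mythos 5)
The paper does not prove this lemma; it is quoted from Rizzi's paper \cite{rizzi1999indecomposable}, so there is no in-paper argument to compare against. Your proof is correct and self-contained: the regularity step, the derivation of $|\partial_G(X)|=r$, the identification $|\partial_{G/X^c}(S)|=|\partial_G(S)|$ for odd $S\subseteq X$, and the uncrossing via the submodular inequality (which the paper records just before Lemma \ref{Lem:r-graph lifting}) all check out, and the parity bookkeeping ($|X^c|$ odd, hence $X^c\setminus B$ nonempty) is handled correctly. One small quibble with your closing remark: the advertised ``counterexample'' of a $4$-regular graph of odd order whose two contractions are both $r$-graphs cannot exist, since $G/X$ having even order already forces $|X^c|$ odd --- the parity you need is an automatic consequence of the hypotheses rather than an extra assumption that could fail; but this is a side comment and does not affect the validity of the proof.
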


\begin{theo}\label{theo:reduction_simple_case}
	Let $r\geq 3$ and let $\ca H$ be a set of connected $r$-graphs such that every  $H\in \ca H$ does not contain a  non-trivial tight edge-cut $\partial_H(X)$ such that  $H/X$ or $H/X^c$ is class 1.
	If 
	every connected simple $r$-graph can be colored by an element of $\ca H$, 
then every connected $r$-graph can be colored by an element of $\ca H$.
\end{theo}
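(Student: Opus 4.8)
The plan is to reduce the general statement to the assumed simple case by means of the Meredith extension, which simultaneously destroys multiple edges and plants tight edge-cuts whose images are forced to be trivial. Let $G$ be an arbitrary connected $r$-graph. I would apply a Meredith extension at \emph{every} vertex of $G$, obtaining a graph $G'$: each vertex $v$ is replaced by a copy $K_v\cong K_{r,r-1}$, and the $r$ edges of $\partial_G(v)$ are reattached, one each, to the $r$ degree-$(r-1)$ vertices of $K_v$ (call this side $A_v$). If each edge $e=vw$ of $G$ is routed to distinct vertices of $A_v$ and $A_w$, then $G'$ is simple and clearly connected. That $G'$ is an $r$-graph follows by induction on the number of extended vertices from Lemma \ref{Lemma-keep-r-graph}, using that contracting a single copy $S_v:=V(K_v)$ returns the extension of $G$ at the remaining vertices, while $G'/S_v^c\cong K_{r,r}$; both are $r$-graphs. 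By hypothesis there is an $H\in\ca H$ and an $H$-coloring $f$ of $G'$.

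The heart of the argument is to show that $f$ ``collapses'' each copy $K_v$. For every $v$, the set $S_v$ has odd cardinality $2r-1$ and $\partial_{G'}(S_v)$ consists of the $r$ reattached edges, so it is a tight edge-cut; moreover $G'[S_v]\cong K_{r,r-1}$ has no $\{K_{1,1}, C_m\colon m\geq3\}$-factor, since such a factor would cover an even number of vertices whereas $|S_v|$ is odd. By Lemma \ref{Proposition-tightcut-to-tightcut}, $f(\partial_{G'}(S_v))=\partial_H(X_v)$ is a tight edge-cut of $H$, and then Lemma \ref{Lemma-contract-H-class1} forces $H/X_v$ or $H/X_v^c$ to be bipartite with parts of equal size; since the tightness of the cut makes it $r$-regular, it is class $1$. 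The defining property of $\ca H$ now implies that this tight edge-cut cannot be non-trivial. Therefore $X_v$ or $X_v^c$ is a single vertex $u_v$, i.e.\ $f(\partial_{G'}(S_v))=\partial_H(u_v)$.

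Finally I would transfer the coloring back to $G$. The routing gives a canonical bijection between $E(G)$ and the reattached edges of $G'$, sending $e$ to the corresponding edge $\hat e$; define $f'(e):=f(\hat e)$. For each $v\in V(G)$ the induced bijection $\partial_G(v)\leftrightarrow\partial_{G'}(S_v)$ yields $f'(\partial_G(v))=f(\partial_{G'}(S_v))=\partial_H(u_v)$, which is exactly the vertex condition of an $H$-coloring with $f_V(v)=u_v$; and since $|\partial_{G'}(S_v)|=r=|\partial_H(u_v)|$, the map $f$ is injective on $\partial_{G'}(S_v)$, so $f'$ is proper at $v$. Hence $f'$ is an $H$-coloring of $G$ with $H\in\ca H$, which proves the claim.

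The main obstacle, and the only genuinely delicate point, is establishing that the image of each Meredith cut is trivial: this is precisely where the absence of a $\{K_{1,1}, C_m\colon m\geq3\}$-factor in $K_{r,r-1}$ must be combined with the class-$1$ exclusion built into $\ca H$, via Lemmas \ref{Proposition-tightcut-to-tightcut} and \ref{Lemma-contract-H-class1}. Once triviality is in hand, the pullback is essentially bookkeeping. One should also verify at the outset that the simultaneous extension really yields a simple $r$-graph, but this is routine.
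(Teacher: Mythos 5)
Your proposal is correct and follows essentially the same route as the paper: Meredith extension at every vertex, Lemma \ref{Lemma-keep-r-graph} for the $r$-graph property, Lemmas \ref{Proposition-tightcut-to-tightcut} and \ref{Lemma-contract-H-class1} to force each Meredith cut onto a class-1 side and hence onto a trivial cut, then pulling the coloring back. The only nitpick is your parity argument for why $K_{r,r-1}$ has no $\{K_{1,1}, C_m\colon m\geq3\}$-factor: a circuit component can in general cover an odd number of vertices, so you should add that $K_{r,r-1}$ is bipartite and hence all its circuits are even, which is what makes the count even and contradicts $|S_v|=2r-1$.
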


\begin{proof}
Let $G$ be an arbitrary $r$-graph. By applying a Meredith extension on every vertex of $ G $, we obtain a simple $ r $-regular graph $ G^e $. From the fact that both $ G $ and $ K_{r,r} $ are $ r $-graphs, we know that $ G^{e} $ is also an $ r $-graph by Lemma \ref{Lemma-keep-r-graph}. Hence, there is $H \in \ca H$ such that $H \prec G^{e} $. Let $f$ be an $ H $-coloring of $G^e$. Note that for any induced subgraph $ G' $ of $ G^e $ isomorphic to $ K_{r,r-1}$, the edge-cut $ \partial_{G^e }  (V(G'))$   is tight, and so $f(\partial_{G^e }  (V( G')))$ is also tight in $ H $ by Lemma \ref{Proposition-tightcut-to-tightcut}. Let $X \subset V(H)$ such that $\partial_{H}(X)=f(\partial_{G^e }  (V( G')))$. Since  $ K_{r,r-1} $  contains no $ \{K_{1,1}, C_m\colon m\geq3\} $-factor, Lemma \ref{Lemma-contract-H-class1} implies that  $H/X$ or $H/X^c$ is a bipartite graph with two partition sets of the same cardinality. In particular,   $H/X$ or $H/X^c$ is class 1, which implies that $X$ or $X^c$ is a single vertex by the choice of $ \ca H $. Therefore, the edge-cut $\partial_{G^e }  (V( G'))$  is mapped to a trivial edge-cut of $ H $ under $ f $. Since $ G' $ was chosen arbitrarily, we conclude that $G$ also has an $ H $-coloring, which completes the proof.
\end{proof}

%
%

We obtain the main result of this section as a corollary.

\begin{cor}\label{cor:color-simple-r-graphs}
	Let $ r\geq3 $ and let $\ca H_r'$ be a set of connected $r$-graphs such that every connected simple $r$-graph can be colored by an element of $\ca H_r'$.
\begin{itemize}
\item[$i)$] If the Petersen Coloring Conjecture is false, then $\ca H_3'$ is an infinite set.
\item[$ii)$] If $r \geq 4$, then $\ca H_r'$ is an infinite set. 
\end{itemize}	
	 
\end{cor}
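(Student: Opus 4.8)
The plan is to argue by contradiction. Suppose $\ca H_r'$ is a \emph{finite} set of connected $r$-graphs that colors every connected simple $r$-graph; I aim to show that $\ca H_r$ would then be finite, contradicting Corollary~\ref{cor:H_3 = P or infinite}. For $ii)$ that corollary states directly that $\ca H_r$ is infinite when $r \geq 4$. For $i)$, a false Petersen Coloring Conjecture gives $\ca H_3 \neq \{P\}$ (the conjecture asserts exactly $\ca H_3 = \{P\}$), so $\ca H_3$ is infinite by the dichotomy of Corollary~\ref{cor:H_3 = P or infinite}. In both cases a finite upper bound on $\ca H_r$ is the contradiction sought.

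First I would massage $\ca H_r'$ into a finite set $\ca H$ that meets the hypothesis of Theorem~\ref{theo:reduction_simple_case} while still coloring all connected simple $r$-graphs. Given $H \in \ca H_r'$, as long as $H$ has a non-trivial tight edge-cut $\partial_H(X)$ for which $H/X$ or $H/X^c$ is class $1$, I contract the class-$1$ side: if $\chi'(H/X^c)=r$, replace $H$ by $H/X$, and symmetrically in the other case. Two facts make this work. Contracting one side of a tight cut of an $r$-graph again yields a connected $r$-graph, and since the cut is non-trivial the order drops strictly, so the iteration terminates with a graph $H^*$ having no non-trivial tight edge-cut whose contraction is class $1$. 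Moreover, by Lemma~\ref{Lemma-H-noclass1-subgraph}, each contraction step preserves colorability: if the current graph colors an $r$-graph $G$, so does its contraction. Hence, setting $\ca H = \{H^* : H \in \ca H_r'\}$, every simple $r$-graph colored by some $H \in \ca H_r'$ is colored by the corresponding $H^* \in \ca H$. Thus $\ca H$ is finite, colors all connected simple $r$-graphs, and satisfies the tight-cut hypothesis of Theorem~\ref{theo:reduction_simple_case}.

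Next I would apply Theorem~\ref{theo:reduction_simple_case} to $\ca H$ to conclude that $\ca H$ colors \emph{every} connected $r$-graph; that is, $\ca H$ is an $r$-complete set. Finally I invoke the characterization in Theorem~\ref{theo:characterisation H_r}: for each $G \in \ca H_r$ the only connected $r$-graph coloring $G$ is $G$ itself, so the element of $\ca H$ coloring $G$ must equal $G$, giving $\ca H_r \subseteq \ca H$. As $\ca H$ is finite, so is $\ca H_r$, the desired contradiction.

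The steps following the reduction are essentially immediate given the results already proved, so I expect the main obstacle to be the reduction in the second paragraph—specifically, checking that contracting the class-$1$ side of a non-trivial tight cut keeps us within connected $r$-graphs, strictly decreases the order (so the iteration halts at a graph satisfying the tight-cut hypothesis), and preserves every coloring via Lemma~\ref{Lemma-H-noclass1-subgraph}. Once this is in place, Theorem~\ref{theo:reduction_simple_case}, Theorem~\ref{theo:characterisation H_r}, and Corollary~\ref{cor:H_3 = P or infinite} close the argument.
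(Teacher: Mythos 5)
Your proposal is correct and follows essentially the same route as the paper: contract class-$1$ sides of non-trivial tight cuts via Lemma~\ref{Lemma-H-noclass1-subgraph} to obtain a set satisfying the hypothesis of Theorem~\ref{theo:reduction_simple_case}, apply that theorem to get an $r$-complete set, deduce $\ca H_r$ is contained in it via Theorem~\ref{theo:characterisation H_r}, and conclude with Corollary~\ref{cor:H_3 = P or infinite}. The only difference is cosmetic: you phrase it as a contradiction with a finite $\ca H_r'$, while the paper compares cardinalities directly.
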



\begin{proof}
	By Lemma \ref{Lemma-H-noclass1-subgraph} we can contract suitable subsets of vertices of graphs in $\ca H_r'$ to obtain a set $\ca H_r''$ of connected $r$-graphs with the following properties.
\begin{itemize}
\item Every connected simple $r$-graph can be colored by an element of $\ca H_r''$.
\item For every $H \in \ca H''_r$, there is no non-trivial tight edge-cut $\partial_H(X)$ such that $H/X$ or $H/X^c$ is class $1$.
\end{itemize}	
Hence, by Theorem \ref{theo:reduction_simple_case}, every connected $r$-graph can be colored by an element of $\ca H_r''$. Thus, $\ca H_r \subset \ca H_r''$ and hence, $\ca H_r''$ is an infinite set by Corollary~\ref{cor:H_3 = P or infinite}. By the construction of $\ca H_r''$ we have $|\ca H_r'| \geq |\ca H_r''|$, and hence, $\ca H_r'$ is also an infinite set.
\end{proof}

\section{Concluding remarks} \label{Sec: final remarks}

\subsection{Quasi-ordered sets}
Jaeger \cite{Jaeger1980} initiated the study of the Petersen Coloring Conjecture in terms of partial ordered sets. DeVos, Ne\v{s}et\v{r}il and Raspaud \cite{DeVos_etal_2007} studied cycle-continuous mappings and asked whether there is an infinite set $ \ca G $ of  bridgeless graphs such that every two of them are cycle-continuous incomparable, i.e.\ there is no cycle-continuous map between any two graphs in $\ca G$.
\v{S}\'amal \cite{Robert_2017} gave an affirmative answer to the above question by constructing such an infinite set $ \ca G $ of  bridgeless cubic graphs.
In fact, he also mentioned that
this result can be considered in view of a quasi-order induced by cycle-continuous mappings on the set of bridgeless cubic graphs. That is, this quasi-ordered set contains infinite antichains.

For every integer $r \geq 1$, $H$-colorings of $r$-graphs induce a quasi-order on the set of $r$-graphs.
Then, our result on $ r $-graphs can be restated  as follows: for any $ r\geq4 $, there is an infinite set $ \ca H_r $ of  $ r $-graphs such that each of them  is  incomparable to any other $ r $-graph, and such infinite set exists for $ r=3 $ if the Petersen Coloring Conjecture is false.  
In particular, the set $\ca H_r $ is an infinite antichain.

\subsection{Open problems}

The edge connectivity of an $r$-graph is equal to $r$ or it is an even number.  
We have shown that $\ca T(r,r-2) \cup \ca T(r,1) \subseteq \ca H_r$.
Thus, for $r \not =5$, for each possible edge-connectivity $t$ there is 
a $t$-edge-connected $r$-graph in $\ca H_r$. For $r=5$, we do not know any 
$5$-edge-connected $5$-graph with this property, see \cite{MMSW_pdpm}
for a discussion of this topic. However, we know only a finite 
number of $t$-edge-connected $r$-graphs of $\ca H_r$ if $t \geq 3$. 

\begin{prob}
	For $r,t \geq 3$, does $\ca H_r$ contain infinitely many $t$-edge-connected 
	$r$-graphs?
\end{prob}

It is also not clear whether $\ca H_r$ contains elements
of $\ca T(r,k)$ for $k \in \{2, \dots, r-3\}$. 
So far, these sets are not determined for $k \in \{1, \dots, r-3\}$. 
Indeed, we even do not know the order of their elements. 
Let 
$o(r,k)$ be the order of the graphs of $\ca T(r,k)$. 

\begin{prob}\label{o(r,k)}
	For all $r \geq 3$ and $k \in \{1, \dots, r-2 \}\colon$ Determine $o(r,k)$.  
\end{prob}

By our results, $o(r,r-2) = 10$.
By results of Rizzi \cite{rizzi1999indecomposable}, $o(r,1) \leq 2 \times 5^{r-2}$. We conjecture the following to be true.

\begin{con} \label{conj: order}
	For all $r \geq 3$ and $k \in \{2, \dots, r-2\} \colon o(r,k-1) \geq o(r,k)$.
\end{con}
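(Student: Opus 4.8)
The plan is to reduce the inequality to a single construction step. For $k \in \{2,\dots,r-2\}$ it suffices to exhibit one $r$-graph $G'$ with $\pi(G')=k$ and $|V(G')| \le o(r,k-1)$, because then $o(r,k) \le |V(G')| \le o(r,k-1)$. To produce such a $G'$ I would start from a smallest $r$-graph $G$ with $\pi(G)=k-1$, so that $|V(G)|=o(r,k-1)$, and try to raise its number of pairwise disjoint perfect matchings by exactly one \emph{without adding vertices}. As a sanity check, the top case is automatic: as noted above $o(r,r-2)=10$, while any $r$-graph witnessing $\pi=r-3$ is class $2$ (since $r-3<r$) and hence has order at least $10$; this already gives $o(r,r-3)\ge o(r,r-2)$. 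The content of the conjecture lies in the lower indices, where both sides are a priori unknown.

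First I would fix $k-1$ pairwise disjoint perfect matchings $M_1,\dots,M_{k-1}$ of $G$ and consider $H=G-\bigcup_{i=1}^{k-1}M_i$, an $(r-k+1)$-regular graph. Since $\pi(G)=k-1$, the graph $H$ has no perfect matching; otherwise that matching would extend the family to $k$ pairwise disjoint ones. The obstruction is described by a Tutte--Berge barrier $S\subseteq V(H)$ for which $H-S$ has more odd components than $|S|$, and since $|V(H)|$ is even the associated deficiency is a positive even number. The idea is then to perform a local modification of $G$ — a $2$-switch replacing two independent edges $ab,cd$ by $ac,bd$, or one of the lifting and edge-replacement operations of Section~\ref{Sec: H-coloring} — chosen so that it fixes $M_1,\dots,M_{k-1}$, keeps the graph $r$-regular on the same vertex set, eliminates the barrier so that the modified $H$ acquires one perfect matching $M_k$, and creates no further disjoint matching.

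One point that needs care, but which I expect to be manageable, is preserving the $r$-graph property: after the modification every odd edge-cut should still contain at least $r$ edges. Here I would invoke the tight-cut and lifting machinery already developed (in particular Theorem~\ref{theo:r-graph_lifting}), arguing first that a smallest witness $G$ has no non-trivial tight edge-cut by the usual contraction argument (if $\partial_G(X)$ were tight and non-trivial, then $G/X$ and $G/X^c$ would be smaller $r$-graphs whose matching structures recombine to contradict either the $r$-graph property or the minimality of $|V(G)|$). Consequently a generic switch keeps all odd cuts strictly above $r$, and should some cut become tight one contracts across it and proceeds by induction on the order.

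The main obstacle is the \emph{exact} control of $\pi$. I must guarantee that the modification raises the number of pairwise disjoint perfect matchings from $k-1$ to precisely $k$: neither failing to create a new matching, nor accidentally unlocking several at once, which could push the graph all the way to class $1$ (i.e.\ $\pi=r$). Ruling out such overshooting appears to require a quantitative understanding of how the Tutte--Berge deficiency of $H$ changes under a single switch, combined with a minimality argument showing that a smallest $(k-1)$-witness lies ``just one controlled switch away'' from a $k$-witness of no larger order. Making this controlled-increment step rigorous — so that the deficiency drops by exactly the right amount while the $r$-graph property is retained — is, in my view, the crux of the conjecture.
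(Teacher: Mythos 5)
This statement is labelled a \emph{Conjecture} in the paper (Conjecture~\ref{conj: order}); the authors give no proof of it, and indeed they explicitly point out that even the quantities $o(r,k)$ for $k\in\{1,\dots,r-3\}$ are unknown (Problem~\ref{o(r,k)}). So there is no paper proof to compare against, and your proposal should be judged as an attempted proof of an open problem. Your overall reduction is sound: it suffices to exhibit, for each $k$, an $r$-graph $G'$ with $\pi(G')=k$ and $|V(G')|\le o(r,k-1)$, and your sanity check for the top case is correct ($o(r,r-3)\ge 10=o(r,r-2)$ because any graph with $\pi=r-3\le r-2$ is class~$2$ and hence has order at least $10$ by Theorem~\ref{Theorem-cong-P+M_V2}). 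But the core step --- modifying a smallest $(k-1)$-witness on the same vertex set so that $\pi$ increases by exactly one --- is not established, and you acknowledge as much. There is no argument that such a modification exists at all: a Tutte--Berge barrier for $H=G-\bigcup_i M_i$ certifies that this particular family $M_1,\dots,M_{k-1}$ does not extend, but $\pi(G)=k-1$ is a statement quantified over all families, and nothing shows that some $2$-switch or lifting simultaneously repairs a barrier for one family without being undone by the freedom to rechoose the family, nor that the resulting graph has $\pi$ exactly $k$ rather than larger or unchanged. As written, the ``controlled increment'' is an assertion, not a lemma, and it is precisely the content of the conjecture.

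Two further specific points would need repair even within your framework. First, the claim that a smallest witness for $\pi=k-1$ has no non-trivial tight edge-cut does not follow from ``the usual contraction argument'': Claim~1 of Theorem~\ref{Theorem-cong-P+M_V2} uses that class~$1$-ness of $G/X$ and $G/X^c$ recombines to class~$1$-ness of $G$, which works because one can permute $r$ disjoint perfect matchings on each side to agree on the $r$ cut edges; for an arbitrary target value $k$ of $\pi$, gluing $k$ disjoint perfect matchings of $G/X$ with $k$ of $G/X^c$ requires them to use the \emph{same} $k$ edges of the tight cut, which is not automatic, so minimality does not obviously exclude non-trivial tight cuts. Second, a $2$-switch can decrease an odd cut by $2$, so ``a generic switch keeps all odd cuts strictly above $r$'' needs the previous point to hold in the strong form $|\partial_G(X)|\ge r+2$ for all non-trivial odd $X$, which is exactly what is unproven. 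In short, the proposal is a reasonable plan of attack but does not close the conjecture; the decisive step is missing.
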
 

If Conjecture \ref{conj: order} would be true, then it would follow
with Corollary~\ref{cor:coloring_graphs_in_S(r,k)} that $\ca T(r,k) \subset \ca H_r$ 
for each $k \in \{1, \dots, r-2\}$. 

Similar problems arise for simple $r$-graphs. Let $o_s(r,k)$ be the smallest
order of a simple $r$-graph $G$ with $\pi(G)=k$.
Small simple $r$-graphs of class 2 can be obtained as follows. Consider a perfect matching $M$ of $P$ and the graph $G=P+(r-3)M$. Let $H$ be a simple $r$-graph of smallest order and $v \in V(H)$. Then, $H$ is class 1 and $|V(H)|=r+1$ if $r$ is odd and $|V(H)|=r+2$ if $r$ is even. Now, replace appropriately five vertices of $G$ by $H-v$ to obtain a simple $r$-graph $G'$. Since $H$ is class 1 and $\pi(G)=r-2$, we have $\pi (G')=r-2$.   Therefore, if $r$ is odd, then $o_s(r,r-2) \leq 5(r+1)$ and
if $r$ is even, then $o_s(r,r-2) \leq 5(r+2)$. Furthermore, bounds for $o_s(r,k)$ can be obtained by using Meredith extensions, since if $G'$ is a Meredith extension of an $r$-graph $G$, then $\pi(G')=\pi(G)$.



\bibliography{Lit_reg_graphs}{}
\addcontentsline{toc}{section}{References}
\bibliographystyle{abbrv}

\end{document}